\title[Virtual Artin groups]{Virtual Artin groups}
\author[P Bellingeri]{Paolo Bellingeri}
\address{Paolo Bellingeri, Normandie Univ, UNICAEN, CNRS, LMNO, 14000 Caen, France}
\email{paolo.bellingeri@unicaen.fr}
\author[L Paris]{Luis Paris}
\address{Luis Paris, IMB, UMR 5584, CNRS, Universit\'e Bourgogne Franche-Comt\'e, 21000 Dijon, France}
\email{lparis@u-bourgogne.fr}
\author[A-L Thiel]{Anne-Laure Thiel}
\address{Anne-Laure Thiel, Normandie Univ, UNICAEN, CNRS, LMNO, 14000 Caen, France}
\email{anne-laure.thiel@unicaen.fr}
\newtheorem{thm}{Theorem}[section]
\newtheorem{lem}[thm]{Lemma}
\newtheorem{prop}[thm]{Proposition}
\newtheorem{corl}[thm]{Corollary}
\newtheorem{conj}[thm]{Conjecture}
\theoremstyle{definition}
\newtheorem*{defin}{Definition}
\newtheorem*{rem}{Remark}
\newtheorem*{expl}{Example}
\newtheorem*{expl1}{Example 1}
\newtheorem*{expl2}{Example 2}
\newtheorem*{acknow}{Acknowledgments}
\numberwithin{equation}{section}
\renewcommand{\thefigure}{\ifnum \c@section>\z@ \thesection.\fi
 \@arabic\c@figure}
\begin{document}

\def\VB{{\rm VB}} \def\N{\mathbb N} \def\Prod{{\rm Prod}}
\def\SS{\mathcal S} \def\TT{\mathcal T} \def\VA{{\rm VA}}
\def\KVA{{\rm KVA}} \def\PVA{{\rm PVA}} \def\PVB{{\rm PVB}}
\def\KVB{{\rm KVB}} \def\PP{\mathcal P} \def\fin{{\rm fin}}
\def\Ker{{\rm Ker}} \def\id{{\rm id}} \def\R{\mathbb R}
\def\GL{{\rm GL}} \def\Z{\mathbb Z} \def\XX{\mathcal X}
\def\YY{\mathcal Y} \def\sph{{\rm sph}} \def\rank{{\rm rank}}
\def\K{\mathbb K} \def\MM{\mathcal M} \def\cd{{\rm cd}}
\def\vcd{{\rm vcd}}

%%%%%%%%%%

\begin{abstract}
Starting from the observation that the standard presentation of a virtual braid group mixes the standard presentation of the corresponding braid group with the standard presentation of the corresponding symmetric group and some mixed relations that mimic the action of the symmetric group on its root system, we define a virtual Artin group $\VA[\Gamma]$ of a Coxeter graph $\Gamma$ mixing the standard presentation of the Artin group $A[\Gamma]$ with the standard presentation of the Coxeter group $W[\Gamma]$ and some mixed relations that mimic the action of $W[\Gamma]$ on its root system.
By definition we have two epimorphisms $\pi_K:\VA[\Gamma]\to W[\Gamma]$ and $\pi_P:\VA[\Gamma]\to W[\Gamma]$ whose kernels  are denoted by $\KVA[\Gamma]$ and $\PVA[\Gamma]$ respectively. 
We calculate presentations for these two subgroups.
In particular $\KVA[\Gamma]$ is an Artin group.
We prove that the center of any virtual Artin group is trivial.
In the case where $\Gamma$ is of spherical type or of affine type, we show that each free of infinity parabolic subgroup of $\KVA[\Gamma]$ is also of spherical type or of affine type, and we show that $\VA[\Gamma]$ has a solution to the word problem. 
In the case where $\Gamma$ is of spherical type we show that $\KVA[\Gamma]$ satisfies the $K(\pi,1)$ conjecture and we infer the cohomological dimension of $\KVA[\Gamma]$ and the virtual cohomological dimension of $\VA[\Gamma]$.
In the case where $\Gamma$ is of affine type we determine upper bounds for the cohomological dimension of $\KVA[\Gamma]$ and for the virtual cohomological dimension of $\VA[\Gamma]$.
\end{abstract}

\maketitle

%%%%%%%%%

\section{Introduction}\label{sec1}

Virtual braid groups were introduced by Kauffman \cite{Kauff1} in his founding paper on the theory of virtual knots and links. 
They play the same role for virtual links as the braid groups for classical links.
We know by Kamada \cite{Kamad1} and Vershinin \cite{Versh1} that the virtual braid group on $n$ strands, denoted $\VB_n$, admits a presentation with generators $\sigma_1,\dots,\sigma_{n-1},\tau_1,\dots,\tau_{n-1}$, and relations:
\begin{itemize}
\item[(v1)]
$\sigma_i\sigma_j=\sigma_j\sigma_i$ for $|i-j|\ge2$, \ $\sigma_i\sigma_j\sigma_i=\sigma_j\sigma_i\sigma_j$ for $|i-j|=1$,
\item[(v2)]
$\tau_i\tau_j=\tau_j\tau_i$ for $|i-j|\ge2$, \ $\tau_i\tau_j\tau_i=\tau_j\tau_i\tau_j$ for $|i-j|=1$, \ $\tau_i^2=1$ for $1\le i\le n-1$,
\item[(v3)]
$\tau_j\sigma_i=\sigma_i\tau_j$ for $|i-j|\ge2$, \ $\tau_i\tau_j\sigma_i=\sigma_j\tau_i\tau_j$ for $|i-j|=1$.
\end{itemize}
The relations (v1) are the relations of Artin's presentation for the braid group $B_n$, and they determine an embedding of $B_n$ into $\VB_n$. 
The relations (v2) are the relations of the Coxeter type presentation of the symmetric group $S_n$, and they determine an embedding of $S_n$ into $\VB_n$.
As for the relations (v3), called \emph{mixed relations}, they mimic the action of the symmetric group on its associated root system.
This situation extends naturally to all Artin groups and Coxeter groups, which explains the forthcoming definition of virtual Artin group.

A \emph{Coxeter matrix} on a countable set $S$ is a symmetric square matrix $M=(m_{s,t})_{s,t\in S}$, indexed by the elements of $S$, with entries in $\N\cup\{\infty\}$, satisfying $m_{s,s}=1$ for all $s\in S$ and $m_{s,t}=m_{t,s}\ge2$ for all $s,t\in S$, $s\neq t$.
Such a matrix is usually represented by a labeled graph $\Gamma$, called \emph{Coxeter graph}, defined as follows.
The set $S$ is the set of vertices of $\Gamma$.
Two vertices $s,t$ are joined by an edge if $m_{s,t}\ge3$, and this edge is labeled with $m_{s,t}$ if $m_{s,t}\ge 4$.
We say that $\Gamma$ (or $M$) is \emph{finite} if $S$ is finite.
In most of the studies on Coxeter groups and/or Artin groups the Coxeter graph is assumed to be finite.
Here we will not assume that $\Gamma$ is necessarily finite since, from a given Coxeter graph $\Gamma$, we will construct another Coxeter graph $\hat\Gamma$ which is not finite in general, even when $\Gamma$ is.
However, we will always assume the set of vertices to be countable.

If $a,b$ are two letters and $m$ is an integer $\ge2$, then we denote by $\Prod_L(a,b,m)$ the word $aba\cdots$ of length $m$ and by $\Prod_R(b,a,m)$ the word $\cdots aba$ of length $m$.
If $a,b$ are two elements of a group $G$, then we denote again by $\Prod_L(a,b,m)$ (resp. $\Prod_R(b,a,m)$) the element of $G$ represented by $\Prod_L(a,b,m)$ (resp. $\Prod_R(b,a,m)$).
Let $\Gamma$ be a Coxeter graph and let $M=(m_{s,t})_{s,t\in S}$ be its Coxeter matrix. 
The \emph{Artin group} associated with $\Gamma$, denoted $A[\Gamma]$, is the group defined by the following presentation: 
\[
A[\Gamma]=\langle S\mid \Prod_R(t,s,m_{s,t}) = \Prod_R(s,t,m_{s,t})\text{ for }s,t\in S\,,\ s\neq t\,,\ m_{s,t}\neq\infty\rangle\,.
\]
The \emph{Coxeter group} associated with $\Gamma$, denoted $W[\Gamma]$, is the quotient of $A[\Gamma]$ by the relations $s^2=1$, $s\in S$.

\begin{defin}\label{defVA}
Let $\Gamma$ be a Coxeter graph and let $M=(m_{s,t})_{s,t\in S}$ be its Coxeter matrix. 
Let $\SS=\{\sigma_s\mid s\in S\}$ and $\TT=\{\tau_s\mid s\in S\}$ be two abstract sets in one-to-one correspondence with $S$.
The \emph{virtual Artin group} associated with $\Gamma$, denoted $\VA[\Gamma]$, is the group defined by the presentation with generating set $\SS\sqcup\TT$ and relations:
\begin{itemize}
\item[(v1)]
$\Prod_R(\sigma_t,\sigma_s,m_{s,t})=\Prod_R(\sigma_s,\sigma_t,m_{s,t})$ for $s,t\in S$, $s\neq t$ and $m_{s,t}\neq\infty$;
\item[(v2)]
$\Prod_R(\tau_t,\tau_s,m_{s,t})=\Prod_R(\tau_s,\tau_t,m_{s,t})$ for $s,t\in S$, $s\neq t$ and $m_{s,t}\neq\infty$; $\tau_s^2=1$ for $s\in S$;
\item[(v3)]
$\Prod_R(\tau_s,\tau_t,m_{s,t}-1)\,\sigma_s=\sigma_r\,\Prod_R(\tau_s,\tau_t,m_{s,t}-1)$, where $r=s$ if $m_{s,t}$ is even and $r=t$ if $m_{s,t}$ is odd, for $s,t\in S$, $s\neq t$ and $m_{s,t}\neq\infty$.
\end{itemize}
\end{defin}

Like for virtual braid groups, the relations (v1) are the relations that define the Artin group $A[\Gamma]$, and we have a natural homomorphism $\iota_A:A[\Gamma]\to\VA[\Gamma]$ which sends $s$ to $\sigma_s$ for all $s\in S$.
On the other hand, the relations (v2) are the relations that define the Coxeter group $W[\Gamma]$, and we have a natural homomorphism $\iota_W:W[\Gamma]\to\VA[\Gamma]$ which sends $s$ to $\tau_s$ for all $s\in S$.
We will see in Section \ref{sec2} that these two homomorphisms are injective. 
Finally, the relations (v3) mimic the action of $W[\Gamma]$ on its root system (see Lemma \ref{lem2_2}). 

Artin groups were introduced in the early 1970s by Brieskorn \cite{Bries1}, Brieskorn--Saito \cite{BriSai1} and Deligne \cite{Delig1} as natural generalizations of braid groups. 
This generalization goes beyond the presentations themselves since a large part of the theory of braid groups extends to some Artin groups, especially those of spherical type, and a large part of the study of braid groups is now made in this more general framework.
We expect that the same phenomenon will hold for virtual Artin groups viewed as generalizations of virtual braid groups.

Notice that the third author \cite{Thiel1}  proposed a notion of virtual braid group of type $B_n$ by extending   the diagrammatic description provided by tom Dieck for  Artin groups of type $B$
and which turns out to be  a quotient of  the virtual Artin group of type $B_n$ defined above.

The virtual braid group on $n$ strands $\VB_n$ admits two surjective homomorphisms onto the symmetric group $S_n$, $\pi_K:\VB_n\to S_n$ and $\pi_P:\VB_n\to S_n$.
Actually, when $n\ge 5$, by Bellingeri--Paris \cite{BelPar1}, these are the only surjective homomorphisms from $\VB_n$ onto $S_n$ up to automorphism of $S_n$.
The kernel of $\pi_P$ is called the \emph{pure virtual braid group} and it is denoted by $\PVB_n$, and the kernel of $\pi_K$ is called the \emph{kure virtual braid group} and it is denoted by $\KVB_n$.
The homomorphisms $\pi_K$ and $\pi_P$ admit a common section $\iota_n:S_n\to \VB_n$, hence we have the semi-direct product decompositions $\VB_n=\KVB_n\rtimes S_n$ and $\VB_n=\PVB_n\rtimes S_n$.
A presentation for $\PVB_n$ was calculated by Bardakov \cite{Barda1}.
Furthermore, $\PVB_n$ coincides with a group studied by Bartholdi--Enriquez--Etingof--Rains \cite{BaEnEtRa1} in relation with the Yang--Baxter equations.
By Bartholdi--Enriquez--Etingof--Rains \cite{BaEnEtRa1}, $\PVB_n$ admits a finite classifying space, that is, $\PVB_n$ is of type F, which implies also that it is torsion free.
On the other hand, the group $\KVB_n$ is an Artin group (see Rabenda \cite{Raben1} and Bardakov--Bellingeri \cite{BarBel1}) and it is quite well-understood thanks to Godelle--Paris \cite{GodPar2}.
In particular $\KVB_n$ is also of type F, therefore it is torsion free and it has a solution to the word problem.
Since $\KVB_n$ is of finite index in $\VB_n$, we can therefore use $\KVB_n$ to solve the word problem in $\VB_n$ (see Bellingeri--Cisneros-de-la-Cruz--Paris \cite{BeCiPa1}).

Let $\Gamma$ be a Coxeter graph.
Like for virtual braid groups, we have two surjective homomorphisms $\pi_P:\VA[\Gamma]\to W[\Gamma]$ and $\pi_K:\VA[\Gamma]\to W[\Gamma]$ that admit a common section $\iota_W:W[\Gamma]\to\VA[\Gamma]$.
Let $\PVA[\Gamma]$ denote the kernel of $\pi_P$ and $\KVA[\Gamma]$ denote the kernel of $\pi_K$.
Then we have semi-direct product decompositions $\VA[\Gamma]=\PVA[\Gamma]\rtimes W[\Gamma]$ and $\VA[\Gamma]=\KVA[\Gamma]\rtimes W[\Gamma]$.
In a first step we compute presentations for $\KVA[\Gamma]$ and $\PVA[\Gamma]$ (see Theorem \ref{thm2_3} and Theorem \ref{thm2_6}).
These presentations are generalizations of the presentations of $\PVB_n$ and $\KVB_n$ found by Bardakov \cite{Barda1}, Rabenda \cite{Raben1} and Bardakov--Bellingeri \cite{BarBel1}, and provide a new insight into these groups. 
For example $\KVA[\Gamma]$ as well as $\PVA[\Gamma]$ has a generating set naturally in one-to-one correspondence with the root system of $W[\Gamma]$, and the action of $W[\Gamma]$ on $\KVA[\Gamma]$ and/or $\PVA[\Gamma]$ is given by the action of $W[\Gamma]$ on its root system. 
As a first application we show that the natural homomorphism $\iota_A: A[\Gamma]\to\VA[\Gamma]$ which sends $s$ to $\sigma_s$ for all $s\in S$ is injective (see Corollary \ref{corl2_4}).
Note that the homomorphism $\iota_W:W[\Gamma]\to\VA[\Gamma]$ is necessarily injective since it is a section of both $\pi_P$ and $\pi_K$ (see Proposition~\ref{prop2_1}).

From Theorem \ref{thm2_3} it follows that $\KVA[\Gamma]$ is an Artin group whose Coxeter graph, denoted $\hat\Gamma$, is determined by the root system of $W[\Gamma]$.
We use this in the rest of the paper. 
In Section \ref{sec3} we prove that the center of $\VA[\Gamma]$ is trivial whatever $\Gamma$ is (see Corollary \ref{corl3_4}).
This result is rather surprising since determining the center of all Artin groups is an open question.
In Sections \ref{sec4}, \ref{sec5} and \ref{sec6} we assume that the Coxeter graph $\Gamma$ is of spherical type or of affine type. 
There is a general principle which says that, if one understands the free of infinity parabolic subgroups of a given Artin group, then we can understand the Artin group itself (see Godelle--Paris \cite{GodPar1}).
The aim of Section \ref{sec4} is to show that we can apply this principle to $\KVA[\Gamma]$ when $\Gamma$ is of spherical type or of affine type (see Theorem \ref{thm4_1}). 
In particular we show that each free of infinity parabolic subgroup of $\KVA[\Gamma]$ is of spherical type or of affine type and its spherical dimension is bounded above by the spherical dimension of $A[\Gamma]$.
The definition of ``spherical dimension'' of an Artin group is given in the beginning of Section \ref{sec4}.  
In the good cases, that include Artin groups of spherical type and of affine type, it coincides with the cohomological dimension of the group. 
In Section \ref{sec5} we show that, if $\Gamma$ is a Coxeter graph of spherical type or of affine type, then $\VA[\Gamma]$ has a solution to the word problem (see Theorem \ref{thm5_1}).
Section \ref{sec6} concerns some (co)homological properties of $\VA[\Gamma]$ and $\KVA[\Gamma]$ when $\Gamma$ is of spherical type or of affine type. 
We show that, if $\Gamma$ is of spherical type, then $\KVA[\Gamma]$ satisfies the so-called $K(\pi,1)$ conjecture (see Theorem \ref{thm6_3}), hence it is of type F, its cohomological dimension is finite, and it is torsion free (see Corollary \ref{corl6_5}).
It also follows that the virtual cohomological dimension of $\VA[\Gamma]$ is equal to the number of vertices of $\Gamma$.
For the case where $\Gamma$ is of affine type we show that the cohomological dimension of $\KVA[\Gamma]$ and the virtual cohomological dimension of $\VA[\Gamma]$ are bounded above (see Theorem \ref{thm6_6}).

In order to avoid possible misunderstandings let us finish this section with a caveat. The term "virtual" has different meanings in this text:  a virtual Artin group is called in such way because
it is a natural extension of the virtual braid group $\VB_n$, while "virtual cohomogical dimension" and "virtually torsion free" are referred to combinatorial properties of finite index subgroups
of virtual Artin groups.  However, when $\Gamma$ is of spherical type, the fact that   $\KVA[\Gamma]$ is an Artin group and that  $\VA[\Gamma]=\KVA[\Gamma]\rtimes W[\Gamma]$
can be rephrased saying  that the  virtual Artin group  $\VA[\Gamma]$  is  virtually an Artin group.

\begin{acknow}
We thank Conchita Mart\'inez-P\'erez for all the information and references on cohomological dimension of groups that she gave us.
Her help was important for achieving the results of Section \ref{sec6}.
The first and second authors are supported by the French project ``AlMaRe'' (ANR-19-CE40-0001-01) of the ANR. The first  and third authors were also supported by the project ARTIQ (ERDF/RIN). 
\end{acknow}

%%%%%%%%%%

\section{The groups $\KVA[\Gamma]$ and $\PVA[\Gamma]$}\label{sec2}

Let $\Gamma$ be a Coxeter graph and let $M=(m_{s,t})_{s,t\in S}$ be its Coxeter matrix.
Recall that $\Gamma$ is not assumed to be finite in the sense that the set of vertices of $\Gamma$ can be a countable infinite set.
For $X\subset S$ we set $M_X=(m_{s,t})_{s,t\in X}$ and we denote by $\Gamma_X$ the Coxeter graph of $M_X$.
Note that $\Gamma_X$ is the full Coxeter subgraph of $\Gamma$ spanned by $X$.
We denote by $\PP_\fin(S)$ the set of finite subsets of $S$.
If $X,Y\in\PP_\fin(S)$ are such that $X\subset Y$, then, by Van der Lek \cite[Lemma 4.11]{Lek1} and Bourbaki \cite[Corollaire 1, Page 19]{Bourb1}, the homomorphisms $A[\Gamma_X]\to A[\Gamma_Y]$ and $W[\Gamma_X]\to W[\Gamma_Y]$ induced by the inclusion are injective.
It follows that $A[\Gamma]$ and $W[\Gamma]$ are the following direct limits: 
\[
A[\Gamma]=\varinjlim_{X\in\PP_\fin(S)}A[\Gamma_X]\text{ and }W[\Gamma]=\varinjlim_{X\in\PP_\fin(S)}W[\Gamma_X]\,.
\]
So, most of the results on Artin and Coxeter groups that we will use in this paper, although proved within the framework of finite Coxeter graphs, extend immediately to Coxeter graphs with countable many vertices.
We will point out when and where this is not the case.

We see in the presentations of $\VA[\Gamma]$ and $W[\Gamma]$ that there are surjective homomorphisms $\pi_K:\VA[\Gamma]\to W[\Gamma]$ and $\pi_P:\VA[\Gamma]\to W[\Gamma]$ defined by:
\begin{gather*}
\pi_K(\sigma_s)=1\text{ and }\pi_K(\tau_s)=s\text{ for all }s\in S\,,\\
\pi_P(\sigma_s)=\pi_P(\tau_s)=s\text{ for all }s\in S\,.
\end{gather*}
We set $\KVA[\Gamma]=\Ker(\pi_K)$ and $\PVA[\Gamma]=\Ker(\pi_P)$.
Recall the homomorphism $\iota_W:W[\Gamma]\to\VA[\Gamma]$ which sends $s$ to $\tau_s$ for all $s\in S$.
Observe that $\pi_K\circ\iota_W=\id$ and $\pi_P\circ\iota_W=\id$, hence $\iota_W$ is a section of both epimorphisms.
So:

\begin{prop}\label{prop2_1}
Let $\Gamma$ be a Coxeter graph.
The homomorphism $\iota_W:W[\Gamma]\to\VA[\Gamma]$ is injective and we have the following semi-direct product decompositions:
\[
\VA[\Gamma]=\KVA[\Gamma]\rtimes W[\Gamma]\text{ and }\VA[\Gamma]=\PVA[\Gamma]\rtimes W[\Gamma]\,.
\]
\end{prop}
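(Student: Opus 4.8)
The plan is to derive the whole statement from the elementary fact that a group epimorphism admitting a section splits as an internal semidirect product. The only input needed has already been recorded in the excerpt: $\pi_K\circ\iota_W=\id$ and $\pi_P\circ\iota_W=\id$, so $\iota_W$ is a common section of the two epimorphisms $\pi_K,\pi_P:\VA[\Gamma]\to W[\Gamma]$.

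First I would record that $\iota_W$ is injective: since $\pi_K\circ\iota_W=\id$, the map $\iota_W$ has a left inverse, hence is injective. I would then identify $W[\Gamma]$ with its image $\iota_W(W[\Gamma])\le\VA[\Gamma]$ and verify the two conditions defining an internal semidirect product relative to the normal subgroup $\KVA[\Gamma]=\Ker(\pi_K)$. For triviality of the intersection: if $g\in\KVA[\Gamma]\cap\iota_W(W[\Gamma])$, write $g=\iota_W(w)$; then $1=\pi_K(g)=\pi_K(\iota_W(w))=w$, so $g=\iota_W(1)=1$. For the product exhausting $\VA[\Gamma]$: given $g\in\VA[\Gamma]$, set $w=\pi_K(g)$ and $h=g\,\iota_W(w)^{-1}$; then $\pi_K(h)=\pi_K(g)\,\pi_K(\iota_W(w))^{-1}=w\,w^{-1}=1$, so $h\in\KVA[\Gamma]$ and $g=h\,\iota_W(w)$. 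Since $\KVA[\Gamma]$ is normal by construction as a kernel, this yields $\VA[\Gamma]=\KVA[\Gamma]\rtimes W[\Gamma]$. Replacing $\pi_K$ by $\pi_P$ throughout gives, verbatim, $\VA[\Gamma]=\PVA[\Gamma]\rtimes W[\Gamma]$.

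I do not expect any genuine obstacle here: the statement is a purely formal consequence of the existence of the common section, and the one substantive point — that $\pi_K$ and $\pi_P$ are well defined and surjective — has already been dealt with by comparing the defining presentations of $\VA[\Gamma]$ and $W[\Gamma]$. The only place deserving a word of care is the observation that $\KVA[\Gamma]$ and $\PVA[\Gamma]$ are normal (being kernels), which is precisely what makes the two decompositions semidirect products rather than mere set-theoretic factorizations.
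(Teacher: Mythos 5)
Your proposal is correct and follows exactly the paper's route: the paper derives Proposition \ref{prop2_1} immediately from the observation that $\pi_K\circ\iota_W=\id$ and $\pi_P\circ\iota_W=\id$, leaving the standard section-gives-splitting argument implicit, which is precisely what you spelled out. Nothing is missing; you have merely made explicit the routine verification the paper omits.
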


Let $\Gamma$ be a Coxeter graph and let $M=(m_{s,t})_{s,t\in S}$ be its Coxeter matrix.
Let $\Pi=\{\alpha_s\mid s\in S\}$ be an abstract set in one-to-one correspondence with $S$.
The elements of $\Pi$ are called \emph{simple roots}.
Let $V$ be the real vector space having $\Pi$ as a basis and let $\langle\cdot,\cdot\rangle:V\times V\to\R$ be the symmetric bilinear form, called the \emph{canonical bilinear form}, defined by:
\[
\langle \alpha_s,\alpha_t\rangle=\left\{\begin{array}{ll}
-2\cos(\pi/m_{s,t})&\text{if }m_{s,t}\neq\infty\,,\\
-2&\text{if }m_{s,t}=\infty\,.
\end{array}\right.
\]
There is a faithful linear representation $W[\Gamma]\hookrightarrow\GL(V)$, called the \emph{canonical linear representation} of $W[\Gamma]$, preserving the bilinear form $\langle\cdot,\cdot\rangle$, and defined by:
\[
s(v)=v-\langle v,\alpha_s\rangle\alpha_s\,,\quad \text{for }v\in V\,,\ s\in S\,.
\]
We will assume $W[\Gamma]$ to be embedded into $\GL(V)$ via this linear representation.
We set $\Phi[\Gamma]=\{w(\alpha_s)\mid s\in S\text{ and }w\in W[\Gamma]\}$, and we denote by $\Phi^+[\Gamma]$ (resp. $\Phi^-[\Gamma]$) the set of elements $\beta\in\Phi[\Gamma]$ that can be written $\beta=\sum_{s\in S}\lambda_s\alpha_s$ with $\lambda_s\ge0$ (resp. $\lambda_s\le0$) for all $s\in S$.
By Deodhar \cite{Deodh1} we have the disjoint union $\Phi[\Gamma]=\Phi^+[\Gamma]\sqcup\Phi^-[\Gamma]$ and $\Phi^-[\Gamma]=\{-\beta\mid\beta\in\Phi^+[\Gamma]\}$.
The set $\Phi[\Gamma]$ is called the \emph{root system} of $\Gamma$ and the elements of $\Phi^+[\Gamma]$ (resp. $\Phi^-[\Gamma]$) are called \emph{positive roots} (resp. \emph{negative roots}).

For each $\beta\in\Phi[\Gamma]$ we denote by $r_\beta:V\to V$ the linear reflection defined by $r_\beta(v)=v-\langle v,\beta\rangle\beta$.
Note that, if $s\in S$ and $w\in W[\Gamma]$ are such that $\beta=w(\alpha_s)$, then $r_\beta=wsw^{-1}$.
So $r_\beta\in W[\Gamma]$ for all $\beta\in\Phi[\Gamma]$ and $r_{\alpha_s}=s$ for all $s\in S$.
Note also that $r_\beta=r_{-\beta}$ for all $\beta\in\Phi[\Gamma]$.

We denote by $\cdot$ the action of $W[\Gamma]$ on $\KVA[\Gamma]$ (resp. $\PVA[\Gamma]$).
In other words, for $w\in W[\Gamma]$ and $g\in\KVA[\Gamma]$ (resp. $g\in\PVA[\Gamma]$), we set $w\cdot g=\iota_W(w)\,g\,\iota_W(w)^{-1}$.
The following lemma gives its real meaning to the remark ``the relations (v3) mimic the action of $W[\Gamma]$ on the root system of $W[\Gamma]$'' made in the introduction.

\begin{lem}\label{lem2_2}
Let $\Gamma$ be a Coxeter graph and let $M=(m_{s,t})_{s,t\in S}$ be its Coxeter matrix.
Let $u,v\in W[\Gamma]$ and $s,t\in S$.
If $u(\alpha_s)=v(\alpha_t)$, then $u\cdot\sigma_s=v\cdot\sigma_t$ (in $\KVA[\Gamma]$) and $u\cdot(\tau_s\sigma_s)=v\cdot(\tau_t\sigma_t)$ (in $\PVA[\Gamma]$).
\end{lem}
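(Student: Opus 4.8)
The plan is to reduce the statement to a manageable collection of ``local'' relations indexed by pairs $s,t\in S$ with $m_{s,t}\neq\infty$, and then to exploit the standard combinatorics of reduced words in Coxeter groups. First I would observe that it suffices to treat the case $u=1$, i.e.\ to show that if $v(\alpha_t)=\alpha_s$ for some $v\in W[\Gamma]$, then $\sigma_s=v\cdot\sigma_t$ in $\KVA[\Gamma]$ and $\tau_s\sigma_s=v\cdot(\tau_t\sigma_t)$ in $\PVA[\Gamma]$; the general case follows by applying this to $u^{-1}v$ in place of $v$ and then acting by $u$. Indeed if $u(\alpha_s)=v(\alpha_t)$ then $(u^{-1}v)(\alpha_t)=\alpha_s$, and conjugating the resulting identity by $\iota_W(u)$ gives what we want. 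Note also that $v(\alpha_t)=\alpha_s$ forces $vsv^{-1}$ to fix $\alpha_s$ appropriately; in particular $vtv^{-1}=r_{\alpha_s}=s$ in $W[\Gamma]$, so the elements $v\cdot\tau_t=\iota_W(v)\tau_t\iota_W(v)^{-1}$ and $\tau_s$ represent the same element of $W[\Gamma]$ under $\pi_K$, which is a useful consistency check but not yet the claim inside $\VA[\Gamma]$.

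The core reduction is to the length-one move. Write $v=v_k\cdots v_1$ as a word in the generators $S$ and induct on $k$. If one shows the single step, namely
\[
r\in S,\ r(\alpha_t)=\alpha_{t'}\in\Pi\ \Longrightarrow\ r\cdot\sigma_{t'}=\sigma_t\ \text{in }\KVA[\Gamma]\ \text{ and }\ r\cdot(\tau_{t'}\sigma_{t'})=\tau_t\sigma_t\ \text{in }\PVA[\Gamma]\,,
\]
then telescoping along a word for $v$ yields the general identity, provided each intermediate element $v_i\cdots v_1(\alpha_t)$ is again a simple root. This last caveat is the reason one cannot simply pick an arbitrary word for $v$; instead I would invoke the classical fact (Bourbaki, or Deodhar's description of $\Phi[\Gamma]$) that if $w(\alpha_t)=\alpha_s$ with both simple, then there is an expression $w=v_k\cdots v_1$ such that $v_i\cdots v_1(\alpha_t)\in\Pi$ for every $i$ — equivalently, one may connect $\alpha_s$ to $\alpha_t$ by a gallery of simple roots using only generators that map one simple root to another. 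Such generators $r$ are exactly those with $r(\alpha_t)=\alpha_{t'}$, which happens precisely when $\langle\alpha_r,\alpha_t\rangle$ and $\langle\alpha_r,\alpha_{t'}\rangle$ are as in the $m=3$ pattern, or when $r$ itself is $t$ or commutes with $t$. So the base case only needs to handle a few shapes.

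The length-one verification is then a direct computation from the defining relations (v3). Here I would unwind what $r(\alpha_t)=\alpha_{t'}$ means: either $r=t'=t$ (trivial, the identity $\tau_t\cdot\sigma_t=\sigma_t$ reads as $\tau_t\sigma_t\tau_t^{-1}=\sigma_t$, which is relation (v3) in the case $m_{s,t}$ even with the roles set so that... actually one must be careful: $t(\alpha_t)=-\alpha_t$, not $\alpha_t$, so this case does not arise), or $m_{r,t}=3$ and $t'$ is the third vertex involved, with $r(\alpha_t)=\alpha_t+\alpha_r=\alpha_{t'}$ only in rank-two sub-situations, or $r$ commutes with $t$ and $t'=t$. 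In each case relation (v3) for the pair $(r,t)$ or $(r,t')$, together with $\tau$-relations from (v2), gives exactly $\Prod_R(\tau,\tau,\cdot)\sigma=\sigma\Prod_R(\tau,\tau,\cdot)$ rearranged into the conjugation form $\iota_W(r)\sigma_{t'}\iota_W(r)^{-1}=\sigma_t$; conjugating by $\iota_W$ and inserting $\tau$-factors handles the $\PVA[\Gamma]$ statement about $\tau_t\sigma_t$ simultaneously, since $\pi_P(\tau_s\sigma_s)=s^2=1$ guarantees $\tau_s\sigma_s\in\PVA[\Gamma]$. The main obstacle I anticipate is precisely the bookkeeping in this base case: matching the asymmetric $\Prod_R(\tau_s,\tau_t,m_{s,t}-1)$ words, with the parity-dependent choice of $r$, against the symmetric ``$r$ sends a simple root to a simple root'' condition, and making sure the gallery argument genuinely stays inside $\Pi$ at every step rather than merely inside $\Phi^+[\Gamma]$. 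Once that is pinned down, the induction is purely formal.
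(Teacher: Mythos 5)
Your overall skeleton (reduce to $v=1$, induct along a factorization of $u$, use (v3) for the elementary step) is the same as the paper's, but the key reduction you propose is not available, and this is a genuine gap. You want to telescope along a word $v=v_k\cdots v_1$ so that every partial image $v_i\cdots v_1(\alpha_t)$ is again a \emph{simple} root, and you invoke a ``classical fact'' that such a gallery of simple roots connected by single generators exists. It does not. A single simple reflection $r$ sends the simple root $\alpha_t$ to $\alpha_t-\langle\alpha_t,\alpha_r\rangle\alpha_r$; since $\Pi$ is a basis of $V$, this is a simple root only when $\langle\alpha_t,\alpha_r\rangle=0$ (i.e.\ $m_{r,t}=2$, and then the root is fixed) --- the case $r=t$ gives $-\alpha_t$ and the case $m_{r,t}\ge 3$ gives $\alpha_t+2\cos(\pi/m_{r,t})\alpha_r$, which is never simple (you half-notice both problems in your own case analysis). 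So the only length-one moves preserving $\Pi$ are the trivial commuting ones, and your induction can never move $\alpha_t$ to a different simple root. Already in type $A_2$ one has $st(\alpha_s)=\alpha_t$, yet the intermediate root $t(\alpha_s)=\alpha_s+\alpha_t$ is not simple and no reordering fixes this.

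What is true, and what the paper's proof actually uses, is a descent by \emph{blocks} rather than by single letters: if $\lg(u)\ge 1$ and $u(\alpha_s)=\alpha_t$, one chooses $r$ with $\lg(ur)<\lg(u)$ and applies Deodhar's results to write $u=w'\,\Prod_R(s,r,m_{r,s}-1)$ with $\lg(w')<\lg(u)$ and $\Prod_R(s,r,m_{r,s}-1)(\alpha_s)=\alpha_x\in\Pi$ (where $x=s$ or $r$ according to the parity of $m_{r,s}$). The elementary move is thus the full dihedral element of length $m_{r,s}-1$, which is exactly the shape of relation (v3), namely $\Prod_R(\tau_s,\tau_r,m_{r,s}-1)\,\sigma_s=\sigma_x\,\Prod_R(\tau_s,\tau_r,m_{r,s}-1)$; then the induction on $\lg(u)$ closes. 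If you replace your single-letter gallery by this block descent (justified by the length statements (1)--(3) of Deodhar quoted in the paper), your argument becomes the paper's proof; as written, the induction has no valid elementary step to telescope over.
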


\begin{proof}
We prove the equality $u\cdot\sigma_s=v\cdot\sigma_t$.
The other equality can be proved in the same way.
We denote by $\lg:W[\Gamma]\to\N$ the word length with respect to $S$.
The proof is based on the following result which can be found in Deodhar \cite{Deodh1}.

{\it Claim 1.}
\begin{itemize}
\item[(1)]
Let $w\in W[\Gamma]$ and $s\in S$.
We have $w(\alpha_s)\in\Phi^+[\Gamma]$ if and only if $\lg(ws)=\lg(w)+1$.
\item[(2)]
Let $w\in W[\Gamma]$ and $s,t\in S$, $s\neq t$, such that $\lg(ws)=\lg(wt)=\lg(w)-1$.
Then $m_{s,t}\neq\infty$ and $\lg(w\,\Prod_R(t,s,m_{s,t})^{-1})=\lg(w)-m_{s,t}$.
\item[(3)]
Let $s,t\in S$, $s\neq t$, such that $m_{s,t}\neq\infty$.
Then $\Prod_R(s,t,m_{s,t}-1)(\alpha_s)=\alpha_r$, where $r=s$ if $m_{s,t}$ is even and $r=t$ if $m_{s,t}$ is odd.
\end{itemize}

Let $u,v\in W[\Gamma]$ and $s,t\in S$ such that $u(\alpha_s)=v(\alpha_t)$.
Upon replacing $u$ with $v^{-1}u$, we may assume that $v=1$, that is, $u(\alpha_s)=\alpha_t$.
We show that $u\cdot\sigma_s=\sigma_t$ by induction on $\lg(u)$.
If $\lg(u)=0$, then $\alpha_s=\alpha_t$, hence $s=t$, and therefore $\sigma_s=\sigma_t$.
So, we can assume that $\lg(u)\ge1$ and that the induction hypothesis holds. 
Since $u(\alpha_s)=\alpha_t$, we have $usu^{-1}=t$, hence $us=tu$.
Moreover, $\lg(us)=\lg(u)+1$, since $u(\alpha_s)=\alpha_t\in\Phi^+[\Gamma]$ (by Claim 1).
Set $w=us=tu$.
Choose $r\in S$ such that $\lg(ur)<\lg(u)$.
We have $r\neq s$, since $\lg(us)=\lg(u)+1$ and $\lg(ur)=\lg(u)-1$.
Moreover, $\lg(ws)=\lg(wr)=\lg(w)-1$ hence, by Claim 1, $m_{r,s}\neq\infty$ and $\lg(w') = \lg(w)-m_{r,s}$, where $w'=w\,\Prod_R(s,r,m_{r,s})^{-1}$.
We set $x=s$ if $m_{r,s}$ is even and $x=r$ if $m_{r,s}$ is odd.
Since $w=us=w'\,\Prod_R(s,r,m_ {r, s} -1)\,s$, we have $u=w'\,\Prod_R(s,r,m_{r,s}-1)$, hence, by Claim 1,
\[
\alpha_t = u(\alpha_s)=w'\,\Prod_R(s,r,m_{r,s}-1)(\alpha_s)=w'(\alpha_x)\,.
\]
Since $\lg(w')=\lg(w)-m_{r,s}\le\lg(w)-2<\lg(w)-1=\lg(u)$, by the induction hypothesis, $w'\cdot\sigma_x=\sigma_t$.
On the other hand,
\begin{gather*}
\Prod_R(s,r,m_{r,s}-1)\cdot\sigma_s=
\Prod_R(\tau_s,\tau_r,m_{r,s}-1)\,\sigma_s\,\Prod_R(\tau_s,\tau_r,m_{r,s}-1)^{-1}=\\
\sigma_x\,\Prod_R(\tau_s,\tau_r,m_{r,s}-1)\,\Prod_R(\tau_s,\tau_r,m_{r,s}-1)^{-1}=
\sigma_x\,,
\end{gather*}
hence:
\[
u\cdot\sigma_s=w'\,\Prod_R(s,r,m_{r,s}-1)\cdot\sigma_s=w'\cdot\sigma_x=\sigma_t\,.
\proved
\]
\end{proof}

Let $\Gamma$ be a Coxeter graph and let $M=(m_{s,t})_{s,t\in S}$ be its Coxeter matrix, we now define some important elements of our two kernels. Let $\beta\in\Phi[\Gamma]$, we choose $w\in W[\Gamma]$ and $s\in S$ such that $w(\alpha_s)=\beta$ and we set $\delta_\beta=w\cdot\sigma_s\in\KVA[\Gamma]$ and $\zeta_\beta=w\cdot(\tau_s\sigma_s)\in\PVA[\Gamma]$.
By Lemma \ref{lem2_2} these definitions do not depend on the choice of $w$ and $s$.
We will see in the proofs of Theorem \ref{thm2_3} and Theorem \ref{thm2_6} that $\{\delta_\beta\mid\beta\in\Phi[\Gamma]\}$ generates $\KVA[\Gamma]$ and $\{\zeta_\beta\mid\beta\in\Phi[\Gamma]\}$ generates $\PVA[\Gamma]$.

We define a new Coxeter matrix $\hat M=(\hat m_{\beta,\gamma})_{\beta,\gamma\in\Phi[\Gamma]}$ indexed by the elements of $\Phi[\Gamma]$ as follows.
\begin{itemize}
\item[(a)]
We set $\hat m_{\beta,\beta}=1$ for all $\beta\in\Phi[\Gamma]$.
\item[(b)]
Let $\beta,\gamma\in\Phi[\Gamma]$, $\beta\neq\gamma$.
If there exist $w\in W[\Gamma]$ and $s,t\in S$ such that $\beta=w(\alpha_s)$, $\gamma=w(\alpha_t)$ and $m_{s,t}\neq\infty$, then we set $\hat m_{\beta,\gamma}=m_{s,t}$.
We set $\hat m_{\beta,\gamma}=\infty$ otherwise.
\end{itemize}
Note that the definition of $\hat m_{\beta,\gamma}$ in Case (b) does not depend on the choice of $w$, $s$ and $t$.
Indeed, suppose there exist $w,w'\in W[\Gamma]$ and $s,t,s',t'\in S$ such that $m_{s,t}\neq\infty$, $m_{s',t'}\neq\infty$, $\beta=w(\alpha_s)=w'(\alpha_{s'})$ and $\gamma=w(\alpha_t)=w'(\alpha_{t'})$.
Then:
\[
\langle\beta,\gamma\rangle=-2\cos(\pi/m_{s,t})=-2\cos(\pi/m_{s',t'})\,,
\]
hence $m_{s,t}=m_{s',t'}$.
We denote by $\hat\Gamma$ the Coxeter graph of $\hat M$.
Moreover, for convenience, we will denote by $\{\hat\delta_\beta\mid\beta\in\Phi[\Gamma]\}$ the standard generating set of $A[\hat\Gamma]$.

\begin{rem}
We know by Deodhar \cite{Deodh1} that $\Phi[\Gamma]$ is finite if and only if $W[\Gamma]$ is finite.
So, $\hat\Gamma$ is finite if and only if $W[\Gamma]$ is finite.
\end{rem}

\begin{expl1}
Suppose $\Gamma$ is the graph $A_{n-1}$ of Figure \ref{fig2_1}.
Then $W[\Gamma]$ is the symmetric group $S_n$, $A[\Gamma]$ is the braid group $B_n$ and $\VA[\Gamma]$ is the virtual braid group $\VB_n$.
We set $U=\R^n$ that we assume to be endowed with the standard scalar product, $\langle\cdot,\cdot\rangle$, and we denote by $\{e_1,\dots,e_n\}$ its canonical basis.
We consider the action of $S_n$ on $U$ by permutations of the coordinates and we denote by $V$ the hyperplane of $U$ defined by the equation $x_1+\cdots+x_n=0$.
Then $V$ is invariant under the action of $S_n$ and the induced representation $W[\Gamma]=S_n\to\GL(V)$ is the canonical representation.
For $i,j\in\{1,\dots,n\}$, $i\neq j$, we set $\alpha_{i,j}=e_j-e_i$.
Then $\Pi=\{\alpha_{1,2},\dots,\alpha_{n-1,n}\}$ is the set of simple roots, and $\Phi[\Gamma]=\{\alpha_{i,j}\mid 1\le i\neq j\le n\}$ is the root system of $W[\Gamma]=S_n$.
The set $\{\delta_{\alpha_{i,j}}\mid 1\le i\neq j\le n\}$ is the generating set of $\KVB_n=\KVA[\Gamma]$ given by Rabenda \cite{Raben1} and Bardakov--Bellingeri \cite{BarBel1} and the set $\{\zeta_{\alpha_{i,j}}\mid 1\le i\neq j\le n\}$ is the generating set of $\PVB_n=\PVA[\Gamma]$ given by Bardakov \cite{Barda1}.
The Coxeter matrix $\hat M$ is defined by:
\begin{itemize}
\item
$\hat m_{\alpha_{i,j},\alpha_{i,j}}=1$ for $1\le i\neq j\le n$.
\item
$\hat m_{\alpha_{i,j},\alpha_{k,\ell}}=2$ for $i,j,k,\ell$ pairwise distinct.
\item
$\hat m_{\alpha_{i,j},\alpha_{j,k}}=3$ for $i,j,k$ pairwise distinct.
\item
$\hat m_{\alpha_{i,j},\alpha_{k,\ell}}=\infty$ otherwise.
\end{itemize}
\end{expl1}

\begin{figure}[ht!]
\begin{center}
\includegraphics[width=4cm]{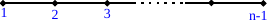}
\caption{Coxeter graph $A_{n-1}$}\label{fig2_1}
\end{center}
\end{figure}

\begin{expl2}
We say that $A[\Gamma]$ (or $\Gamma$) is \emph{simply laced} if $m_{s,t}\in\{2,3\}$ for all $s,t\in S$, $s\neq t$.
Suppose $\Gamma$ is simply laced.
Then each root $\beta\in\Phi[\Gamma]$ can be written $\beta=\sum_{s\in S}\lambda_s\alpha_s$ with $\lambda_s\in\Z$ for all $s\in S$.
Moreover, we have $\langle\beta,\gamma\rangle\in\Z$ for all $\beta,\gamma\in\Phi[\Gamma]$.
In this case $\hat M$ is defined by:
\begin{itemize}
\item
$\hat m_{\beta,\beta}=1$ for $\beta\in\Phi[\Gamma]$.
\item
$\hat m_{\beta,\gamma}=2$ for $\beta,\gamma\in\Phi[\Gamma]$, $\beta\neq\gamma$ and $\langle\beta,\gamma\rangle=0$.
\item
$\hat m_{\beta,\gamma}=3$ for $\beta,\gamma\in\Phi[\Gamma]$, $\beta\neq\gamma$ and $\langle\beta,\gamma\rangle=-1$.
\item
$\hat m_{\beta,\gamma}=\infty$ for $\beta,\gamma\in\Phi[\Gamma]$, $\beta\neq\gamma$ and $\langle\beta,\gamma\rangle\not\in\{0,-1\}$.
\end{itemize}
\end{expl2}

\begin{thm}\label{thm2_3}
Let $\Gamma$ be a Coxeter graph.
Then the map $\{\hat\delta_\beta\mid\beta\in\Phi[\Gamma]\}\to\{\delta_\beta\mid\beta\in\Phi[\Gamma]\}$, $\hat\delta_\beta\mapsto\delta_\beta$, induces an isomorphism $\varphi:A[\hat\Gamma]\to\KVA[\Gamma]$.
\end{thm}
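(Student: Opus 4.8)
The plan is to upgrade the semidirect product decomposition $\VA[\Gamma]=\KVA[\Gamma]\rtimes W[\Gamma]$ of Proposition~\ref{prop2_1} to an \emph{explicit} one, $\VA[\Gamma]\cong A[\hat\Gamma]\rtimes W[\Gamma]$, and then obtain $\varphi$ as the induced isomorphism on kernels. (One could instead compute a Reidemeister--Schreier presentation of $\KVA[\Gamma]$ from the presentation of $\VA[\Gamma]$ and match it with the Artin presentation of $\hat\Gamma$, but the semidirect product route is shorter.)

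First I would check that $\varphi$ is well defined, i.e.\ that the $\delta_\beta$ satisfy the defining relations of $A[\hat\Gamma]$. If $\beta\neq\gamma$ and $\hat m_{\beta,\gamma}=m<\infty$, then by construction of $\hat M$ there are $w\in W[\Gamma]$ and $s,t\in S$ with $\beta=w(\alpha_s)$, $\gamma=w(\alpha_t)$ and $m_{s,t}=m$, so $\delta_\beta=w\cdot\sigma_s$ and $\delta_\gamma=w\cdot\sigma_t$ by Lemma~\ref{lem2_2}; conjugating relation (v1) for the pair $s,t$ by $\iota_W(w)$ gives $\Prod_R(\delta_\gamma,\delta_\beta,m)=\Prod_R(\delta_\beta,\delta_\gamma,m)$, which is exactly the $\hat\Gamma$-relation for $\{\beta,\gamma\}$. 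Hence $\hat\delta_\beta\mapsto\delta_\beta$ extends to a homomorphism $\varphi:A[\hat\Gamma]\to\KVA[\Gamma]$.

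Next I would form $G:=A[\hat\Gamma]\rtimes W[\Gamma]$, where $W[\Gamma]$ acts by $w\cdot\hat\delta_\beta=\hat\delta_{w(\beta)}$; this action is well defined because $\hat m_{w(\beta),w(\gamma)}=\hat m_{\beta,\gamma}$, directly from the definition of $\hat m$. I would then construct two mutually inverse homomorphisms. One is $g:G\to\VA[\Gamma]$, equal to $\varphi$ on $A[\hat\Gamma]$ and to $\iota_W$ on $W[\Gamma]$; it respects the semidirect product because $\iota_W(w)\,\delta_\beta\,\iota_W(w)^{-1}=\delta_{w(\beta)}=\varphi(w\cdot\hat\delta_\beta)$. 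The other is $f:\VA[\Gamma]\to G$, defined on generators by $\sigma_s\mapsto\hat\delta_{\alpha_s}$ and $\tau_s\mapsto s$, and here one must verify relations (v1)--(v3). Relations (v1) map to Artin relations of $\hat\Gamma$ (since $\hat m_{\alpha_s,\alpha_t}=m_{s,t}$, taking $w=1$); relations (v2) map to Coxeter relations of $W[\Gamma]$; and for (v3), writing $m=m_{s,t}$, the left-hand side maps to $\Prod_R(s,t,m-1)\,\hat\delta_{\alpha_s}=\hat\delta_{\Prod_R(s,t,m-1)(\alpha_s)}\,\Prod_R(s,t,m-1)=\hat\delta_{\alpha_r}\,\Prod_R(s,t,m-1)$ in $G$, which is precisely the image of the right-hand side. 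Checking $g\circ f=\id$ and $f\circ g=\id$ is then immediate on generators (using $\delta_{\alpha_s}=\sigma_s$ for the first, and $f(\delta_\beta)=f(\iota_W(w)\,\sigma_s\,\iota_W(w)^{-1})=w\,\hat\delta_{\alpha_s}\,w^{-1}=\hat\delta_{w(\alpha_s)}=\hat\delta_\beta$ in $G$, for $\beta=w(\alpha_s)$, for the second). Thus $f$ is an isomorphism $\VA[\Gamma]\cong G$. Since $\pi_K\circ g$ agrees with the projection $p:G\to W[\Gamma]$ on generators, hence everywhere, $g$ restricts to an isomorphism from $A[\hat\Gamma]=\Ker p$ onto $\KVA[\Gamma]=\Ker\pi_K$, and this restriction is $\varphi$; in particular $\{\delta_\beta\mid\beta\in\Phi[\Gamma]\}$ generates $\KVA[\Gamma]$, as announced before the statement.

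The only step that is not formal bookkeeping is the verification of the mixed relations (v3) under $f$: it rests entirely on the identity $\Prod_R(s,t,m_{s,t}-1)(\alpha_s)=\alpha_r$, with $r$ as in relation (v3), which is Claim~1(3) in the proof of Lemma~\ref{lem2_2}. So I expect that to be the crux. The remaining subtleties---that $\hat M$ is a genuine Coxeter matrix and that the $W[\Gamma]$-action on $A[\hat\Gamma]$ is well defined---are already taken care of by the discussion of $\hat m_{\beta,\gamma}$ preceding the statement.
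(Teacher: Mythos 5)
Your proposal is correct and follows essentially the same route as the paper: the same homomorphism $\varphi$ obtained by conjugating relation (v1) via Lemma~\ref{lem2_2}, the same semidirect product $G=A[\hat\Gamma]\rtimes W[\Gamma]$, and the same pair of maps ($g$ is the paper's $\tilde\varphi$, $f$ is the paper's $\psi$), with the mixed relations (v3) handled by the identity $\Prod_R(s,t,m_{s,t}-1)(\alpha_s)=\alpha_r$ exactly as in the paper. The only (harmless) deviation is bookkeeping: the paper first proves that $\{\delta_\beta\mid\beta\in\Phi[\Gamma]\}$ generates $\KVA[\Gamma]$ by a rewriting argument and then checks only $\psi\circ\tilde\varphi=\id$, whereas you verify both composites and recover surjectivity (and the generation statement) by restricting the isomorphism $\VA[\Gamma]\cong G$ to kernels.
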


\begin{rem}
When $\Gamma$ is the graph $A_{n-1}$ of Figure \ref{fig2_1} we get from Theorem \ref{thm2_3} the presentation of $\KVB_n=\KVA[\Gamma]$ given by Rabenda \cite{Raben1} and Bardakov--Bellingeri \cite{BarBel1}.
\end{rem}

\begin{proof}
We start by showing that $\{\delta_\beta\mid\beta\in\Phi[\Gamma]\}$ generates $\KVA[\Gamma]$.
Let $g\in\KVA[\Gamma]$.
There exist $w_0,w_1,\dots,w_p\in W[\Gamma]$, $s_1,\dots,s_p\in S$ and $\varepsilon_1,\dots,\varepsilon_p\in\{\pm 1\}$ such that:
\[
g=\iota_W(w_0)\,\sigma_{s_1}^{\varepsilon_1}\,\iota_W(w_1) \cdots \sigma_{s_p}^{\varepsilon_p}\,\iota_W(w_p)\,.
\]
For $i\in\{1,\dots,p\}$ we set $\beta_i=(w_0w_1\cdots w_{i-1})(\alpha_{s_i})$, and we set $w=w_0w_1\cdots w_p$.
Then $g=\delta_{\beta_1}^{\varepsilon_1} \cdots \delta_{\beta_p}^{\varepsilon_p}\, \iota_W(w)$.
Since $g\in\KVA[\Gamma]$, we have $1=\pi_K(g)=w$, hence $g=\delta_{\beta_1}^{\varepsilon_1}\cdots\delta_{\beta_p}^{\varepsilon_p}$.

Now we show that there is a homomorphism $\varphi:A[\hat\Gamma]\to\KVA[\Gamma]$ which sends $\hat\delta_\beta$ to $\delta_\beta$ for all $\beta\in\Phi[\Gamma]$.
Let $\beta,\gamma\in\Phi[\Gamma]$ such that $\beta\neq\gamma$ and $\hat m_{\beta,\gamma}\neq\infty$.
There exist $w\in W[\Gamma]$ and $s,t\in S$ such that $\beta=w(\alpha_s)$, $\gamma=w(\alpha_t)$ and $m_{s,t}=\hat m_{\beta,\gamma}$.
Then:
\begin{gather*}
\Prod_R(\delta_\gamma,\delta_\beta,\hat m_{\beta,\gamma})=
\iota_W(w)\,\Prod_R(\sigma_t,\sigma_s,m_{s,t})\,\iota_W(w)^{-1}=\\
\iota_W(w)\,\Prod_R(\sigma_s,\sigma_t,m_{s,t})\,\iota_W(w)^{-1}=
\Prod_R(\delta_\beta,\delta_\gamma,\hat m_{\beta,\gamma})\,.
\end{gather*}
Note that this homomorphism is surjective since we know that $\{\delta_\beta\mid\beta\in\Phi[\Gamma]\}$ generates $\KVA[\Gamma]$.

The action of $W[\Gamma]$ on $\{\hat\delta_\beta\mid\beta\in\Phi[\Gamma]\}$ defined by $w\cdot\hat\delta_\beta=\hat\delta_{w(\beta)}$ extends to an action of $W[\Gamma]$ on $A[\hat\Gamma]$.
We can therefore consider the semi-direct product $G=A[\hat\Gamma]\rtimes W[\Gamma]$ determined by this action.
Now, we show that the homomorphisms $\varphi:A[\hat\Gamma]\to\KVA[\Gamma]\subset\VA[\Gamma]$ and $\iota_W:W[\Gamma]\to\VA[\Gamma]$ induce a homomorphism $\tilde\varphi:G\to\VA[\Gamma]$.
In order to do this, we just need to check that $\varphi(w\cdot\hat\delta_\beta)=\iota_W(w)\,\varphi(\hat\delta_\beta)\,\iota_W(w)^{-1}$ for all $w\in W[\Gamma]$ and all $\beta\in\Phi[\Gamma]$, and this equality is easily verified since both terms are by definition equal to $\delta_{w(\beta)}$.

Let $\psi:\SS\sqcup\TT\to G$ be the map defined by $\psi(\sigma_s)=\hat\delta_{\alpha_s}$ and $\psi(\tau_s)=s$ for all $s\in S$.
Now we show that this map induces a homomorphism $\psi:\VA[\Gamma]\to G$.
Let $s,t\in S$, $s\neq t$, such that $m_{s,t}\neq\infty$.
Then:
\begin{gather*}
\Prod_R(\psi(\sigma_t),\psi(\sigma_s),m_{s,t})=
\Prod_R(\hat\delta_{\alpha_t},\hat\delta_{\alpha_s},\hat m_{\alpha_s,\alpha_t})=\\
\Prod_R(\hat\delta_{\alpha_s},\hat\delta_{\alpha_t},\hat m_{\alpha_s,\alpha_t})=
\Prod_R(\psi(\sigma_s),\psi(\sigma_t),m_{s,t})\,.
\end{gather*}
Let $s,t\in S$, $s\neq t$, such that $m_{s,t}\neq\infty$.
Then:
\[
\Prod_R(\psi(\tau_t),\psi(\tau_s),m_{s,t})=
\Prod_R(t,s,m_{s,t})=
\Prod_R(s,t,m_{s,t})=
\Prod_R(\psi(\tau_s),\psi(\tau_t),m_{s,t})\,.
\]
Let $s\in S$.
Then:
\[
\psi(\tau_s)^2=s^2=1\,.
\]
Let $s,t\in S$, $s\neq t$, such that $m_{s,t}\neq\infty$.
Set $r=s$ if $m_{s,t}$ is even and $r=t$ if $m_{s,t}$ is odd.
Let $w=\Prod_R(s,t,m_{s,t}-1)\in W[\Gamma]$.
Recall that, by Deodhar \cite{Deodh1}, $w(\alpha_s)=\alpha_r$.
Then: 
\begin{gather*}
\Prod_R(\psi(\tau_s),\psi(\tau_t),m_{s,t}-1)\,\psi(\sigma_s)=
\Prod_R(s,t,m_{s,t}-1)\,\hat\delta_{\alpha_s}=
w\hat\delta_{\alpha_s}=
\hat\delta_{w(\alpha_s)}w=\\
\hat\delta_{\alpha_r}\,\Prod_R(s,t,m_{s,t}-1)=
\psi(\sigma_r)\,\Prod_R(\psi(\tau_s),\psi(\tau_t),m_{s,t}-1)\,.
\end{gather*}

Now we prove that $\psi\circ\tilde\varphi=\id$.
This implies that $\tilde\varphi$ is injective, and therefore that $\varphi$ is injective, finishing the proof of the theorem.
Let $s\in S$.
Then $(\psi\circ\tilde\varphi)(s)=\psi(\tau_s)=s$.
Let $\beta\in\Phi[\Gamma]$.
Let $w\in W[\Gamma]$ and $s\in S$ such that $\beta=w(\alpha_s)$.
Then:
\begin{gather*}
(\psi\circ\tilde\varphi)(\hat\delta_\beta) =
\psi(\delta_\beta)=
\psi(\delta_{w(\alpha_s)})=
\psi(w\cdot\delta_{\alpha_s})=
\psi(\iota_W(w)\,\sigma_s\,\iota_W(w)^{-1})=
w\hat\delta_{\alpha_s}w^{-1}=\\
w\cdot\hat\delta_{\alpha_s}=
\hat\delta_{w(\alpha_s)}=
\hat\delta_\beta\,.
\end{gather*}
Since $S\cup\{\hat\delta_\beta\mid\beta\in\Phi[\Gamma]\}$ generates $G$, it follows that $\psi\circ\tilde\varphi=\id$.
\end{proof}

From now on we identify $\KVA[\Gamma]$ with $A[\hat\Gamma]$.
In particular, we assume that $\{\delta_\beta\mid\beta\in\Phi[\Gamma]\}$ is the standard generating set of $A[\hat\Gamma]$.

\begin{corl}\label{corl2_4}
Let $\Gamma$ be a Coxeter graph.
Then $\iota_A:A[\Gamma]\to\VA[\Gamma]$ is injective.
\end{corl}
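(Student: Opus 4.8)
The plan is to identify $\iota_A$, via the isomorphism of Theorem~\ref{thm2_3}, with the inclusion of a standard parabolic subgroup of the Artin group $A[\hat\Gamma]=\KVA[\Gamma]$, and then to invoke van der Lek's embedding theorem. The first step is to check that the full Coxeter subgraph $\hat\Gamma_\Pi$ of $\hat\Gamma$ spanned by the simple roots $\Pi=\{\alpha_s\mid s\in S\}$ coincides, under the bijection $s\leftrightarrow\alpha_s$, with $\Gamma$ itself; that is, $\hat m_{\alpha_s,\alpha_t}=m_{s,t}$ for all $s,t\in S$. If $m_{s,t}\neq\infty$ this is immediate from clause~(b) of the definition of $\hat M$ by taking $w=1$. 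If $m_{s,t}=\infty$, one notes that there can be no $w\in W[\Gamma]$ and $s',t'\in S$ with $m_{s',t'}\neq\infty$, $\alpha_s=w(\alpha_{s'})$ and $\alpha_t=w(\alpha_{t'})$: since the canonical bilinear form is $W[\Gamma]$-invariant, such a $w$ would give $-2=\langle\alpha_s,\alpha_t\rangle=\langle\alpha_{s'},\alpha_{t'}\rangle=-2\cos(\pi/m_{s',t'})$, which is impossible for finite $m_{s',t'}$; hence $\hat m_{\alpha_s,\alpha_t}=\infty$.

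Next I would observe that $\iota_A$ actually takes values in $\KVA[\Gamma]$. Indeed $\iota_A(s)=\sigma_s=\delta_{\alpha_s}$ (taking $w=1$ in the definition of $\delta_\beta$), and under the identification $A[\hat\Gamma]=\KVA[\Gamma]$ this is the standard generator $\hat\delta_{\alpha_s}$ of $A[\hat\Gamma]$. Consequently $\iota_A$ is the composite of a homomorphism $\lambda\colon A[\Gamma]=A[\hat\Gamma_\Pi]\to A[\hat\Gamma]=\KVA[\Gamma]$ with the inclusion $\KVA[\Gamma]\hookrightarrow\VA[\Gamma]$, where $\lambda$ is the homomorphism induced by the inclusion of the full subgraph $\hat\Gamma_\Pi$ into $\hat\Gamma$ (it sends $\hat\delta_{\alpha_s}$ to $\hat\delta_{\alpha_s}$). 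Since the inclusion $\KVA[\Gamma]\hookrightarrow\VA[\Gamma]$ is injective, it suffices to prove that $\lambda$ is injective.

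That $\lambda$ is injective is exactly the statement that a standard parabolic subgroup of an Artin group embeds in it, which is van der Lek's theorem \cite{Lek1}. Since that reference works with finite Coxeter graphs and the graphs $\hat\Gamma$, $\hat\Gamma_\Pi$ need not be finite, one bootstraps as at the beginning of this section: any element of $A[\hat\Gamma_\Pi]$ lies in $A[\hat\Gamma_X]$ for some finite $X\subseteq\Pi$, and for such $X$ the homomorphism $A[\hat\Gamma_X]\to A[\hat\Gamma]$ is injective, being the direct limit over finite $Y\supseteq X$ of the homomorphisms $A[\hat\Gamma_X]\to A[\hat\Gamma_Y]$, each injective by the finite case of van der Lek's theorem. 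The only steps requiring a little (minor) care are the computation of $\hat m_{\alpha_s,\alpha_t}$ in the case $m_{s,t}=\infty$ and this bootstrapping past the finiteness hypothesis; everything else is formal.
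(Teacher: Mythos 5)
Your proposal is correct and follows essentially the same route as the paper: it identifies $\iota_A$ with the composition of the isomorphism $A[\Gamma]\cong A[\hat\Gamma_\Pi]$ induced by $s\mapsto\alpha_s$ and the standard parabolic embedding $A[\hat\Gamma_\Pi]\hookrightarrow A[\hat\Gamma]=\KVA[\Gamma]$, injective by van der Lek. Your extra checks (the computation of $\hat m_{\alpha_s,\alpha_t}$ when $m_{s,t}=\infty$ via the $W[\Gamma]$-invariance of the bilinear form, and the direct-limit reduction to finite subgraphs) simply make explicit details the paper leaves to the reader or to its Section~\ref{sec2} preamble.
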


\begin{proof}
Let $\Gamma$ be a Coxeter graph and let $M=(m_{s,t})_{s,t\in S}$ be its Coxeter matrix.
Recall that, for $X\subset S$, we set $M_X=(m_{s,t})_{s,t\in X}$ and we denote by $\Gamma_X$ the Coxeter graph of $M_X$.
Recall also that, by Van der Lek \cite{Lek1}, the homomorphism $\iota_X:A[\Gamma_X]\to A[\Gamma]$ induced by the inclusion is injective.

Let $\Pi=\{\alpha_s\mid s\in S\}$ be the set of simple roots.
Observe that the map $S\to\Pi$, $s\mapsto\alpha_s$, induces an isomorphism from $\Gamma$ to $\hat\Gamma_\Pi$, and therefore an isomorphism from $A[\Gamma]$ to $A[\hat\Gamma_\Pi]$.
By composing this isomorphism with the embedding $\iota_\Pi:A[\hat\Gamma_\Pi]\hookrightarrow A[\hat\Gamma]$, we get an injective homomorphism from $A[\Gamma]$ into $A[\hat\Gamma]=\KVA[\Gamma]$ which sends $s$ to $\delta_{\alpha_s}$ for all $s\in S$.
Since $\delta_{\alpha_s}=\sigma_s$ for all $s$, this homomorphism is nothing else than $\iota_A$.
\end{proof}

We turn now to determine a presentation for $\PVA[\Gamma]$.
Take an abstract set $\{\hat\zeta_\beta\mid\beta\in\Phi[\Gamma]\}$ in one-to-one correspondence with $\Phi[\Gamma]$.
Let $\beta,\gamma\in\Phi[\Gamma]$ such that $\hat m_{\beta,\gamma}\neq\infty$.
Let $m=\hat m_{\beta,\gamma}$.
We define roots $\beta_1,\dots,\beta_m\in\Phi[\Gamma]$ by $\beta_1=\beta$ and, for $k\ge2$,
\[
\beta_k=\left\{\begin{array}{ll}
\Prod_R(r_\gamma,r_\beta,k-1)(\gamma)&\text{if }k\text{ is even}\,,\\
\Prod_R(r_\beta,r_\gamma,k-1)(\beta)&\text{if }k\text{ is odd}\,.
\end{array}\right.
\]
We set 
\[
Z(\gamma,\beta,\hat m_{\beta,\gamma})=\hat\zeta_{\beta_m}\cdots\hat\zeta_{\beta_2}\hat\zeta_{\beta_1}
\]
that we consider as a word over $\{\hat\zeta_\beta\mid\beta\in\Phi[\Gamma]\}$.
We denote by $\widehat{\PVA}[\Gamma]$ the group defined by the presentation with generating set $\{\hat\zeta_\beta\mid\beta\in\Phi[\Gamma]\}$ and relations:
\[
Z(\gamma,\beta,\hat m_{\beta,\gamma})=Z(\beta,\gamma,\hat m_{\beta,\gamma}) \text{ for } \beta,\gamma\in\Phi[\Gamma], \ \beta\neq\gamma,  \ \hat m_{\beta,\gamma}\neq\infty.
\]

\begin{expl1}
Suppose that $\Gamma$ is the graph $A_{n-1}$ of Figure \ref{fig2_1}.
Recall that $W[\Gamma]$ is the symmetric group $S_n$, $A[\Gamma]$ is the braid group $B_n$, and $\VA[\Gamma]$ is the virtual braid group $\VB_n$.
Recall also that $\Pi=\{\alpha_{1,2},\dots,\alpha_{n-1,n}\}$ is the set of simple roots and $\Phi[\Gamma]=\{\alpha_{i,j}\mid1\le i\neq j\le n\}$ is the root system of $W[\Gamma]=S_n$.
Then $\widehat{\PVA}[A_{n-1}]$ is the group defined by the presentation with generating set $\{\hat\zeta_{\alpha_{i,j}}\mid1\le i\neq j\le n\}$ and relations:
\begin{itemize}
\item
$\hat\zeta_{\alpha_{i,j}}\hat\zeta_{\alpha_{k,\ell}}=\hat\zeta_{\alpha_{k,\ell}}\hat\zeta_{\alpha_{i,j}}$ for $i,j,k,\ell$ pairwise distinct,
\item
$\hat\zeta_{\alpha_{i,j}}\hat\zeta_{\alpha_{i,k}}\hat\zeta_{\alpha_{j,k}}=\hat\zeta_{\alpha_{j,k}}\hat\zeta_{\alpha_{i,k}}\hat\zeta_{\alpha_{i,j}}$ for $i,j,k$ pairwise distinct.
\end{itemize}
By Bardakov \cite{Barda1}, this group is isomorphic to $\PVB_n$.
\end{expl1}

\begin{expl2}
Suppose $\Gamma$ is simply laced.
Let $\beta,\gamma\in\Phi[\Gamma]$.
 If $\hat m_{\beta,\gamma}=2$, then $\langle\beta,\gamma\rangle=0$ and $r_\beta(\gamma)=\gamma$, hence $Z(\gamma,\beta,\hat m_{\beta,\gamma})=\hat\zeta_\gamma\hat\zeta_\beta$.
If $\hat m_{\beta,\gamma}=3$, then $\langle\beta,\gamma\rangle=-1$, $r_\beta(\gamma)=\beta+\gamma$ and $(r_\beta r_\gamma)(\beta)=\gamma$, hence $Z(\gamma,\beta,\hat m_{\beta,\gamma})=\hat\zeta_\gamma\hat\zeta_{\beta+\gamma}\hat\zeta_\beta$.
So, $\widehat{\PVA}[\Gamma]$ is the group defined by the presentation with generating set $\{\hat\zeta_\beta\mid\beta\in\Phi[\Gamma]\}$ and relations:
\begin{itemize}
\item
$\hat\zeta_\gamma\hat\zeta_\beta=\hat\zeta_\beta\hat\zeta_\gamma$ for $\beta,\gamma\in\Phi[\Gamma]$, $\beta\neq\gamma$ and $\langle\beta,\gamma\rangle=0$,
\item
$\hat\zeta_\gamma\hat\zeta_{\beta+\gamma}\hat\zeta_\beta=\hat\zeta_\beta\hat\zeta_{\beta+\gamma}\hat\zeta_\gamma$ for $\beta,\gamma\in\Phi[\Gamma]$, $\beta\neq\gamma$ and $\langle\beta,\gamma\rangle=-1$.
\end{itemize}
\end{expl2}

The following lemma will not be used in the paper, but it is useful for understanding the presentation of $\PVA[\Gamma]$ given in Theorem \ref{thm2_6}.

\begin{lem}\label{lem2_5}
Let $\Gamma$ be a Coxeter graph.
Let $\beta,\gamma\in\Phi[\Gamma]$ such that $\hat m_{\beta,\gamma}\neq\infty$.
Let $m=\hat m_{\beta,\gamma}$ and let $\beta_1,\dots,\beta_m\in\Phi[\Gamma]$ be the roots such that $Z(\gamma,\beta,\hat m_{\beta,\gamma})=\hat\zeta_{\beta_m}\cdots\hat\zeta_{\beta_2}\hat\zeta_{\beta_1}$.
Then $Z(\beta,\gamma,\hat m_{\beta,\gamma})$ is the reverse word of $Z(\gamma,\beta,\hat m_{\beta,\gamma})$, that is, $Z(\beta,\gamma,\hat m_{\beta,\gamma})=\hat\zeta_{\beta_1}\hat\zeta_{\beta_2}\cdots\hat\zeta_{\beta_m}$.
\end{lem}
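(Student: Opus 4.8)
The plan is to unwind the two definitions $Z(\gamma,\beta,m)=\hat\zeta_{\beta_m}\cdots\hat\zeta_{\beta_1}$ and $Z(\beta,\gamma,m)=\hat\zeta_{\gamma_m}\cdots\hat\zeta_{\gamma_1}$ (where $\gamma_1,\dots,\gamma_m$ are defined by the same recursion as $\beta_1,\dots,\beta_m$ but with the roles of $\beta$ and $\gamma$ swapped), and show that $\gamma_k=\beta_{m+1-k}$ for all $k\in\{1,\dots,m\}$. Since the defining relation of $\widehat{\PVA}[\Gamma]$ equates these two words read in opposite orders, this equality $\gamma_k = \beta_{m+1-k}$ is exactly the assertion that $Z(\beta,\gamma,m)$ is the reverse word of $Z(\gamma,\beta,m)$.

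The key observation is a symmetry of the recursion inside the dihedral subgroup of $W[\Gamma]$ generated by $r_\beta$ and $r_\gamma$. Let $w_0=\Prod_R(r_\beta,r_\gamma,m)=\Prod_R(r_\gamma,r_\beta,m)$ be the longest element of this dihedral group (these two products coincide, as $\hat m_{\beta,\gamma}=m$ and the braid-type relation holds by construction of $\hat M$). First I would record the closed forms $\beta_k=\Prod_R(r_\beta,r_\gamma,k-1)(\beta)$ for $k$ odd and $\beta_k=\Prod_R(r_\gamma,r_\beta,k-1)(\gamma)$ for $k$ even (as in the statement), together with the analogous formulas for $\gamma_k$ obtained by exchanging $\beta\leftrightarrow\gamma$. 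The heart of the argument is then the identity $w_0(\beta_k)=\pm\beta_{m+1-k}$, or more precisely $\beta_{m+1-k}=\Prod_R(r_\beta,r_\gamma,m-k)(\beta_1)$ rearranged via $\Prod_R(r_\gamma,r_\beta,m)=\Prod_R(r_\gamma,r_\beta,m-k)\,\Prod_R(\cdots,k)$; this is a straightforward telescoping of reflection words once one is careful about parities. Matching this against the closed form for $\gamma_k$ gives $\gamma_k=\beta_{m+1-k}$.

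A clean way to carry out the parity bookkeeping without case splitting is to use Example 1 / Example 2 style reasoning at the level of $W[\Gamma]$: the $m$ roots $\beta_1,\dots,\beta_m$ are precisely the $m$ positive roots of the rank-two root subsystem spanned by $\beta$ and $\gamma$, listed in a fixed order determined by the reflection orbit, and reversing the list corresponds exactly to interchanging the two simple roots $\beta$ and $\gamma$ of that rank-two system. One can make this rigorous by invoking the well-known description of dihedral root systems (e.g. via Bourbaki \cite{Bourb1} or Deodhar \cite{Deodh1}): in the dihedral group of order $2m$ the sequence $\beta_1,\dots,\beta_m$ enumerates the positive roots by increasing ``angle'', and the two enumerations starting from $\beta$ versus starting from $\gamma$ are mutual reverses. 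This reduces the whole lemma to a finite, rank-two verification.

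The main obstacle is purely notational: keeping track of which of $\Prod_L$ and $\Prod_R$ appears, and the even/odd alternation in the recursion, so that the claimed index reversal $\gamma_k=\beta_{m+1-k}$ comes out with the correct roots (and not, say, their negatives — recall $r_\beta=r_{-\beta}$, so sign ambiguities in intermediate reflection computations must be resolved by checking positivity). Once one fixes the convention that $\beta_1=\beta$, $\beta_2=r_\beta(\gamma)$, $\beta_3=r_\beta r_\gamma(\beta)$, and so on, alternately applying $r_\beta,r_\gamma,r_\beta,\dots$, the reversal is visibly the same list generated from the other end, and the computation is routine.
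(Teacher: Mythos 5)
Your plan coincides with the paper's: write $Z(\beta,\gamma,\hat m_{\beta,\gamma})=\hat\zeta_{\gamma_m}\cdots\hat\zeta_{\gamma_2}\hat\zeta_{\gamma_1}$ with $\gamma_1,\dots,\gamma_m$ given by the same recursion with $\beta$ and $\gamma$ interchanged, and prove $\gamma_k=\beta_{m+1-k}$ by a computation inside the dihedral subgroup generated by $r_\beta$ and $r_\gamma$. Your rank-two reformulation (the $\beta_k$ are the images under a suitable $w\in W[\Gamma]$ of the $m$ positive roots of a standard $I_2(m)$ subsystem, listed in order, and the $\gamma_k$ list the same roots from the other end) is a legitimate way to carry out that finite verification, since by definition of $\hat m_{\beta,\gamma}$ there exist $w\in W[\Gamma]$ and $s,t\in S$ with $\beta=w(\alpha_s)$, $\gamma=w(\alpha_t)$ and $m_{s,t}=\hat m_{\beta,\gamma}$.

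One concrete correction, though: the identity you single out as the heart of the argument, $w_0(\beta_k)=\pm\beta_{m+1-k}$ with $w_0=\Prod_R(r_\beta,r_\gamma,m)$, is false when $m$ is even, because then $w_0$ acts as $-\mathrm{id}$ on the plane spanned by $\beta$ and $\gamma$, so $w_0(\beta_k)=-\beta_k$, while the $\beta_j$ are pairwise non-proportional (already for $m=4$ one has $w_0(\beta_1)=-\beta\neq\pm\gamma=\pm\beta_4$). Likewise the formula $\beta_{m+1-k}=\Prod_R(r_\beta,r_\gamma,m-k)(\beta_1)$ agrees with the definition only when $m+1-k$ is odd. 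The mechanism that actually makes the telescoping work --- and the one the paper uses --- is the pair of fixed-point identities $\beta=\Prod_R(r_\beta,r_\gamma,m-1)(\beta)$ and $\gamma=\Prod_R(r_\gamma,r_\beta,m-1)(\gamma)$ (stated here for $m$ even, with the analogous ones for $m$ odd), taken from Deodhar, combined with the observation that these odd-length words are palindromes, so $\Prod_R$ can be traded for $\Prod_L$ and the product collapses to $\Prod_R(r_\gamma,r_\beta,k-1)(\gamma)$ or $\Prod_R(r_\beta,r_\gamma,k-1)(\beta)$ according to the parity of $k$. Since you explicitly planned to redo the parity bookkeeping, and your dihedral root-system argument does deliver the reversal, this is a repairable slip rather than a fatal gap; just replace the $w_0$ identity by these fixed-point identities (or by the rank-two enumeration) when writing the details.
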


\begin{proof}
We set $\beta_1=\beta$, $\gamma_1=\gamma$ and, for $k\ge2$,
\begin{gather*}
\beta_k=\left\{\begin{array}{ll}
\Prod_R(r_\gamma,r_\beta,k-1)(\gamma)&\text{if }k\text{ is even}\,,\\
\Prod_R(r_\beta,r_\gamma,k-1)(\beta)&\text{if }k\text{ is odd}\,,
\end{array}\right.\\
\gamma_k=\left\{\begin{array}{ll}
\Prod_R(r_\beta,r_\gamma,k-1)(\beta)&\text{if }k\text{ is even}\,,\\
\Prod_R(r_\gamma,r_\beta,k-1)(\gamma)&\text{if }k\text{ is odd}\,.
\end{array}\right.
\end{gather*}
We have: 
\[
Z(\gamma,\beta,\hat m_{\beta,\gamma})=\hat\zeta_{\beta_m}\cdots\hat\zeta_{\beta_2}\hat\zeta_{\beta_1} \text{ and }
Z(\beta,\gamma,\hat m_{\beta,\gamma})=\hat\zeta_{\gamma_m}\cdots\hat\zeta_{\gamma_2} \hat\zeta_{\gamma_1}\,.
\]
We need to show that $\beta_k=\gamma_{m+1-k}$ for all $k\in\{1,\dots,m\}$.
We assume that $m$ is even.
The case where $m$ is odd can be proved in the same way.
Recall that, by Deodhar \cite{Deodh1}, we have:
\[
\gamma=\Prod_R(r_\gamma,r_\beta,m-1)(\gamma)\text{ and }\beta=\Prod_R(r_\beta,r_\gamma,m-1)(\beta)\,.
\]
If $k$ is even, then $m+1-k$ is odd, hence:
\begin{gather*}
\gamma_{m+1-k}=
\Prod_R(r_\gamma,r_\beta,m-k)(\gamma)=
\Prod_R(r_\gamma,r_\beta,m-k)\,\Prod_R(r_\gamma,r_\beta,m-1)(\gamma)=\\
\Prod_R(r_\gamma,r_\beta,m-k)\,\Prod_L(r_\beta,r_\gamma,m-1)(\gamma)=
\Prod_L(r_\beta,r_\gamma,k-1)(\gamma)=\\
\Prod_R(r_\gamma,r_\beta,k-1)(\gamma)=
\beta_k\,.
\end{gather*}
If $k$ is odd, then $m+1-k$ is even, hence:
\begin{gather*}
\gamma_{m+1-k}=
\Prod_R(r_\beta,r_\gamma,m-k)(\beta)=
\Prod_R(r_\beta,r_\gamma,m-k)\,\Prod_R(r_\beta,r_\gamma,m-1)(\beta)=\\
\Prod_R(r_\beta,r_\gamma,m-k)\,\Prod_L(r_\gamma,r_\beta,m-1)(\beta)=
\Prod_L(r_\beta,r_\gamma,k-1)(\beta)=\\
\Prod_R(r_\beta,r_\gamma,k-1)(\beta)=
\beta_k\,.\qquad\proved
\end{gather*}
\end{proof}

\begin{thm}\label{thm2_6}
Let $\Gamma$ be a Coxeter graph.
Then the map $\{\hat\zeta_\beta\mid\beta\in\Phi[\Gamma]\}\to\{\zeta_\beta\mid\beta\in\Phi[\Gamma]\}$, $\hat\zeta_\beta\mapsto\zeta_\beta$, induces an isomorphism $\varphi:\widehat{\PVA}[\Gamma]\to\PVA[\Gamma]$.
\end{thm}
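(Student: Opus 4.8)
The plan is to imitate, nearly verbatim, the proof of Theorem~\ref{thm2_3}; the one genuinely new ingredient is a bookkeeping identity that, after all the $\tau$'s have been pushed to the left, identifies the word $\Prod_R(\sigma_t,\sigma_s,m_{s,t})$ with $\Prod_R(\tau_t,\tau_s,m_{s,t})$ followed by $Z(\alpha_t,\alpha_s,m_{s,t})$ read in the $\zeta$'s.

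First I would show that $\{\zeta_\beta\mid\beta\in\Phi[\Gamma]\}$ generates $\PVA[\Gamma]$, exactly as for the $\delta_\beta$ in Theorem~\ref{thm2_3}. From $\zeta_{\alpha_s}=\tau_s\sigma_s$ and $\tau_s^2=1$ one gets $\sigma_s=\tau_s\zeta_{\alpha_s}$, and from Lemma~\ref{lem2_2} one gets $\iota_W(w)\,\zeta_\beta\,\iota_W(w)^{-1}=\zeta_{w(\beta)}$; combining these yields, for all $w\in W[\Gamma]$, $s\in S$, $\varepsilon\in\{\pm1\}$, an identity $\iota_W(w)\,\sigma_s^\varepsilon=\zeta_\mu^\varepsilon\,\iota_W(ws)$ with $\mu\in\Phi[\Gamma]$. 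Writing $g\in\PVA[\Gamma]$ as $\iota_W(u_0)\sigma_{s_1}^{\varepsilon_1}\iota_W(u_1)\cdots\sigma_{s_p}^{\varepsilon_p}\iota_W(u_p)$ and pushing all $\iota_W$-factors to the right gives $g=\zeta_{\beta_1}^{\varepsilon_1}\cdots\zeta_{\beta_p}^{\varepsilon_p}\iota_W(u)$; since $\pi_P(\zeta_\beta)=1$ for all $\beta$ and $\pi_P(g)=1$, we have $u=1$, hence $g$ is a word in the $\zeta_\beta^{\pm1}$.

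The heart of the proof is then to show that $\varphi$ is well defined, i.e. that $Z(\gamma,\beta,\hat m_{\beta,\gamma})=Z(\beta,\gamma,\hat m_{\beta,\gamma})$ holds among the $\zeta$'s whenever $\hat m_{\beta,\gamma}\neq\infty$. By $W$-equivariance ($\iota_W(w)\zeta_\beta\iota_W(w)^{-1}=\zeta_{w(\beta)}$ and $w\,r_\beta\,w^{-1}=r_{w(\beta)}$) it is enough to treat $\beta=\alpha_s$, $\gamma=\alpha_t$ with $m:=m_{s,t}\neq\infty$. Substituting $\sigma_s=\tau_s\zeta_{\alpha_s}$, $\sigma_t=\tau_t\zeta_{\alpha_t}$ into $\Prod_R(\sigma_t,\sigma_s,m)$ and moving each $\tau$ leftwards via $\zeta_\beta\,\tau_r=\tau_r\,\zeta_{r(\beta)}$, a straightforward induction on $m$ would give, in $\VA[\Gamma]$,
\[
\Prod_R(\sigma_t,\sigma_s,m)=\Prod_R(\tau_t,\tau_s,m)\cdot\zeta_{\beta_m}\cdots\zeta_{\beta_2}\zeta_{\beta_1},
\]
where $\beta_1,\dots,\beta_m$ are exactly the roots occurring in $Z(\alpha_t,\alpha_s,m)$. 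Writing the analogous identity with $s$ and $t$ exchanged, relation (v1) equates the two left-hand sides, relation (v2) equates the two prefixes $\Prod_R(\tau_t,\tau_s,m)=\Prod_R(\tau_s,\tau_t,m)$, and cancelling gives $Z(\alpha_t,\alpha_s,m)=Z(\alpha_s,\alpha_t,m)$ among the $\zeta$'s. Thus $\varphi$ is a well-defined homomorphism, surjective by the previous paragraph. The main obstacle is precisely this identity: one must verify that, once all the $\tau$'s are moved out, the remaining product of $\zeta$'s is literally $Z(\alpha_t,\alpha_s,m)$, with the roots $\beta_k$ appearing in the prescribed order and built from the prescribed alternating products of reflections — elementary but error-prone.

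The remainder mirrors Theorem~\ref{thm2_3}. The assignment $w\cdot\hat\zeta_\beta=\hat\zeta_{w(\beta)}$ extends to an action of $W[\Gamma]$ on $\widehat{\PVA}[\Gamma]$, since $\hat m_{w(\beta),w(\gamma)}=\hat m_{\beta,\gamma}$ and $r_{w(\beta)}=w r_\beta w^{-1}$ imply that each defining relation is carried to a defining relation; so one forms $G=\widehat{\PVA}[\Gamma]\rtimes W[\Gamma]$, and $\varphi$ and $\iota_W$ assemble into $\tilde\varphi:G\to\VA[\Gamma]$ because $\varphi(w\cdot\hat\zeta_\beta)$ and $\iota_W(w)\varphi(\hat\zeta_\beta)\iota_W(w)^{-1}$ both equal $\zeta_{w(\beta)}$. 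Define $\psi:\VA[\Gamma]\to G$ by $\psi(\tau_s)=s$ and $\psi(\sigma_s)=s\,\hat\zeta_{\alpha_s}=\hat\zeta_{-\alpha_s}\,s$. Relations (v2) are immediate; for (v3) both sides evaluate to $\hat\zeta_{-\alpha_r}\,r\,\Prod_R(s,t,m_{s,t}-1)$, using $\Prod_R(s,t,m_{s,t}-1)(\alpha_s)=\alpha_r$ (Deodhar~\cite{Deodh1}) and $w\,r_{\alpha_s}\,w^{-1}=r_{\alpha_r}=r$ for $w=\Prod_R(s,t,m_{s,t}-1)$; for (v1) the same push-left computation as above, carried out inside $G$, writes both $\Prod_R(\psi(\sigma_t),\psi(\sigma_s),m)$ and $\Prod_R(\psi(\sigma_s),\psi(\sigma_t),m)$ as a Coxeter word in $W[\Gamma]$ times the $\hat\zeta$-version of $Z(\alpha_t,\alpha_s,m)$, resp.\ of $Z(\alpha_s,\alpha_t,m)$, and these agree in $G$ since $\Prod_R(t,s,m)=\Prod_R(s,t,m)$ in $W[\Gamma]$ and $Z(\alpha_t,\alpha_s,m)=Z(\alpha_s,\alpha_t,m)$ is a defining relation of $\widehat{\PVA}[\Gamma]$. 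Finally $\psi\circ\tilde\varphi=\id_G$: it is the identity on $W[\Gamma]$, and on $\hat\zeta_\beta=w\cdot\hat\zeta_{\alpha_s}$ one computes $(\psi\circ\tilde\varphi)(\hat\zeta_\beta)=\psi(\zeta_\beta)=w\,\psi(\tau_s)\psi(\sigma_s)\,w^{-1}=w\,\hat\zeta_{\alpha_s}\,w^{-1}=\hat\zeta_{w(\alpha_s)}=\hat\zeta_\beta$, using $s^2=1$ in $W[\Gamma]$. Hence $\tilde\varphi$ is injective, so $\varphi$ is injective, and together with surjectivity $\varphi$ is an isomorphism.
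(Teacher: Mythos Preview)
Your proof is correct and follows essentially the same approach as the paper's own proof: the same generation argument, the same key identity $\Prod_R(\sigma_t,\sigma_s,m)=\Prod_R(\tau_t,\tau_s,m)\,\zeta_{\beta_m}\cdots\zeta_{\beta_1}$ (which the paper writes equivalently as $\zeta_{\beta_m}\cdots\zeta_{\beta_1}=\Prod_L(\tau_s,\tau_t,m)\,\Prod_R(\sigma_t,\sigma_s,m)$), the same semi-direct product $G=\widehat{\PVA}[\Gamma]\rtimes W[\Gamma]$, the same $\psi$ with $\psi(\sigma_s)=s\hat\zeta_{\alpha_s}$, and the same verification that $\psi\circ\tilde\varphi=\id$. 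Your reduction of the well-definedness check to simple roots via $W$-equivariance is a cosmetic variant of the paper's direct computation for $\beta=w(\alpha_s)$, $\gamma=w(\alpha_t)$.
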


\begin{proof}
The proof is similar to that of Theorem \ref{thm2_3}.
One can easily show that $\{\zeta_\beta\mid\beta\in\Phi[\Gamma]\}$ generates $\PVA[\Gamma]$ in the same way as we shown that $\{\delta_\beta\mid\beta\in\Phi[\Gamma]\}$ generates $\KVA[\Gamma]$ in the proof of Theorem \ref{thm2_3}.

Now, we show that we have a homomorphism $\varphi:\widehat{\PVA}[\Gamma]\to\PVA[\Gamma]$ which sends $\hat\zeta_\beta$ to $\zeta_\beta$ for all $\beta\in\Phi[\Gamma]$.
Let $\beta,\gamma\in\Phi[\Gamma]$ such that $\hat m_{\beta,\gamma}\neq\infty$.
There exist $w\in W$ and $s,t\in S$ such that $\beta=w(\alpha_s)$, $\gamma=w(\alpha_t)$, and $m_{s,t}=\hat m_{\beta,\gamma}$.
Recall the definition of $Z(\gamma,\beta,\hat m_{\beta,\gamma})$.
Let $m =\hat m_{\beta,\gamma}=m_{s,t}$ and let $\beta_1,\dots,\beta_m\in\Phi[\Gamma]$ be the roots defined by $\beta_1=\beta$ and, for $k\ge 2$,
\[
\beta_k=\left\{\begin{array}{ll}
\Prod_R(r_\gamma,r_\beta,k-1)(\gamma)&\text{if }k\text{ is even}\,,\\
\Prod_R(r_\beta,r_\gamma,k-1)(\beta)&\text{if }k\text{ is odd}\,.
\end{array}\right.
\]
Then $Z(\gamma,\beta,\hat m_{\beta,\gamma})=\hat\zeta_{\beta_m}\cdots\hat\zeta_{\beta_2}\hat\zeta_{\beta_1}$.
Let $k\in\{1,\dots,m\}$.
Suppose that $k$ is odd.
Then:
\[
\beta_k=
\Prod_R(r_\beta,r_\gamma,k-1)(\beta)=
(w\,\Prod_R(s,t,k-1)\,w^{-1} w)(\alpha_s)=
(w\,\Prod_R(s,t,k-1))(\alpha_s)\,,
\]
hence:
\begin{gather*}
\zeta_{\beta_k}=
w\,\Prod_R(s,t,k-1) \cdot \zeta_{\alpha_s}=\\
\iota_W(w)\,\Prod_R(\tau_s,\tau_t,k-1)\,(\tau_s\sigma_s)\,\Prod_R(\tau_s,\tau_t,k-1)^{-1} \iota_W(w)^{-1} =\\
\iota_W(w)\,\Prod_R(\tau_t,\tau_s,k)\, \sigma_s\,\Prod_R(\tau_s,\tau_t,k-1)^{-1} \iota_W(w)^{-1}\,.
\end{gather*}
We show in the same way that, if $k$ is even, then: 
\[
\zeta_{\beta_k}=
\iota_W(w)\,\Prod_R(\tau_s,\tau_t,k)\, \sigma_t\, \Prod_R(\tau_t,\tau_s,k-1)^{-1}\iota_W(w)^{-1}\,.
\]
Thus: 
\[
\zeta_{\beta_m}\cdots\zeta_{\beta_2}\zeta_{\beta_1}=
\iota_W(w)\,\Prod_L(\tau_s,\tau_t,m)\,\Prod_R(\sigma_t,\sigma_s,m)\,\iota_W(w)^{-1}\,.
\]
Similarly, if $\gamma_1,\dots,\gamma_m$ are the roots such that $Z(\beta,\gamma,\hat m_{\beta,\gamma})=\hat\zeta_{\gamma_m}\cdots\hat\zeta_{\gamma_2}\hat\zeta_{\gamma_1}$, then:
\[
\zeta_{\gamma_m}\cdots\zeta_{\gamma_2}\zeta_{\gamma_1}=\iota_W(w)\,\Prod_L(\tau_t,\tau_s,m)\,\Prod_R(\sigma_s,\sigma_t,m)\,\iota_W(w)^{-1}\,.
\]
So, $\zeta_{\beta_m}\cdots\zeta_{\beta_2}\zeta_{\beta_1}=\zeta_{\gamma_m}\cdots\zeta_{\gamma_2}\zeta_{\gamma_1}$.

The action of $W[\Gamma]$ on $\{\hat\zeta_\beta\mid\beta\in\Phi[\Gamma]\}$ defined by $w\cdot\hat\zeta_\beta=\hat\zeta_{w(\beta)}$ extends to an action of $W[\Gamma]$ on $\widehat{\PVA}[\Gamma]$.
Thus, we can consider the semi-direct product $G=\widehat{\PVA}[\Gamma]\rtimes W[\Gamma]$ determined by this action.
Now, we show that the homomorphisms $\varphi:\widehat{\PVA}[\Gamma]\to\PVA[\Gamma]\subset\VA[\Gamma]$ and $\iota_W:W[\Gamma]\to\VA[\Gamma]$ induce a homomorphism $\tilde\varphi:G\to\VA[\Gamma]$.
To do this, we just need to check that $\varphi(w\cdot\hat\zeta_\beta)=\iota_W(w)\,\varphi(\hat\zeta_\beta)\,\iota_W(w)^{-1}$ for all $w\in W[\Gamma]$ and all $\beta\in\Phi[\Gamma]$, and this equality is easily verified since both terms are by definition equal to $\zeta_{w(\beta)}$.

Let $\psi:\SS\sqcup\TT\to G$ be the map defined by $\psi(\sigma_s)=s\hat\zeta_{\alpha_s}$ and $\psi(\tau_s)=s$ for all $s\in S$.
Now, we show that this map induces a homomorphism $\psi:\VA[\Gamma]\to G$.
Let $s,t\in S$, $s\neq t$, such that $m_{s,t}\neq\infty$.
Then:
\[
\Prod_R(\psi(\tau_t),\psi(\tau_s),m_{s,t})=
\Prod_R(t,s,m_{s,t})=
\Prod_R(s,t,m_{s,t})=
\Prod_R(\psi(\tau_s),\psi(\tau_t),m_{s,t})\,.
\]
Let $s\in S$.
Then:
\[
\psi(\tau_s)^2=s^2=1\,.
\]
Let $s,t\in S$, $s\neq t$, such that $m_{s,t}\neq\infty$.
Let $m=m_{s,t}$ and let $\beta_1,\dots,\beta_m\in\Phi[\Gamma]$ be the roots defined by $\beta_1=\alpha_s$ and, for $k\ge 2$,
\[
\beta_k=\left\{\begin{array}{ll}
\Prod_R(t,s,k-1)(\alpha_t)&\text{if }k\text{ is even}\,,\\
\Prod_R(s,t,k-1)(\alpha_s)&\text{if }k\text{ is odd}\,.
\end{array}\right.
\]
Then:
\begin{gather*}
\Prod_R(\psi(\sigma_t),\psi(\sigma_s),m_{s,t})=
\cdots s\hat\zeta_{\alpha_s} t\hat\zeta_{\alpha_t} s\hat\zeta_{\alpha_s}=
\Prod_R(t,s,m_{s,t})\,\hat\zeta_{\beta_m} \cdots \hat\zeta_{\beta_2} \hat\zeta_{\beta_1}=\\
\Prod_R(t,s,m_{s,t})\,Z(\alpha_t,\alpha_s,\hat m_{\alpha_s,\alpha_t})\,.
\end{gather*}
Similarly, 
\[
\Prod_R(\psi(\sigma_s),\psi(\sigma_t),m_{s,t})=
\Prod_R(s,t,m_{s,t})\,Z(\alpha_s,\alpha_t,\hat m_{\alpha_s,\alpha_t})\,.
\]
Hence:
\[
\Prod_R(\psi(\sigma_t),\psi(\sigma_s),m_{s,t})=
\Prod_R(\psi(\sigma_s),\psi(\sigma_t),m_{s,t})\,.
\]
Let $s,t\in S$, $s\neq t$, such that $m_{s,t}\neq\infty$.
Set $r=s$ if $m_{s,t}$ is even and $r=t$ if $m_{s,t}$ is odd.
Then the following equality can be proved in the same way as the corresponding equality in the proof of Theorem~\ref{thm2_3}:
\[
\Prod_R(\psi(\tau_s),\psi(\tau_t),m_{s,t}-1)\,\psi(\sigma_s) =
\psi(\sigma_r)\,\Prod_R(\psi(\tau_s),\psi(\tau_t),m_{s,t}-1)\,.
\]

Finally, we prove that $\psi\circ\tilde\varphi=\id$ in the same way as in the proof of Theorem \ref{thm2_3}, which implies the injectivity of $\tilde\varphi$, hence of $\varphi$, and concludes the proof of Theorem \ref{thm2_6}.
\end{proof}

%%%%%%%%%%

\section{Center of $\VA[\Gamma]$}\label{sec3}

We start this section by recalling some results on centers of Coxeter groups.
If $\Gamma$ is a Coxeter graph and $M=(m_{s,t})_{s,t\in S}$ is its Coxeter matrix, then we denote by $\lg:W[\Gamma]\to\N$ the word length in $W[\Gamma]$ with respect to $S$.
The following can be found in Bourbaki \cite[Exercice 22, Page 22]{Bourb1}.

\begin{prop}[Bourbaki \cite{Bourb1}]\label{prop3_1}
Let $\Gamma$ be a Coxeter graph and let $M=(m_{s,t})_{s,t\in S}$ be its Coxeter matrix.
The following conditions on an element $w_0\in W[\Gamma]$ are equivalent:
\begin{itemize}
\item[(a)]
For all $u\in W[\Gamma]$, $\lg(w_0)=\lg(u)+\lg(u^{-1}w_0)$.
\item[(b)]
For all $s\in S$, $\lg(w_0)>\lg(sw_0)$.
\end{itemize}
Moreover, $w_0$ exists if and only if $W[\Gamma]$ is finite.
If $w_0$ satisfies (a) and/or (b), then $w_0$ is unique, $w_0$ is an involution, and $w_0Sw_0=S$.
\end{prop}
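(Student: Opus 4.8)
The plan is to translate conditions (a) and (b) into properties of the action of $w_0$ on the root system $\Phi[\Gamma]$, and then to deduce everything from two facts of Deodhar \cite{Deodh1}: for $w\in W[\Gamma]$ and $s\in S$ one has $\lg(ws)=\lg(w)+1$ if and only if $w(\alpha_s)\in\Phi^+[\Gamma]$ (this is recalled in the proof of Lemma \ref{lem2_2}); and, for every $w\in W[\Gamma]$, the inversion set $N(w)=\{\beta\in\Phi^+[\Gamma]\mid w(\beta)\in\Phi^-[\Gamma]\}$ is finite of cardinality $\lg(w)$. Applying the first fact to $w_0^{-1}$, together with $\lg(sw_0)=\lg(w_0^{-1}s)$ and $\lg(w_0)=\lg(w_0^{-1})$, one sees that (b) is equivalent to $w_0^{-1}(\alpha_s)\in\Phi^-[\Gamma]$ for all $s\in S$. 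Since every positive root is a non-negative combination of the $\alpha_s$, and since, by Deodhar's dichotomy $\Phi[\Gamma]=\Phi^+[\Gamma]\sqcup\Phi^-[\Gamma]$, a root whose coordinates in the basis $\Pi$ are all $\le0$ lies in $\Phi^-[\Gamma]$, the latter is in turn equivalent to
\begin{equation*}
w_0^{-1}\bigl(\Phi^+[\Gamma]\bigr)=\Phi^-[\Gamma]\,,\qquad\text{equivalently}\qquad w_0\bigl(\Phi^+[\Gamma]\bigr)=\Phi^-[\Gamma]\,. \tag{$\star$}
\end{equation*}
Note that $(\star)$ is symmetric under $w_0\leftrightarrow w_0^{-1}$.

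Next I would read off the consequences of $(\star)$. If $w_0$ satisfies $(\star)$ then $N(w_0)=\Phi^+[\Gamma]$, which is therefore finite; hence $\Phi[\Gamma]$ is finite and so $W[\Gamma]$ is finite (Deodhar \cite{Deodh1}, as recalled in the Remark following the definition of $\hat M$), and moreover $\lg(w_0)=|\Phi^+[\Gamma]|$. Conversely, if $W[\Gamma]$ is finite, an element $w_0$ of maximal length satisfies (b): for each $s\in S$ one has $\lg(sw_0)=\lg(w_0)\pm1$, and maximality rules out $+1$. For the equivalence (a)$\Leftrightarrow$(b): taking $u=s$ in (a) gives $\lg(w_0)=1+\lg(sw_0)$, which is (b); conversely, assuming (b), hence $(\star)$, fix $u\in W[\Gamma]$ and observe that $\gamma\mapsto-w_0(\gamma)$ is a bijection of $\Phi^+[\Gamma]$ carrying $N(u^{-1}w_0)=\{\beta\in\Phi^+[\Gamma]\mid u^{-1}w_0(\beta)\in\Phi^-[\Gamma]\}$ onto $\{\gamma\in\Phi^+[\Gamma]\mid u^{-1}(\gamma)\in\Phi^+[\Gamma]\}=\Phi^+[\Gamma]\setminus N(u^{-1})$, so that $\lg(u^{-1}w_0)=|\Phi^+[\Gamma]|-\lg(u^{-1})=\lg(w_0)-\lg(u)$, which is exactly (a).

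For the ``moreover'' part: if $w_0$ and $w_0'$ both satisfy (b), then by $(\star)$ the element $(w_0')^{-1}w_0$ maps $\Phi^+[\Gamma]$ onto itself, so $N\bigl((w_0')^{-1}w_0\bigr)=\emptyset$ and $\lg\bigl((w_0')^{-1}w_0\bigr)=0$, whence $w_0=w_0'$; this gives uniqueness. Since $(\star)$ is symmetric in $w_0$ and $w_0^{-1}$, the element $w_0^{-1}$ also satisfies (b), so $w_0^{-1}=w_0$ by uniqueness, i.e.\ $w_0$ is an involution. Finally, for $s\in S$, apply (a) with $u=w_0sw_0$: since $w_0^2=1$ we have $u^{-1}=u$ and $u^{-1}w_0=w_0s$, hence $\lg(w_0)=\lg(w_0sw_0)+\lg(w_0s)$; moreover $\lg(w_0s)=\lg(w_0)-1$ because $w_0(\alpha_s)\in\Phi^-[\Gamma]$ by $(\star)$. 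Therefore $\lg(w_0sw_0)=1$, so $w_0sw_0\in S$; as $g\mapsto w_0gw_0$ is an involutive automorphism of $W[\Gamma]$ sending $S$ into $S$, it restricts to a bijection of $S$, so $w_0Sw_0=S$.

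The one genuinely non-formal point is the passage to $(\star)$ together with the inversion-count description of $\lg$: specifically the implication ``$w_0^{-1}$ negates every simple root $\Rightarrow$ $w_0^{-1}$ sends every positive root into $\Phi^-[\Gamma]$'', which uses Deodhar's partition of $\Phi[\Gamma]$, and the finiteness of $N(w)$, which is what forbids a longest element when $W[\Gamma]$ is infinite. Everything after that is bookkeeping with these two facts.
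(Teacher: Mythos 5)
The paper offers no proof of Proposition \ref{prop3_1}: it is quoted verbatim from Bourbaki \cite{Bourb1} (Exercice 22, Page 22), so there is nothing internal to compare your argument against line by line. Your proof is correct and self-contained modulo two standard facts about the root system: the characterization $\lg(ws)=\lg(w)+1\Leftrightarrow w(\alpha_s)\in\Phi^+[\Gamma]$ (which the paper itself recalls from Deodhar \cite{Deodh1} in the proof of Lemma \ref{lem2_2}) and the identity $\lg(w)=|N(w)|$ for the inversion set $N(w)$, which is also in Deodhar although the paper never states it explicitly. All the individual steps check out: the translation of (b) into $w_0^{-1}(\alpha_s)\in\Phi^-[\Gamma]$ for all $s$, the upgrade to $(\star)$ via the sign dichotomy on coordinates, the finiteness of $\Phi^+[\Gamma]$ (hence of $W[\Gamma]$, by the Deodhar fact recalled in the paper's remark on $\hat\Gamma$), the bijection $\gamma\mapsto -w_0(\gamma)$ giving (b)$\Rightarrow$(a), and the uniqueness, involutivity and $w_0Sw_0=S$ deductions. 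The difference with Bourbaki is mainly one of toolkit: Bourbaki's exercise is resolved by combinatorial length/exchange-condition arguments inside $W[\Gamma]$, whereas you work through the action on $\Phi[\Gamma]$ and inversion counts; your route has the advantage of meshing with the root-system machinery the paper already uses throughout Section \ref{sec2}, at the modest cost of invoking the inversion-set formula for $\lg$, which is not among the facts the paper explicitly recalls.
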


The element $w_0$ of Proposition \ref{prop3_1}, if it exists, is called \emph{the longest element} of $W[\Gamma]$.
The center of a group $G$ will be denoted by $Z(G)$.
The following can be found in Bourbaki \cite[Exercice 3, Page 130]{Bourb1}.

\begin{prop}[Bourbaki \cite{Bourb1}]\label{prop3_2}
Let $\Gamma$ be a connected Coxeter graph.
\begin{itemize}
\item[(1)]
If $W[\Gamma]$ is infinite, then $Z(W[\Gamma])=\{1\}$.
\item[(2)]
Suppose that $W[\Gamma]$ is finite.
Let $w_0$ be the longest element of $W[\Gamma]$.
Then $Z(W[\Gamma])=\{1,w_0\}$ if $w_0$ is central, and $Z(W[\Gamma])=\{1\}$ otherwise.
\end{itemize}
\end{prop}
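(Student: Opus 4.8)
The plan is to exploit the faithful canonical representation $W[\Gamma]\hookrightarrow\GL(V)$ and to determine exactly how a central element can act on the simple roots. So let $z\in Z(W[\Gamma])$. First I would note that, for each $s\in S$, centrality gives $zsz^{-1}=s$, hence, using the relation $r_\beta=wsw^{-1}$ for $\beta=w(\alpha_s)$ recalled before Lemma \ref{lem2_2} (with $w=z$), the identity $r_{z(\alpha_s)}=r_{\alpha_s}$ holds in $\GL(V)$. Since $r_\beta$ has $(-1)$-eigenline $\R\beta$ and $\langle\alpha_s,\alpha_s\rangle=2$, this forces $z(\alpha_s)=\varepsilon_s\alpha_s$ with $\varepsilon_s\in\{\pm1\}$. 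Next, whenever $s\neq t$ and $\langle\alpha_s,\alpha_t\rangle\neq0$ (equivalently $m_{s,t}\geq3$), invariance of the canonical bilinear form under $z$ yields $\varepsilon_s\varepsilon_t\langle\alpha_s,\alpha_t\rangle=\langle\alpha_s,\alpha_t\rangle$, so $\varepsilon_s=\varepsilon_t$. As $\Gamma$ is connected, all the $\varepsilon_s$ coincide with a single $\varepsilon\in\{\pm1\}$, so $z$ acts on $V$ as $\varepsilon\,\id_V$. By faithfulness, either $z=1$, or $z=z_0$, where $z_0$ denotes the unique element of $W[\Gamma]$ acting on $V$ as $-\id_V$, provided such an element exists in $W[\Gamma]$.

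Then I would identify $z_0$ and decide when it belongs to $W[\Gamma]$. If $z_0$ exists, then $z_0(\beta)=-\beta\in\Phi^-[\Gamma]$ for every $\beta\in\Phi^+[\Gamma]$; in particular $z_0(\alpha_s)\in\Phi^-[\Gamma]$ for all $s\in S$, so the length criterion (Claim~1(1) in the proof of Lemma \ref{lem2_2}, applied to $z_0^{-1}=z_0$) gives $\lg(sz_0)<\lg(z_0)$ for all $s\in S$. By Proposition \ref{prop3_1}(b), $z_0$ is then the longest element $w_0$ of $W[\Gamma]$, and in particular $W[\Gamma]$ is finite. Conversely, if $W[\Gamma]$ is finite and $w_0$ acts as $-\id_V$, then $w_0$ is obviously central.

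To conclude: if $W[\Gamma]$ is infinite, then $w_0$ does not exist, hence neither does $z_0$, hence $Z(W[\Gamma])=\{1\}$, which is part~(1). If $W[\Gamma]$ is finite, the first paragraph shows $Z(W[\Gamma])\subseteq\{1,w_0\}$; moreover $w_0$ is central if and only if it acts as $-\id_V$, because a central $w_0$ must act as $\pm\id_V$ by the first paragraph and cannot act as $\id_V$ since $w_0\neq1$ (as $S\neq\emptyset$). Hence $Z(W[\Gamma])=\{1,w_0\}$ when $w_0$ is central and $Z(W[\Gamma])=\{1\}$ otherwise, which is part~(2).

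The computations with the canonical form are routine; the one place that needs a little care is the passage from ``$-\id_V\in W[\Gamma]$'' to ``$W[\Gamma]$ is finite and that element is $w_0$''. One must either run the argument through Proposition \ref{prop3_1}(b) as above, being careful to use that $z_0$ is an involution in order to read off the descents of $z_0$ from the values $z_0(\alpha_s)$, or else argue via Deodhar's equivalence that $\Phi[\Gamma]$ is finite if and only if $W[\Gamma]$ is, together with the characterization of $w_0$ as the unique element carrying $\Phi^+[\Gamma]$ onto $\Phi^-[\Gamma]$. I expect this small piece of bookkeeping to be the main obstacle; everything else is immediate from the two recalled facts about $W[\Gamma]$.
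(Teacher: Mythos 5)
Your argument is correct, but note that the paper does not prove this proposition at all: it is quoted from Bourbaki \cite{Bourb1} (Exercice 3, Page 130), so there is no internal proof to compare with, and what you have written is a self-contained solution of that exercise. Your route is the classical one via the canonical linear representation: centrality gives $r_{z(\alpha_s)}=zsz^{-1}=s=r_{\alpha_s}$, the $(-1)$-eigenline of $r_\beta$ is $\R\beta$ (this computation never uses non-degeneracy of the form, so it is valid in the infinite case too), and invariance of the form plus $\langle\alpha_s,\alpha_s\rangle=2$ pins the sign, so a central element acts as $\pm\id_V$ with a single sign by connectedness of $\Gamma$; faithfulness then reduces everything to whether $-\id_V$ is realized in $W[\Gamma]$. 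The one step that genuinely needed care is the one you flagged: passing from ``$z_0$ acts as $-\id_V$'' to ``$W[\Gamma]$ is finite and $z_0=w_0$''. Your handling is sound: $z_0^2$ acts trivially so $z_0$ is an involution by faithfulness, $z_0(\alpha_s)=-\alpha_s\in\Phi^-[\Gamma]$ gives $\lg(z_0s)<\lg(z_0)$ by the Deodhar criterion (Claim 1(1) in the proof of Lemma \ref{lem2_2}), and taking inverses converts this into the left-descent condition $\lg(sw_0)<\lg(w_0)$ required by Proposition \ref{prop3_1}(b), whose last assertion then forces finiteness and identifies $z_0$ with $w_0$. The converse direction and the dichotomy in part (2) (a central $w_0$ cannot act as $\id_V$ since $w_0\neq1$ when $S\neq\emptyset$) are fine, and the degenerate case $S=\emptyset$ is vacuous. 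So the proposal is a complete and correct proof, essentially the argument Bourbaki's exercise intends, supplied where the paper relies on a citation.
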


Now, we return to the study of virtual Artin groups.

\begin{thm}\label{thm3_3}
Let $\Gamma$ be a Coxeter graph.
Assume that $W[\Gamma]$ is a subgroup of $\VA[\Gamma]$ via the embedding $\iota_W:W[\Gamma]\hookrightarrow \VA[\Gamma]$.
Then the centralizer of $W[\Gamma]$ in $\VA[\Gamma]$ is $Z(W[\Gamma])$.
\end{thm}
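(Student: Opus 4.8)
The plan is to compute the centralizer by exploiting the semi-direct product decomposition $\VA[\Gamma]=\KVA[\Gamma]\rtimes W[\Gamma]$ of Proposition~\ref{prop2_1}, together with the identification $\KVA[\Gamma]=A[\hat\Gamma]$ from Theorem~\ref{thm2_3} and the fact that $W[\Gamma]$ acts on the standard generating set $\{\delta_\beta\mid\beta\in\Phi[\Gamma]\}$ of $A[\hat\Gamma]$ exactly by the permutation action of $W[\Gamma]$ on the root system $\Phi[\Gamma]$. Write a general element $g\in\VA[\Gamma]$ uniquely as $g=a\,\iota_W(w)$ with $a\in\KVA[\Gamma]$ and $w\in W[\Gamma]$. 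First I would observe that since $Z(W[\Gamma])$ is certainly contained in the centralizer (it commutes with all of $W[\Gamma]$ by definition), only the reverse inclusion needs work. So suppose $g$ centralizes $\iota_W(W[\Gamma])$; for every $v\in W[\Gamma]$ we get $a\,\iota_W(w)\,\iota_W(v)=\iota_W(v)\,a\,\iota_W(w)$, which after using the semidirect product structure rearranges to $(v^{-1}\cdot a)^{-1}a = \iota_W(v w v^{-1} w^{-1})$, i.e. $(v^{-1}\cdot a)^{-1}a$ lies in $\KVA[\Gamma]\cap\iota_W(W[\Gamma])=\{1\}$. Hence $v\cdot a=a$ for all $v\in W[\Gamma]$, and simultaneously $vwv^{-1}=w$ for all $v$, so $w\in Z(W[\Gamma])$. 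It then remains to show that the only $W[\Gamma]$-fixed element of $A[\hat\Gamma]$ is the identity; this reduces the theorem to the purely Artin-group statement that $A[\hat\Gamma]^{W[\Gamma]}=\{1\}$.

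To prove $A[\hat\Gamma]^{W[\Gamma]}=\{1\}$, I would pass to the abelianization. The abelianization of $A[\hat\Gamma]$ is the free abelian group $\mathbb{Z}^{(\Phi[\Gamma])}$ on the roots (modulo identifying generators connected by an edge labelled an odd number — in the simply laced case all the weight-one edges, but in general we still get a free abelian group on the set of connected components of $\hat\Gamma$ with respect to odd-labelled edges), and the $W[\Gamma]$-action descends to this abelianization as (a quotient of) the permutation action on $\Phi[\Gamma]$. An element of $A[\hat\Gamma]$ fixed by all of $W[\Gamma]$ has image in $\mathbb{Z}^{(\Phi[\Gamma])}$ fixed by the permutation action; but $W[\Gamma]$ acts on $\Phi[\Gamma]$ with $r_\beta(\beta)=-\beta\neq\beta$, so no root is fixed, and any orbit of a permutation action that is finite contributes only the zero vector to the fixed subspace of a finitely-supported element when the orbit has size $\geq 2$ — more carefully, a finitely supported integer vector fixed by a permutation must be constant on each orbit, hence supported on finite orbits, hence (since all nontrivial orbits have size $\geq2$ and the total support is finite, and there are infinitely many roots in the relevant cases, while in the finite case every root orbit still has size $\geq 2$) the vector is zero. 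Thus a $W[\Gamma]$-fixed $a$ lies in the commutator subgroup $[A[\hat\Gamma],A[\hat\Gamma]]$, and I would then need a second argument — e.g. transfinite/inductive use of a parabolic or Coxeter-theoretic filtration, or an appeal to a residual nilpotence / lower central series property of Artin groups — to bootstrap from "trivial in the abelianization" to "trivial".

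The main obstacle I anticipate is precisely this last step: killing a $W[\Gamma]$-fixed element that is already trivial modulo $[A[\hat\Gamma],A[\hat\Gamma]]$. For general Artin groups one cannot simply invoke residual nilpotence, so I expect the authors instead to argue more geometrically: localize to a finite or affine parabolic subgraph $\hat\Gamma_X$ on which enough structure is known, use that $W[\Gamma]$ moves any given finite support out of any fixed parabolic (since orbits of roots under $W[\Gamma]$ are unbounded whenever $\Gamma$ is not of spherical type, and in the spherical case one can use the explicit longest element $w_0$ from Proposition~\ref{prop3_1} acting as $-1$ or close to it on the root system), and conclude that a fixed $a$ must have support stable under an element that has no nonempty finite support — forcing $a=1$. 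Concretely, if $a\neq1$ pick a shortest expression of $a$ in the $\delta_\beta$; the set of roots appearing is finite; choosing $w\in W[\Gamma]$ with $w(\beta)\notin$ that finite set for some $\beta$ in the support (possible because $W[\Gamma]$-orbits on $\Phi[\Gamma]$ are infinite in the non-spherical irreducible case, and because in the spherical case $w_0$ negates roots so $w_0$ cannot fix any nonempty-supported element either once one checks the abelianization argument handles the sign), one contradicts $w\cdot a=a$ by comparing, say, images in the abelianization or exponent sums of a well-chosen generator. I would write this up by first reducing to $\Gamma$ connected (the general case following since both $\VA$ and the centralizer behave well under the decomposition of $\Gamma$ into connected components), then splitting into the cases $W[\Gamma]$ infinite versus finite, using Proposition~\ref{prop3_2} on the $W[\Gamma]$-component $w$ and the orbit/abelianization argument on the $A[\hat\Gamma]$-component $a$.
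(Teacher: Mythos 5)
Your reduction is fine and matches the paper's first step: writing $g=a\,\iota_W(w)$, the semidirect product structure forces $w\in Z(W[\Gamma])$ and forces $a\in\KVA[\Gamma]=A[\hat\Gamma]$ to be fixed by the $W[\Gamma]$-action. But the heart of the theorem is precisely the statement you leave open, namely that the only $W[\Gamma]$-fixed element of $A[\hat\Gamma]$ is the identity, and your proposal does not establish it. The abelianization step is moreover incorrect as stated: a finitely supported integer vector fixed by the permutation action need not vanish just because every orbit has size at least $2$ --- it must be constant on orbits and supported on finite orbits, and such nonzero vectors exist; for instance when $\Gamma$ is of spherical type, $\Phi[\Gamma]$ is finite and the image of $\prod_{\beta\in O}\delta_\beta$ over any $W[\Gamma]$-orbit $O$ is a nonzero fixed class. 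So at best you learn that a fixed element dies in a quotient of the abelianization, and you yourself acknowledge that you have no argument to pass from there to triviality; the fallback sketches (residual nilpotence of general Artin groups, or moving the support of a shortest expression out of itself) either rely on open properties or fail outright in the spherical case, where every orbit of roots is finite and contained in the finite set $\Phi[\Gamma]$.

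The paper closes this gap by a different mechanism that uses the structure of $\hat\Gamma$ rather than the abelianization. For each $\beta\in\Phi^+[\Gamma]$ one has $\hat m_{\beta,-\beta}=\infty$, so with $\XX^\pm_\beta=\Phi[\Gamma]\setminus\{\mp\beta\}$ and $\YY_\beta=\Phi[\Gamma]\setminus\{\beta,-\beta\}$ one gets an amalgamated product decomposition
\[
A[\hat\Gamma]=A[\hat\Gamma_{\XX^+_\beta}]*_{A[\hat\Gamma_{\YY_\beta}]}A[\hat\Gamma_{\XX^-_\beta}]\,,
\]
by Bellingeri--Paris \cite[Lemma 3.3]{BelPar1}. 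The reflection $r_\beta$ swaps the two factors (it exchanges $\beta$ and $-\beta$ and preserves $\YY_\beta$), and by Bellingeri--Paris \cite[Lemma 3.6]{BelPar1} any element fixed by an automorphism that swaps the factors of an amalgam must lie in the amalgamated subgroup; hence the fixed element $a$ lies in $A[\hat\Gamma_{\YY_\beta}]$ for every positive root $\beta$. Since $\bigcap_{\beta\in\Phi^+[\Gamma]}\YY_\beta=\emptyset$, Van der Lek's theorem on intersections of parabolic subgroups gives $a\in A[\hat\Gamma_\emptyset]=\{1\}$. Note that this only uses commutation with the reflections $r_\beta$, not with all of $W[\Gamma]$. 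Without this (or some equally concrete substitute for your ``second argument''), your proposal does not prove the theorem.
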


\begin{proof}
If $\XX$ is a subset of $\Phi[\Gamma]$, then we denote by $\hat\Gamma_\XX$ the full subgraph of $\hat\Gamma$ spanned by $\XX$.
By Van der Lek \cite[Lemma 4.11]{Lek1} we can identify $A[\hat\Gamma_\XX]$ with the subgroup of $A[\hat\Gamma]=\KVA[\Gamma]$ generated by $\{\delta_\beta\mid\beta\in\XX\}$.
Moreover, again by Van der Lek \cite[Lemma 4.12]{Lek1}, if $\{\XX_i\mid i\in I\}$ is a family of subsets of $\Phi[\Gamma]$, then:
\begin{equation}\label{eq3_1}
\bigcap_{i\in I}A[\hat\Gamma_{\XX_i}]=A[\hat\Gamma_{\cap_{i\in I} \XX_i}]\,.
\end{equation}

It is clear that $Z(W[\Gamma])$ is contained in the centralizer of $W[\Gamma]$ in $\VA[\Gamma]$.
So, we just have to prove that the centralizer of $W[\Gamma]$ in $\VA[\Gamma]$ is contained in $Z(W[\Gamma])$.
Let $g$ be an element of the centralizer of $W[\Gamma]$ in $\VA[\Gamma]$.
We write $g$ in the form $g=hw$ with $h\in\KVA[\Gamma]$ and $w\in W[\Gamma]$.
The element $\pi_K(g)=w$ must lie in the center of $W[\Gamma]$, hence $h$ also lies in the centralizer of $W[\Gamma]$ in $\VA[\Gamma]$.
Let $\beta\in\Phi^+[\Gamma]$.
We set $\XX^+_\beta=\Phi[\Gamma]\setminus\{-\beta\}$, $\XX^-_\beta=\Phi[\Gamma]\setminus\{\beta\}$ and $\YY_\beta=\Phi[\Gamma]\setminus\{\beta,-\beta\}$.
Since $\hat m_{\beta,-\beta}=\infty$, $\XX^+_\beta\cup\XX^-_\beta=\Phi[\Gamma]$ and $\XX^+_\beta\cap\XX^-_\beta=\YY_\beta$, by Bellingeri--Paris \cite[Lemma 3.3]{BelPar1},
\[
\KVA[\Gamma]=A[\hat\Gamma]=A[\hat\Gamma_{\XX^+_\beta}]*_{A[\hat\Gamma_{\YY_\beta}]} A[\hat\Gamma_{\XX^-_\beta}]\,.
\]
Since $r_\beta(\beta)=-\beta$ and $r_\beta(-\beta)=\beta$, we have $r_\beta(\XX^+_\beta)=\XX^-_\beta$, $r_\beta(\XX^-_\beta)=\XX^+_\beta$ and $r_\beta(\YY_\beta)=\YY_\beta$, hence $r_\beta\cdot A[\hat\Gamma_{\XX^+_\beta}]=A[\hat\Gamma_{\XX^-_\beta}]$, $r_\beta \cdot A[\hat\Gamma_{\XX^-_\beta}]=A[\hat\Gamma_{\XX^+_\beta}]$ and $r_\beta\cdot A[\hat\Gamma_{\YY_\beta}]=A[\hat\Gamma_{\YY_\beta}]$.
Since $r_\beta\in W[\Gamma]$ and $h$ lies in the centralizer of $W[\Gamma]$ in $\VA[\Gamma]$, $r_\beta$ and $h$ commute, which means in terms of actions that $r_\beta \cdot h=h$.
By Bellingeri--Paris \cite[Lemma 3.6]{BelPar1}, it follows that $h\in A[\hat\Gamma_{\YY_\beta}]$.
We have $\cap_{\beta\in\Phi^+[\Gamma]}\YY_\beta=\emptyset$, hence, by Equation (\ref{eq3_1}),
\[
h\in\bigcap_{\beta\in\Phi^+[\Gamma]}A[\hat\Gamma_{\YY_\beta}]=A[\hat\Gamma_\emptyset]=\{1\}\,.
\]
Thus, $h=1$ and $g=w\in Z(W[\Gamma])$.
\end{proof}

\begin{corl}\label{corl3_4}
Let $\Gamma$ be a Coxeter graph.
Then $Z(\VA[\Gamma])=\{1\}$.
\end{corl}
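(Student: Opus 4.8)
The plan is to deduce $Z(\VA[\Gamma])=\{1\}$ from Theorem \ref{thm3_3} by reducing the general case to the connected case and then invoking the structure of centers of Coxeter groups via Proposition \ref{prop3_2}. First I would observe that $Z(\VA[\Gamma])$ is contained in the centralizer of $W[\Gamma]$ in $\VA[\Gamma]$ (indeed it is contained in the centralizer of everything), so by Theorem \ref{thm3_3} we already have $Z(\VA[\Gamma])\subseteq Z(W[\Gamma])$, with $W[\Gamma]$ identified with its image under $\iota_W$. Thus it suffices to show that no nontrivial element $w$ of $Z(W[\Gamma])$ can be central in $\VA[\Gamma]$; equivalently, that the centralizer of $\VA[\Gamma]$ inside $\iota_W(W[\Gamma])$ is trivial.

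Next I would handle the case where $\Gamma$ is disconnected. If $\Gamma=\Gamma_1\sqcup\Gamma_2$ with both parts nonempty, then $\VA[\Gamma]=\VA[\Gamma_1]\times\VA[\Gamma_2]$ (the generators and relations split, since $m_{s,t}=2$ across components and the relations (v1), (v2), (v3) for such pairs are exactly the commutation relations), so $Z(\VA[\Gamma])=Z(\VA[\Gamma_1])\times Z(\VA[\Gamma_2])$ and the statement follows by induction on the number of components, reducing everything to the connected case. (If $\Gamma=\emptyset$ the group is trivial and there is nothing to prove; if $\Gamma$ has one vertex, $\VA[\Gamma]\cong\Z*(\Z/2\Z)$, which has trivial center.) So assume $\Gamma$ is connected with at least two vertices.

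Now suppose $w\in Z(W[\Gamma])$ is central in $\VA[\Gamma]$. By Proposition \ref{prop3_2}, either $w=1$ or $W[\Gamma]$ is finite and $w=w_0$ is the longest element. In the latter case I would derive a contradiction by testing centrality against a generator $\sigma_s$: centrality of $w$ in $\VA[\Gamma]$ forces $w\cdot\sigma_s=\sigma_s$ for all $s\in S$, which by definition of the action and of $\delta_\beta$ means $\delta_{w(\alpha_s)}=\delta_{\alpha_s}$, i.e. $w(\alpha_s)=\alpha_s$ in the root system (the $\delta_\beta$ are distinct standard generators of the Artin group $A[\hat\Gamma]$, hence pairwise distinct as elements). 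But for the longest element $w_0$ of a finite Coxeter group one has $\lg(w_0 s)=\lg(w_0)-1<\lg(w_0)$ for every $s$, so by Claim 1(1) in the proof of Lemma \ref{lem2_2}, $w_0(\alpha_s)\in\Phi^-[\Gamma]\neq\Phi^+[\Gamma]$; in particular $w_0(\alpha_s)\neq\alpha_s$. This contradiction shows $w=1$, hence $Z(\VA[\Gamma])\subseteq\{1\}$, and since $1$ is obviously central we are done. The main obstacle is the bookkeeping in the disconnected reduction — verifying cleanly that $\VA[\Gamma]$ is the direct product over connected components and that Theorem \ref{thm3_3}'s conclusion about $Z(W[\Gamma])$ interacts correctly with it — but this is routine once the splitting of the presentation is spelled out.
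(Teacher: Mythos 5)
Your proposal is correct and follows essentially the same route as the paper: reduce to connected $\Gamma$ via the direct product decomposition over components, apply Theorem \ref{thm3_3} and Proposition \ref{prop3_2} to get $Z(\VA[\Gamma])\subseteq\{1,w_0\}$, and rule out $w_0$ by showing it fails to commute with some $\sigma_s=\delta_{\alpha_s}$. The only cosmetic difference is that you use $w_0(\alpha_s)\in\Phi^-[\Gamma]$ (hence $\neq\alpha_s$) where the paper uses the sharper fact $w_0(\alpha_s)=-\alpha_s$, which makes no difference to the argument.
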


\begin{proof}
Let $\Gamma_1,\dots,\Gamma_\ell$ be the connected components of $\Gamma$.
We have $\VA[\Gamma]=\VA[\Gamma_1]\times\cdots\times\VA[\Gamma_\ell]$, hence:
\[
Z(\VA[\Gamma])=Z(\VA[\Gamma_1])\times\cdots\times Z(\VA[\Gamma_\ell])\,.
\]
So, we can assume that $\Gamma$ is connected.
Note that the center of $\VA[\Gamma]$ is contained in the centralizer of $W[\Gamma]$ in $\VA[\Gamma]$.
If either $W[\Gamma]$ is infinite or $W[\Gamma]$ is finite and the longest element $w_0$ is not central, then $Z(\VA[\Gamma])=\{1\}$ by an easy combination of Proposition \ref{prop3_2} and Theorem \ref{thm3_3}.
Suppose $W[\Gamma]$ is finite and $w_0$ is central.
Then, by Proposition \ref{prop3_2} and Theorem \ref{thm3_3}, $Z(\VA[\Gamma])\subset\{1,w_0\}$.
Let $s\in S$.
By Deodhar \cite{Deodh1} we have $w_0(\alpha_s)=-\alpha_s$, hence $w_0\cdot\delta_{\alpha_s} = \delta_{-\alpha_s} \neq \delta_{\alpha_s}$, thus $w_0$ and $\delta_{\alpha_s}=\sigma_s$ do not commute.
So, $w_0\not\in Z(\VA[\Gamma])$, hence $Z(\VA[\Gamma])=\{1\}$.
\end{proof}

\begin{rem}
Determining the center of $A[\Gamma]$ for any Coxeter graph is an open problem.
\end{rem}

\begin{rem}
The fact that  $Z(\VB_n)=\{1\}$  has been proved in \cite{DieNic1} using  the construction by Bartholdi--Enriquez--Etingof--Rains \cite{BaEnEtRa1} of a classifying space for $\PVB_n$.
\end{rem}

We can use the same techniques to prove the following which is also a direct consequence of Charney--Morris-Wright \cite{ChaMor1}.

\begin{prop}\label{prop3_5}
Let $\Gamma$ be a Coxeter graph.
Then $Z(\KVA[\Gamma])=\{1\}$.
\end{prop}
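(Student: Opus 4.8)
The plan is to mimic the proof of Theorem~\ref{thm3_3}, exploiting the amalgamated product decomposition of $\KVA[\Gamma]=A[\hat\Gamma]$ along a root $\beta$ and its negative. As in Corollary~\ref{corl3_4}, we first reduce to the case where $\Gamma$ is connected, using the fact that $\KVA[\Gamma]=\KVA[\Gamma_1]\times\cdots\times\KVA[\Gamma_\ell]$ for the connected components $\Gamma_1,\dots,\Gamma_\ell$ of $\Gamma$, so that $Z(\KVA[\Gamma])$ is the product of the $Z(\KVA[\Gamma_i])$. Once $\Gamma$ is connected, we have to show that any $h$ in the center of $A[\hat\Gamma]$ is trivial.

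The key observation is that, for each $\beta\in\Phi^+[\Gamma]$, the pair $\{\beta,-\beta\}$ satisfies $\hat m_{\beta,-\beta}=\infty$ (since $\langle\beta,-\beta\rangle=-\langle\beta,\beta\rangle=-2$ and a root $\beta$ has $\langle\beta,\beta\rangle=2$), so with the notation $\XX^+_\beta=\Phi[\Gamma]\setminus\{-\beta\}$, $\XX^-_\beta=\Phi[\Gamma]\setminus\{\beta\}$, $\YY_\beta=\Phi[\Gamma]\setminus\{\beta,-\beta\}$, we get by Bellingeri--Paris \cite[Lemma~3.3]{BelPar1} the amalgamated product decomposition
\[
\KVA[\Gamma]=A[\hat\Gamma]=A[\hat\Gamma_{\XX^+_\beta}]*_{A[\hat\Gamma_{\YY_\beta}]}A[\hat\Gamma_{\XX^-_\beta}]\,.
\]
Now an element $h$ of the center of $A[\hat\Gamma]$ is in particular central, hence it commutes with $\delta_\beta$, and $\delta_\beta$ is an element of the vertex group $A[\hat\Gamma_{\XX^+_\beta}]$ (indeed $\beta\in\XX^+_\beta$) which is not conjugate into the edge group, since $A[\hat\Gamma_{\YY_\beta}]$ is a parabolic subgroup and $\beta\notin\YY_\beta$, so $\delta_\beta$ has infinite order modulo the edge subgroup. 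A standard Bass--Serre argument then forces $h$ to lie in one of the vertex groups; playing the same game with $\delta_{-\beta}\in A[\hat\Gamma_{\XX^-_\beta}]$ and using that a central element commuting with a non-edge element of each factor must lie in the intersection, we conclude $h\in A[\hat\Gamma_{\XX^+_\beta}]\cap A[\hat\Gamma_{\XX^-_\beta}]=A[\hat\Gamma_{\YY_\beta}]$. (Alternatively, one can invoke Bellingeri--Paris \cite[Lemma~3.6]{BelPar1} directly, observing that $h$ commutes with both $\delta_\beta$ and $\delta_{-\beta}$, as in the proof of Theorem~\ref{thm3_3}.)

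Having shown $h\in A[\hat\Gamma_{\YY_\beta}]$ for every $\beta\in\Phi^+[\Gamma]$, we intersect over all positive roots. Since $\bigcap_{\beta\in\Phi^+[\Gamma]}\YY_\beta=\Phi[\Gamma]\setminus\bigcup_{\beta\in\Phi^+[\Gamma]}\{\beta,-\beta\}=\emptyset$, Equation~(\ref{eq3_1}) gives
\[
h\in\bigcap_{\beta\in\Phi^+[\Gamma]}A[\hat\Gamma_{\YY_\beta}]=A[\hat\Gamma_\emptyset]=\{1\}\,,
\]
so $h=1$ and $Z(\KVA[\Gamma])=\{1\}$, which is what we wanted. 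The main obstacle, such as it is, lies in making the Bass--Serre step precise: one must verify that $\delta_\beta$ is genuinely not conjugate into the edge group $A[\hat\Gamma_{\YY_\beta}]$, which comes down to the standard fact that a standard generator of an Artin group is not conjugate into a proper standard parabolic subgroup not containing it — but since we may instead route the argument through Bellingeri--Paris \cite[Lemma~3.6]{BelPar1} exactly as in Theorem~\ref{thm3_3}, this obstacle essentially evaporates. Alternatively, the whole statement follows at once from Charney--Morris-Wright \cite{ChaMor1}, which computes centralizers (in particular centers) of Artin groups of FC type and, more generally, shows that an Artin group whose Coxeter graph is not ``spherical-type connected'' has trivial center; since $\hat\Gamma$ always contains the infinite-bond pair $\{\beta,-\beta\}$, it is never connected of spherical type, and triviality of $Z(A[\hat\Gamma])$ is immediate.
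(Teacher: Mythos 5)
Your overall skeleton (the amalgamated decomposition of $\KVA[\Gamma]=A[\hat\Gamma]$ along each pair $\{\beta,-\beta\}$, membership of the center in $A[\hat\Gamma_{\YY_\beta}]$, then intersection over $\beta\in\Phi^+[\Gamma]$ via Equation (\ref{eq3_1})) is the same as the paper's, but the step by which you place a central element $h$ in $A[\hat\Gamma_{\YY_\beta}]$ has a genuine gap. Your Bass--Serre argument requires that $\delta_\beta$ is not conjugate into the edge group $A[\hat\Gamma_{\YY_\beta}]$, and the ``standard fact'' you invoke (a standard generator is never conjugate into a standard parabolic not containing it) is false: whenever $\hat m_{\beta,\gamma}$ is odd, the braid relation makes $\delta_\beta$ conjugate to $\delta_\gamma$, and the conjugating element $\delta_\beta\delta_\gamma$ already lies in the vertex group $A[\hat\Gamma_{\XX^+_\beta}]$ while $\delta_\gamma\in A[\hat\Gamma_{\YY_\beta}]$. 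This happens as soon as $\Gamma$ has an odd label; for instance for $\Gamma=A_2$ take $\beta=\alpha_{1,2}$ and $\gamma=\alpha_{2,3}\in\YY_\beta$ with $\hat m_{\beta,\gamma}=3$ (compare $\sigma_1$ conjugate to $\sigma_2$ in $B_3$). Consequently $\delta_\beta$ fixes an edge of the Bass--Serre tree, its fixed subtree is not a single vertex, and commuting with $\delta_\beta$ does not force $h$ into the vertex group. Your parenthetical fallback to \cite[Lemma 3.6]{BelPar1} does not repair this: that lemma is applied in Theorem \ref{thm3_3} to an element satisfying $r_\beta\cdot h=h$, i.e.\ fixed by the automorphism swapping the two factors, which was available there because $h$ centralized $W[\Gamma]$ inside $\VA[\Gamma]$; a central element of $\KVA[\Gamma]$ is not known to satisfy $r_\beta\cdot h=h$, and commuting with $\delta_\beta$ and $\delta_{-\beta}$ is not the hypothesis of that lemma.

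The fix is simpler than what you attempt and is exactly what the paper does: since both factors properly contain the edge group, the classical fact that the center of a nontrivial amalgamated product is contained in the amalgamated subgroup gives at once $Z(\KVA[\Gamma])\subset A[\hat\Gamma_{\YY_\beta}]$ for every $\beta\in\Phi^+[\Gamma]$, after which your intersection argument finishes the proof (the preliminary reduction to connected $\Gamma$ is unnecessary, though harmless). As for your final alternative, the paper does remark that the proposition is a direct consequence of \cite{ChaMor1}, but your paraphrase of that result is incorrect: an Artin group whose Coxeter graph is not connected of spherical type can have nontrivial center (already $A[\Gamma']=\Z^2$ when $\Gamma'$ has two vertices and $m=2$); the relevant statement concerns irreducible, i.e.\ connected, non-spherical Coxeter graphs. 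Since $\hat\Gamma$ need not be connected (it is not, for example, when $\Gamma=A_1\sqcup A_1$), one must argue componentwise, noting that every connected component of $\hat\Gamma$ contains an $\infty$-labelled edge $\{\beta,-\beta\}$ and hence is not of spherical type, and one must also deal with the fact that $\hat\Gamma$ may have infinitely many vertices.
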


\begin{proof}
Let $\beta\in\Phi^+[\Gamma]$.
We set $\XX^+_\beta=\Phi[\Gamma]\setminus\{-\beta\}$, $\XX^-_\beta=\Phi[\Gamma]\setminus\{\beta\}$ and $\YY_\beta=\Phi[\Gamma]\setminus\{\beta,-\beta\}$.
As in the proof of Theorem \ref{thm3_3}, by Bellingeri--Paris \cite[Lemma 3.3]{BelPar1}, from the equalities $\hat m_{\beta,-\beta}=\infty$, $\XX^+_\beta\cup\XX^-_\beta=\Phi[\Gamma]$ and $\XX^+_\beta\cap\XX^-_\beta=\YY_\beta$ it follows that:
\[
\KVA[\Gamma]=A[\hat\Gamma]=A[\hat\Gamma_{\XX^+_\beta}]*_{A[\hat\Gamma_{\YY_\beta}]} A[\hat\Gamma_{\XX^-_\beta}]\,.
\]
It is well-known that the center of an amalgamated product is a subgroup of the amalgam, hence $Z(\KVA[\Gamma])\subset Z(A[\hat\Gamma_{\YY_\beta}])$.
Since we know from the proof of Theorem \ref{thm3_3} that $\cap_{\beta\in\Phi^+[\Gamma]}A[\hat\Gamma_{\YY_\beta}]=\{1\}$, we conclude that $Z(\KVA[\Gamma])=\{1\}$.
\end{proof}

%%%%%%%%%%%

\section{Virtual Artin groups of spherical type and of affine type}\label{sec4}

Let $\Gamma$ be a finite Coxeter graph.
We say that $\Gamma$ (or $A[\Gamma]$ or $W[\Gamma]$ or $\VA[\Gamma]$) is of \emph{spherical type} if $W[\Gamma]$ is finite.
If $\Gamma_1,\dots,\Gamma_p$ are the connected components of $\Gamma$, then $W[\Gamma] = W[\Gamma_1] \times\cdots\times W[\Gamma_p]$, hence $\Gamma$ is of spherical type if and only if all its connected components are of spherical type.
The connected Coxeter graphs of spherical type have been classified by Coxeter himself \cite{Coxet1}.
These are the Coxeter graphs $A_n$ ($n\ge 1$), $B_n$ ($n\ge 2$), $D_n$ ($n\ge 4$), $E_n$ ($n\in\{6,7,8\}$), $F_4$, $H_3$, $H_4$ and $I_2(m)$ $(m\ge 5$) shown in Figure \ref{fig4_1}.

\begin{figure}[ht!]
\begin{center}
$A_n$:\quad \parbox[c]{4cm}{\includegraphics[width=4cm]{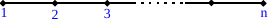}}\hskip 1 cm
$B_n$:\quad \parbox[c]{4cm}{\includegraphics[width=4cm]{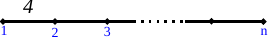}}

\bigskip
$D_n$:\quad \parbox[c]{4cm}{\includegraphics[width=4cm]{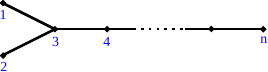}}\hskip 1 cm
$E_6$:\quad \parbox[c]{3.2cm}{\includegraphics[width=3.2cm]{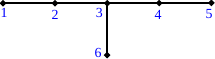}}

\bigskip
$E_7$:\quad \parbox[c]{4cm}{\includegraphics[width=4cm]{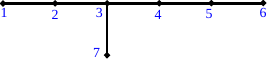}}\hskip 1 cm
$E_8$:\quad \parbox[c]{4.8cm}{\includegraphics[width=4.8cm]{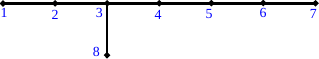}}

\bigskip
$F_4$:\quad \parbox[c]{2.4cm}{\includegraphics[width=2.4cm]{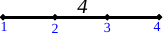}}\hskip 1 cm
$H_3$:\quad \parbox[c]{1.6cm}{\includegraphics[width=1.6cm]{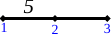}}\hskip 1 cm
$H_4$:\quad \parbox[c]{2.4cm}{\includegraphics[width=2.4cm]{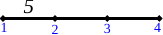}}

\bigskip
$I_2(m)$:\quad \parbox[c]{0.8cm}{\includegraphics[width=0.8cm]{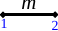}}

\caption{Connected Coxeter graphs of spherical type}\label{fig4_1}
\end{center}
\end{figure}

Let $\Gamma$ be a finite Coxeter graph and let $M=(m_{s,t})_{s,t\in S}$ be its Coxeter matrix.
Let $\Pi=\{\alpha_s\mid s\in S\}$ be the set of simple roots, let $V$ be the real vector space having $\Pi$ as a basis and let $\langle\cdot,\cdot\rangle:V\times V\to\R$ be the canonical bilinear form of $W[\Gamma]$.
We know by Coxeter \cite{Coxet2,Coxet1} that $W[\Gamma]$ is finite if and only if $\langle\cdot,\cdot\rangle$ is positive definite.
We say that $\Gamma$ (or $A[\Gamma]$ or $W[\Gamma]$ or $\VA[\Gamma]$) is of \emph{affine type} if $\langle\cdot,\cdot\rangle$ is positive but not positive definite.
It is easily seen that $\Gamma$ is of affine type if and only if all its connected components are of affine type or of spherical type and at least one of the connected components is of affine type.
On the other hand a classification of the connected Coxeter graphs of affine type can be found in Bourbaki \cite{Bourb1}.
These are the Coxeter graphs $\tilde A_n$ ($n\ge 1$), $\tilde B_n$ ($n\ge 3$), $\tilde C_n$ ($n\ge 2$), $\tilde D_n$ ($n\ge 4$), $\tilde E_n$ ($n\in\{6,7,8\}$), $\tilde F_4$ and $\tilde G_2$ shown in Figure \ref{fig4_2}.

\begin{figure}[ht!]
\begin{center}
$\tilde A_1$:\quad \parbox[c]{0.9cm}{\includegraphics[width=0.9cm]{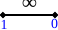}}\hskip 0.5 cm
$\tilde A_n$:\quad \parbox[c]{4cm}{\includegraphics[width=4cm]{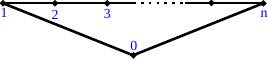}}\hskip 1 cm
$\tilde B_n$:\quad \parbox[c]{4.8cm}{\includegraphics[width=4.8cm]{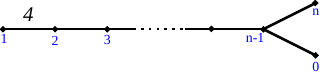}}

\bigskip
$\tilde C_n$:\quad \parbox[c]{4.8cm}{\includegraphics[width=4.8cm]{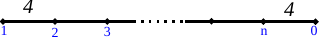}}\hskip 1 cm
$\tilde D_n$:\quad \parbox[c]{4.8cm}{\includegraphics[width=4.8cm]{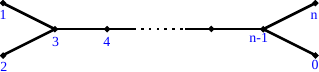}}

\bigskip
$\tilde E_6$:\quad \parbox[c]{3.2cm}{\includegraphics[width=3.2cm]{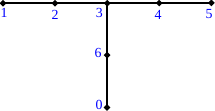}}\hskip 1 cm
$\tilde E_7$:\quad \parbox[c]{4.8cm}{\includegraphics[width=4.8cm]{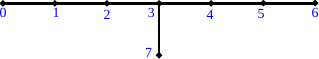}}

\bigskip
$\tilde E_8$:\quad \parbox[c]{5.6cm}{\includegraphics[width=5.6cm]{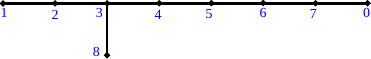}}\hskip 1 cm
$\tilde F_4$:\quad \parbox[c]{3.2cm}{\includegraphics[width=3.2cm]{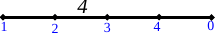}}

\bigskip
$\tilde G_2$:\quad \parbox[c]{1.6cm}{\includegraphics[width=1.6cm]{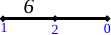}}

\caption{Connected Coxeter graphs of affine type}\label{fig4_2}
\end{center}
\end{figure}

Artin groups of spherical type are well understood since Artin groups themselves were introduced by Brieskorn \cite{Bries1}, Deligne \cite{Delig1} and Brieskorn--Saito \cite{BriSai1} in the early 1970s.
One would have expected that the next class of Artin groups to be well understood would be the class  of Artin groups of affine type, but this was not the case.
It was not until the last decade, with the development of the dual structures and the work of Digne \cite{Digne1,Digne2}, McCammond \cite{McCam1}, McCammond--Sulway \cite{McCSul1} and Paolini--Salvetti \cite{PaoSal1}, that elementary problems on these groups such as determining the center, determining the torsion part, or solving the word problem, have been solved, as well as the famous $K(\pi,1)$ conjecture (see Section \ref{sec6}).
In contrast, virtual Artin groups of affine type are treated more or less in the same way as those of spherical type.
Their most important common point is the following Theorem \ref{thm4_1} which allows us to understand the free of infinity subsets of $\Phi[\Gamma]$, viewed as the set of vertices of $\hat\Gamma$.

Let $\Gamma$ be a Coxeter graph and let $M=(m_{s,t})_{s,t\in S}$ be its Coxeter matrix.
We denote by $\PP_\sph(S)$ the set of subsets $X\subset S$ such that $\Gamma_X$ is of spherical type.
Then the \emph{spherical dimension} of $A[\Gamma]$ is defined to be $n_\sph(A[\Gamma])=\max\{|X|\mid X\in\PP_\sph(S)\}$.
We know by Charney--Davis \cite{ChaDav1} that, if $A[\Gamma]$ satisfies the $K(\pi,1)$ conjecture, then $n_\sph(A[\Gamma])$ is equal to the cohomological dimension of $A[\Gamma]$, and it is conjectured that all Artin groups satisfy the $K(\pi,1)$ conjecture.
This conjecture will be recalled in the beginning of Section \ref{sec6}.
Note however that, by Deligne \cite{Delig1} and Paolini--Salvetti \cite{PaoSal1}, $A[\Gamma]$ satisfies the $K(\pi,1)$ conjecture if $\Gamma$ is of spherical type or of affine type.

\begin{expl}
Suppose that $\Gamma$ is of spherical type or of affine type.
Let  $\langle\cdot,\cdot\rangle:V\times V\to\R$ be the canonical bilinear form of $W[\Gamma]$.
Then $n_\sph(A[\Gamma])$ is equal to the rank of $\langle\cdot,\cdot\rangle$.
In particular, if $\Gamma$ is of spherical type, then $n_\sph(A[\Gamma])$ is equal to the number of vertices of $\Gamma$, and if $\Gamma$ is connected of affine type, then $n_\sph(A[\Gamma])$ is equal to the number of vertices of $\Gamma$ minus $1$.
\end{expl}

Let $\Gamma$ be a Coxeter graph and let $M=(m_{s,t})_{s,t\in S}$ be its Coxeter matrix.
A subset $X\subset S$ is called \emph{free of infinity} if $m_{s,t}\neq\infty$ for all $s,t\in X$.
We denote by $\PP_{<\infty}(S)$ the set of free of infinity subsets of $S$.
Note that $\PP_\sph(S)\subset\PP_{<\infty}(S)$, hence:
\[
n_\sph(A[\Gamma])=\max\{n_\sph(A[\Gamma_X])\mid X\in\PP_{<\infty}(S)\}\,.
\]

The following theorem is the main result of the present section.
Recall the general principle which says that, if we understand $A[\Gamma_X]$ for all $X\in\PP_{<\infty}(S)$, then we can understand $A[\Gamma]$ (see Godelle--Paris \cite{GodPar1}).
Since we understand the Artin groups of spherical type and the Artin groups of affine type, this theorem enables to apply this principle and therefore to understand $A[\hat\Gamma]=\KVA[\Gamma]$ when $\Gamma$ is of spherical type or of affine type.
Since $\VA[\Gamma]=\KVA[\Gamma]\rtimes W[\Gamma]$ and Coxeter groups are well-understood, we can therefore deduce results on $\VA[\Gamma]$ itself.
We will apply this strategy in Section \ref{sec5} and Section \ref{sec6} to prove that $\VA[\Gamma]$ has a solution to the word problem and to compute the virtual cohomological dimension of $\VA[\Gamma]$ when $\Gamma$ is of spherical type or of affine type.

\begin{thm}\label{thm4_1}
Let $\Gamma $ be a Coxeter graph of spherical type or of affine type, and let $\XX$ be a free of infinity subset of $\Phi[\Gamma]$.
Then $\XX$ is finite, $\hat\Gamma_\XX$ is of spherical type or of affine type, and $n_\sph(A[\hat\Gamma_\XX])\le n_\sph(A[\Gamma])$.
\end{thm}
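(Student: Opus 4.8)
The plan is to pull everything back to the canonical bilinear form $\langle\cdot,\cdot\rangle\colon V\times V\to\R$ of $W[\Gamma]$, which is \emph{positive} (i.e.\ positive semi-definite) precisely because $\Gamma$ is of spherical type or of affine type, and which lives on a \emph{finite-dimensional} space since such a $\Gamma$ is finite. First I would record two elementary facts about roots. Since the canonical representation preserves $\langle\cdot,\cdot\rangle$, every root $\beta=w(\alpha_s)\in\Phi[\Gamma]$ satisfies $\langle\beta,\beta\rangle=\langle\alpha_s,\alpha_s\rangle=-2\cos(\pi/m_{s,s})=2$; and if $\beta,\gamma\in\Phi[\Gamma]$ are distinct with $\hat m_{\beta,\gamma}\neq\infty$, then choosing $w\in W[\Gamma]$ and $s,t\in S$ with $\beta=w(\alpha_s)$, $\gamma=w(\alpha_t)$ and $m_{s,t}=\hat m_{\beta,\gamma}\ge 2$ gives $\langle\beta,\gamma\rangle=\langle\alpha_s,\alpha_t\rangle=-2\cos(\pi/\hat m_{\beta,\gamma})\in(-2,0]$. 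Consequently, for a free of infinity subset $\XX$ of $\Phi[\Gamma]$, the linear map $f\colon V_\XX\to V$ sending, for each $\beta\in\XX$, the simple root of $\hat\Gamma_\XX$ indexed by $\beta$ to the vector $\beta\in V$ pulls back $\langle\cdot,\cdot\rangle$ to exactly the canonical bilinear form of $\hat\Gamma_\XX$: the off-diagonal entries match by the second computation (using that no $\hat m_{\beta,\gamma}$ with $\beta,\gamma\in\XX$ is $\infty$) and the diagonal ones by the first (using $\hat m_{\beta,\beta}=1$). Here $V_\XX$ denotes the real vector space underlying the canonical bilinear form of $\hat\Gamma_\XX$.

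Next I would prove that $\XX$ is finite. Let $R$ be the radical of $\langle\cdot,\cdot\rangle$ on $V$ and put $\bar V=V/R$, which is a finite-dimensional Euclidean space for the induced positive definite form. Since $\langle\beta,\beta\rangle=2\neq 0$ and, for a positive semi-definite form, $\langle v,v\rangle=0$ forces $v\in R$, no root lies in $R$, so each $\beta\in\XX$ has a nonzero image $\bar\beta\in\bar V$. Moreover $\beta\mapsto\bar\beta$ is injective on $\XX$: if $\beta\neq\gamma$ in $\XX$ had $\beta-\gamma\in R$ then $0=\langle\beta-\gamma,\beta-\gamma\rangle=4-2\langle\beta,\gamma\rangle$, forcing $\langle\beta,\gamma\rangle=2$, contradicting $\langle\beta,\gamma\rangle\le 0$. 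Now all the $\bar\beta$ lie on the sphere $\{v\in\bar V\mid\langle v,v\rangle=2\}$, which is compact, whereas $\|\bar\beta-\bar\gamma\|^2=4-2\langle\beta,\gamma\rangle\ge 4$ for distinct $\beta,\gamma\in\XX$. An infinite subset of a compact metric space contains distinct points arbitrarily close together, so $\XX$ must be finite. (Alternatively one could invoke the classical bound that a set of nonzero vectors in a Euclidean space of dimension $m$ with pairwise non-positive inner products has at most $2m$ elements, which would also give $|\XX|\le 2\dim V$.)

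Once $\XX$ is finite, $\hat\Gamma_\XX$ is a finite Coxeter graph whose canonical bilinear form is, by the identification above, the pullback $f^{*}\langle\cdot,\cdot\rangle$. A pullback of a positive semi-definite form is positive semi-definite, so this form is positive; by the definitions recalled in this section, that is precisely what it means for $\hat\Gamma_\XX$ to be of spherical type (if the form is positive definite) or of affine type (otherwise). Finally, by the Example preceding the theorem, $n_\sph(A[\hat\Gamma_\XX])$ equals the rank of the canonical bilinear form of $\hat\Gamma_\XX$ and $n_\sph(A[\Gamma])$ equals the rank of $\langle\cdot,\cdot\rangle$ on $V$; since the rank of a pullback $f^{*}q$ never exceeds that of $q$ (one has $\rank(f^{\top}Mf)\le\rank M$), this yields $n_\sph(A[\hat\Gamma_\XX])\le n_\sph(A[\Gamma])$ and completes the proof.

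I expect the finiteness of $\XX$ to be the only point that is not pure bookkeeping; once the canonical bilinear form of $\hat\Gamma_\XX$ has been recognized as a pullback of the positive form on $V$, both the type statement and the dimension inequality are immediate from the elementary linear algebra of positive semi-definite forms. The compactness argument is the route I would take for finiteness; the alternative via the $2m$-bound is quantitative but unnecessary for the statement.
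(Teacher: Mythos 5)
Your proposal is correct, and its backbone is the same as the paper's: you identify the canonical bilinear form of $\hat\Gamma_\XX$ with the Gram matrix of the roots in $\XX$ under the canonical form $\langle\cdot,\cdot\rangle$ of $W[\Gamma]$ (this is exactly the paper's Claim~2, that $\langle\!\langle\hat E_\beta,\hat E_\gamma\rangle\!\rangle=\langle\beta,\gamma\rangle$ when $\hat m_{\beta,\gamma}\neq\infty$), deduce positivity by pulling back a positive form, and then convert ranks into spherical dimensions via the Example preceding the theorem, just as the paper does. Where you diverge is in the two technical steps. For finiteness of $\XX$, the paper applies Bourbaki's lemma on indecomposable positive matrices (its Claim~1) to the connected components of $\hat\Gamma_\XX$ to get the uniform quantitative bound $|\XX|\le 2\,n_\sph(A[\Gamma])$ for finite free of infinity subsets, and then observes that such a uniform bound rules out infinite ones; you instead pass to the quotient of $V$ by the radical, note that the images of the roots lie on a compact sphere and satisfy $\|\bar\beta-\bar\gamma\|^2=4-2\langle\beta,\gamma\rangle\ge 4$, and conclude by compactness (your parenthetical $2\dim V$ bound would recover a quantitative statement comparable to the paper's). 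For the inequality $n_\sph(A[\hat\Gamma_\XX])\le n_\sph(A[\Gamma])$, the paper again uses Claim~1 to extract a subset $\YY\subset\XX$ on which the form is positive definite with $|\YY|=\rank(\langle\!\langle\cdot,\cdot\rangle\!\rangle_\XX)$, and then proves $\YY$ is linearly independent in $V$; you shortcut this with the observation that the rank of a pullback, $\rank(F^{\top}BF)$, never exceeds $\rank(B)$. Both routes are sound; the paper's buys the explicit cardinality bound $|\XX|\le 2\,n_\sph(A[\Gamma])$ as a by-product, while yours is more elementary and avoids invoking Bourbaki's lemma altogether.
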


\begin{proof}
A symmetric matrix $A=(a_{i,j})_{1\le i,j\le n}$ is called \emph{decomposable} if there exists a non-trivial partition of the set of indices, $\{1,\dots,n\}=I\sqcup J$, such that $a_{i,j}=a_{j,i}=0$ for all $i\in I$ and $j\in J$.
We say that $A$ is \emph{indecomposable} otherwise.
The following claim is well-known and can be found for example in Bourbaki \cite[Lemme 4, Page 78]{Bourb1}.

{\it Claim 1.}
Let $A=(a_{i,j})_{1\le i,j\le n}$ be a positive symmetric matrix but not positive definite.
Suppose that $A$ is indecomposable, that $a_{i,i}>0$ for all $i\in\{1,\dots,n\}$, and that $a_{i,j}\le 0$ for all $i,j\in\{1,\dots,n\}$, $i\neq j$.
\begin{itemize}
\item[(1)]
The rank of $A$ is $n-1$.
\item[(2)]
If $X$ is a proper subset of $\{1,\dots,n\}$, then the matrix $A_X=(a_{i,j})_{i,j\in X}$ is positive definite.
\end{itemize}

Now take a Coxeter graph $\Gamma$ of spherical type or of affine type, and denote by $M=(m_{s,t})_{s,t\in S}$ its Coxeter matrix.
Let $\hat\Pi=\{\hat E_\beta\mid\beta\in\Phi[\Gamma]\}$ be a set in one-to-one correspondence with $\Phi[\Gamma]$, and let $\hat V=\oplus_{\beta\in\Phi[\Gamma]}\R\,\hat E_\beta$ be a real vector space having $\hat\Pi$ as a basis. 
Define the symmetric bilinear form $\langle\!\langle\cdot,\cdot\rangle\!\rangle:\hat V\times\hat V\to\R$ by $\langle\!\langle \hat E_\beta,\hat E_\gamma\rangle\!\rangle=-2\cos(\pi/\hat m_{\beta,\gamma})$ if $\hat m_{\beta,\gamma}\neq\infty$ and $\langle\!\langle \hat E_\beta,\hat E_\gamma\rangle\!\rangle=-2$ if $\hat m_{\beta,\gamma}=\infty$.

{\it Claim 2.}
Let $\beta,\gamma\in\Phi[\Gamma]$ such that $\hat m_{\beta,\gamma}\neq\infty$.
Then $\langle\!\langle\hat E_\beta,\hat E_\gamma\rangle\!\rangle=\langle\beta,\gamma\rangle$.

{\it Proof of Claim 2.}
If $\beta=\gamma$, then $\langle\!\langle\hat E_\beta,\hat E_\gamma\rangle\!\rangle=\langle\beta,\gamma\rangle=2$.
Suppose $\beta\neq \gamma$.
By definition there exist $w\in W$ and $s,t\in S$ such that $m_{s,t}\neq\infty$, $\beta=w(\alpha_s)$, $\gamma=w(\alpha_t)$ and $\hat m_{\beta,\gamma}=m_{s,t}$.
Then:
\[
\langle\beta,\gamma\rangle=
\langle w(\alpha_s),w(\alpha_t)\rangle=
\langle\alpha_s,\alpha_t\rangle=
-2\cos(\pi/m_{s,t})=
-2\cos(\pi/\hat m_{\beta,\gamma})=
\langle\!\langle\hat E_\beta,\hat E_\gamma\rangle\!\rangle\,.
\]
This completes the proof of Claim 2.

Now take a free of infinity finite subset $\XX$ of $\Phi[\Gamma]$.
We turn to show that $A[\hat\Gamma_\XX]$ is of spherical type or of affine type, that $|\XX|\le 2\,n_\sph(A[\Gamma])$, and that $n_\sph(A[\hat\Gamma_\XX])\le n_\sph(A[\Gamma])$. The existence of this uniform bound on the cardinality of $\XX$ in particular implies that there exists no free of infinity infinite subset of $\Phi[\Gamma]$.
This will prove the theorem.
We denote by $\hat V_\XX$ the linear subspace of $\hat V$ spanned by $\{\hat E_\beta \mid \beta\in\XX\}$ and by $\langle\!\langle\cdot,\cdot\rangle\!\rangle_\XX:\hat V_\XX\times\hat V_\XX\to \R$ the restriction of $\langle\!\langle\cdot,\cdot\rangle\!\rangle$ to $\hat V_\XX$.

Let $\hat u\in \hat V_\XX$.
We write $\hat u=\sum_{\beta\in\XX}\lambda_\beta\hat E_\beta$ with $\lambda_\beta\in\R$ for all $\beta\in\XX$.
Let $u=\sum_{\beta\in\XX}\lambda_\beta \beta \in V$.
Then, by Claim 2,
\[
\langle\!\langle\hat u,\hat u\rangle\!\rangle_\XX =
\sum_{\beta\in\XX}\sum_{\gamma\in\XX}\lambda_\beta\lambda_\gamma\langle\!\langle\hat E_\beta,\hat E_\gamma\rangle\!\rangle =
\sum_{\beta\in\XX}\sum_{\gamma\in\XX}\lambda_\beta\lambda_\gamma\langle\beta,\gamma\rangle=
\langle u,u\rangle\ge0\,.
\]
This shows that $\langle\!\langle\cdot,\cdot\rangle\!\rangle_\XX$ is positive, hence $A[\hat\Gamma_\XX]$ is of spherical type or of affine type. 

Let $\hat\Gamma_1,\dots,\hat\Gamma_m$ be the connected components of $\hat\Gamma_\XX$.
For each $k\in\{1,\dots,m\}$ we denote by $\XX_k$ the set of vertices of $\hat\Gamma_k$ and we consider the matrix $A_k=(\hat m_{\beta,\gamma})_{\beta,\gamma\in\XX_k}$.
The matrix $A_k$ is symmetric, positive and indecomposable, hence, by Claim 1, the rank of $A_k$ is either $|\XX_k|$ or $|\XX_k|-1$.
In the second case we necessarily have $|\XX_k|-1\ge 1$.
Thus, we have $m\le\rank(\langle\!\langle\cdot,\cdot\rangle\!\rangle_\XX)$, and therefore:
\[
|\XX|=
\sum_{k=1}^m |\XX_k| =
m+\sum_{k=1}^m(|\XX_k|-1) \le
m+\rank(\langle\!\langle\cdot,\cdot\rangle\!\rangle_\XX)\le
2\,\rank(\langle\!\langle\cdot,\cdot\rangle\!\rangle_\XX)\,.
\]
Since both $\hat\Gamma_\XX$ and $\Gamma$ are of spherical type or of affine type, we have $n_\sph(A[\hat\Gamma_\XX])=\rank(\langle\!\langle\cdot,\cdot\rangle\!\rangle_\XX)$ and $n_\sph(A[\Gamma])=\rank(\langle\cdot,\cdot\rangle)$, hence it remains to show that $\rank(\langle\!\langle\cdot,\cdot\rangle\!\rangle_\XX)\le\rank(\langle\cdot,\cdot\rangle)$.

By applying again Claim 1 to the matrices $A_k$ we see that there exists a subset $\YY\subset\XX$ such that $|\YY|=\rank(\langle\!\langle\cdot,\cdot\rangle\!\rangle_\XX)$ and $\langle\!\langle\cdot,\cdot\rangle\!\rangle_\YY$ is positive definite. 
Let $\{\lambda_\beta\}_{\beta\in\YY}$ be a collection of real numbers such that $\sum_{\beta\in\YY} \lambda_\beta \beta=0$ (in $V$).
Then, by Claim 2,
\begin{gather*}
0=\langle 0,0\rangle=
\langle\sum_{\beta\in\YY} \lambda_\beta \beta,\sum_{\beta\in\YY} \lambda_\beta \beta\rangle=
\sum_{\beta\in\YY}\sum_{\gamma\in\YY}\lambda_\beta\lambda_\gamma\langle\beta,\gamma\rangle=\\
\sum_{\beta\in\YY}\sum_{\gamma\in\YY}\lambda_\beta\lambda_\gamma\langle\!\langle\hat E_\beta,\hat E_\gamma\rangle\!\rangle=
\langle\!\langle\sum_{\beta\in\YY}\lambda_\beta\hat E_\beta,\sum_{\beta\in\YY}\lambda_\beta\hat E_\beta\rangle\!\rangle_\YY\,.
\end{gather*}
Since $\langle\!\langle\cdot,\cdot\rangle\!\rangle_\YY$ is positive definite and $\{\hat E_\beta\mid\beta\in\YY\}$ is a basis of $\hat V_\YY$, it follows that $\lambda_\beta=0$ for all $\beta\in\YY$, hence $\YY$ is a linearly independent subset of $V$.
Let $V_\YY$ be the linear subspace of $V$ spanned by $\YY$ and let $\langle\cdot,\cdot\rangle_\YY:V_\YY\times V_\YY\to\R$ be the restriction of $\langle\cdot,\cdot\rangle$ to $V_\YY$.
The matrix of $\langle\cdot,\cdot\rangle_\YY$ in the basis $\YY$ is the same as the matrix of $\langle\!\langle\cdot,\cdot\rangle\!\rangle_\YY$ in the basis $\{\hat E_\beta\mid\beta\in\YY\}$ and $\langle\!\langle\cdot,\cdot\rangle\!\rangle_\YY$ is positive definite, hence $\langle\cdot,\cdot\rangle_\YY$ is positive definite.
So, $\rank(\langle\!\langle\cdot,\cdot\rangle\!\rangle_\XX)=|\YY| = \dim(V_\YY)=\rank(\langle\cdot,\cdot\rangle_\YY) \le \rank(\langle\cdot,\cdot\rangle)$.
\end{proof}

%%%%%%%%%%

\section{Word problem}\label{sec5}

\begin{thm}\label{thm5_1}
Let $\Gamma$ be a Coxeter graph of spherical type or of affine type.
Then $\VA[\Gamma]$ has a solution to the word problem.
\end{thm}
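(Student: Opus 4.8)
The plan is to reduce the word problem of $\VA[\Gamma]$ to those of the Coxeter group $W[\Gamma]$ and of the Artin group $A[\hat\Gamma]=\KVA[\Gamma]$, using the semidirect product decomposition $\VA[\Gamma]=\KVA[\Gamma]\rtimes W[\Gamma]$ of Proposition~\ref{prop2_1} together with the identification $\KVA[\Gamma]=A[\hat\Gamma]$ of Theorem~\ref{thm2_3}. Given a word $g$ over $\SS\sqcup\TT$, the first step is to rewrite it algorithmically in the normal form $g=\delta_{\beta_1}^{\varepsilon_1}\cdots\delta_{\beta_p}^{\varepsilon_p}\,\iota_W(w)$ produced in the first paragraph of the proof of Theorem~\ref{thm2_3}: one groups the maximal blocks of $\tau$-letters into words $w_0,w_1,\dots,w_p$ over $S$ and, each time a letter $\sigma_{s_i}^{\varepsilon_i}$ is read, records the root $\beta_i=(w_0w_1\cdots w_{i-1})(\alpha_{s_i})$, and finally sets $w=w_0w_1\cdots w_p$. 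This is effective because, $\Gamma$ being finite and of spherical or affine type, one can effectively compute the image of a simple root under an element of $W[\Gamma]$ given as a word over $S$, via the canonical representation $W[\Gamma]\hookrightarrow\GL(V)$, and decide when two such roots coincide. Since the decomposition is a semidirect product, $g=1$ in $\VA[\Gamma]$ if and only if $w=1$ in $W[\Gamma]$ and $\delta_{\beta_1}^{\varepsilon_1}\cdots\delta_{\beta_p}^{\varepsilon_p}=1$ in $A[\hat\Gamma]$. The condition $w=1$ is decidable because the canonical representation of $W[\Gamma]$ is faithful and computable, so everything comes down to the word problem in $A[\hat\Gamma]$.

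The word $\delta_{\beta_1}^{\varepsilon_1}\cdots\delta_{\beta_p}^{\varepsilon_p}$ involves only the finite subset $\XX=\{\beta_1,\dots,\beta_p\}$ of $\Phi[\Gamma]$, and by Van der Lek \cite{Lek1} the canonical map $A[\hat\Gamma_\XX]\to A[\hat\Gamma]$ is injective; hence it suffices to solve the word problem in $A[\hat\Gamma_\XX]$ for an arbitrary finite $\XX\subset\Phi[\Gamma]$. I would proceed by induction on $|\XX|$. If $\hat\Gamma_\XX$ is free of infinity, then by Theorem~\ref{thm4_1} it is of spherical type or of affine type, and $A[\hat\Gamma_\XX]$ has a solution to the word problem by Brieskorn--Saito \cite{BriSai1} and Deligne \cite{Delig1} in the spherical case and by Digne \cite{Digne1,Digne2}, McCammond--Sulway \cite{McCSul1} and Paolini--Salvetti \cite{PaoSal1} in the affine case. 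If $\hat\Gamma_\XX$ is not free of infinity, choose $\beta,\gamma\in\XX$ with $\hat m_{\beta,\gamma}=\infty$; exactly as in the proof of Theorem~\ref{thm3_3}, using Bellingeri--Paris \cite[Lemma~3.3]{BelPar1}, one obtains the amalgamated product decomposition
\[
A[\hat\Gamma_\XX]=A[\hat\Gamma_{\XX\setminus\{\gamma\}}]\ast_{A[\hat\Gamma_{\XX\setminus\{\beta,\gamma\}}]}A[\hat\Gamma_{\XX\setminus\{\beta\}}]\,,
\]
in which the two factors and the amalgamated subgroup have strictly fewer vertices.

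Running this induction cleanly is where the real work lies, because deciding the word problem in an amalgamated product $A\ast_CB$ requires not just the word problems of $A$ and $B$ but also an effective handling of the parabolic subgroup $C$: deciding membership of a given element in $C$ and rewriting it over a fixed generating set of $C$. I would therefore strengthen the inductive statement to assert that $A[\hat\Gamma_\XX]$ has a solution to the word problem together with a solution to the membership problem for each standard parabolic subgroup $A[\hat\Gamma_\YY]$, $\YY\subset\XX$; this is precisely the form of statement established by Godelle--Paris \cite{GodPar1}, and is what makes the ``general principle'' recalled in the introduction operative here. The base case rests again on \cite{BriSai1,Delig1} for spherical type, where parabolic subgroups are accessible through Garside theory, and on \cite{Digne1,Digne2,McCSul1,PaoSal1} for affine type, while the inductive step is the standard behaviour of the word and membership problems under amalgamation along a common parabolic. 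The principal obstacle is exactly this last input: one needs an effective description of parabolic subgroups, and in particular of membership, in Artin groups of affine type (the spherical case being classical), together with a check that the Godelle--Paris reduction applies to the family $\{\hat\Gamma_\XX\}$ supplied by Theorem~\ref{thm4_1}. Combined with the semidirect product reduction of the first paragraph, this yields a solution to the word problem in $\VA[\Gamma]$.
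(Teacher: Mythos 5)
Your overall strategy follows the paper's: reduce to $\KVA[\Gamma]$ via the semidirect product, collect the finitely many roots occurring in the rewritten word, and solve the word problem in $A[\hat\Gamma_\XX]$ by combining Theorem \ref{thm4_1}, the known solutions for spherical and affine Artin groups, and the Godelle--Paris reduction to free of infinity parabolic subgroups. But there is a genuine gap: you never explain how to compute the Coxeter matrix $\hat M_\XX$, that is, how to decide for two given roots $\beta,\gamma\in\XX$ whether $\hat m_{\beta,\gamma}$ is finite and, if so, what its value is. Your induction branches on whether $\hat\Gamma_\XX$ is free of infinity, and your base case must hand an explicit Coxeter graph to the algorithms of Deligne, Brieskorn--Saito and McCammond--Sulway, so this decision is indispensable. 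The effectivity remark you do make (computing $w(\alpha_s)$ in the canonical representation and testing equality of roots) only produces the set $\XX$ and the word $\hat w$, not the labels. The difficulty is that $\hat m_{\beta,\gamma}$ is defined by the \emph{existence} of $w\in W[\Gamma]$ and $s,t\in S$ with $\beta=w(\alpha_s)$, $\gamma=w(\alpha_t)$ and $m_{s,t}\neq\infty$, a search over the group $W[\Gamma]$, which is infinite in the affine case; the computable quantity $\langle\beta,\gamma\rangle$ pins down $m_{s,t}$ when such a $w$ exists, but does not decide existence. (Already in type $B_2$ there are two orthogonal positive roots $\beta,\gamma$ with $\langle\beta,\gamma\rangle=0$ and yet $\hat m_{\beta,\gamma}=\infty$, since the only rank-two standard pair has label $4$.)

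This computability is precisely the content of Proposition \ref{prop5_2}(ii), and it is the real work of Section \ref{sec5}: Claim 1 there, via Baumeister--Dyer--Stump--Wegener \cite{BaDyStWe1}, translates the existence of such a $w$ into the conjugacy of $r_\beta r_\gamma$ with $st$ in $W[\Gamma]$, and Claim 2 then invokes Krammer's algorithm \cite{Kramm1} for the conjugacy problem in Coxeter groups to decide this. Without that step your algorithm cannot determine which branch of your induction applies, nor which presentation to feed to the spherical/affine word-problem solvers. Your secondary worry, that an amalgamated-product induction needs effective membership in parabolic subgroups, is sidestepped in the paper by quoting the Godelle--Paris criterion in the form that only requires solvable word problems for the groups $A[\hat\Gamma_\YY]$ with $\YY\subset\XX$ free of infinity --- which is how you should use it as well --- but even that citation presupposes that $\hat M_\XX$ has been computed.
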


The proof of Theorem \ref{thm5_1} relies on the following proposition which we state and prove in a general framework, without assuming that the Coxeter graph is of spherical type or of affine type.

\begin{prop}\label{prop5_2}
Let $\Gamma$ be a finite Coxeter graph.
Let $\SS\sqcup\TT$ be the standard generating set of $\VA[\Gamma]$, where $\SS=\{\sigma_s\mid s\in S\}$ and $\TT=\{\tau_s\mid s\in S\}$.
There exists an algorithm which, given a word $w$ on $(\SS\sqcup\TT)^{\pm 1}$ which represents an element $g\in\KVA[\Gamma]$, determines:
\begin{itemize}
\item[(i)]
a finite subset $\XX\subset\Phi[\Gamma]$ such that $g\in A[\hat\Gamma_\XX]$,
\item[(ii)]
the Coxeter matrix $\hat M_\XX$ of $\hat\Gamma_\XX$,
\item[(iii)]
a word $\hat w$ over $\{\delta_\beta\mid\beta\in\XX\}^{\pm 1}$ which represents $g$.
\end{itemize}
\end{prop}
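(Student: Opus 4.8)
The plan is to make the rewriting from the proof of Theorem~\ref{thm2_3} effective. First I would normalise the input word $w$ over $(\SS\sqcup\TT)^{\pm1}$: using $\tau_s^2=1$ we may delete all inverses among the $\TT$-letters, so that $w$ reads as $u_0\,\sigma_{s_1}^{\varepsilon_1}\,u_1\cdots\sigma_{s_p}^{\varepsilon_p}\,u_p$, where each $u_i$ is a (possibly empty) word on $\TT$, hence represents $\iota_W(w_i)$ for an explicitly given word $w_i$ over $S$. Since $\Gamma$ is finite, $W[\Gamma]$ is a finitely generated Coxeter group, so it has a solvable word problem, and its canonical linear representation $W[\Gamma]\hookrightarrow\GL(V)$ is explicitly computable; hence I can compute the elements $v_i=w_0w_1\cdots w_{i-1}\in W[\Gamma]$ and the roots $\beta_i=v_i(\alpha_{s_i})\in\Phi[\Gamma]$ as explicit vectors of $V$. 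As in the proof of Theorem~\ref{thm2_3}, $g=\delta_{\beta_1}^{\varepsilon_1}\cdots\delta_{\beta_p}^{\varepsilon_p}\,\iota_W(w_0\cdots w_p)$ and $\pi_K(g)=w_0\cdots w_p=1$ because $g\in\KVA[\Gamma]$ (this can moreover be verified, using the word problem in $W[\Gamma]$). Setting $\XX=\{\beta_1,\dots,\beta_p\}$, Van der Lek's identification of $A[\hat\Gamma_\XX]$ with a subgroup of $A[\hat\Gamma]=\KVA[\Gamma]$ gives $g=\delta_{\beta_1}^{\varepsilon_1}\cdots\delta_{\beta_p}^{\varepsilon_p}\in A[\hat\Gamma_\XX]$, which settles (i), and $\hat w=\delta_{\beta_1}^{\varepsilon_1}\cdots\delta_{\beta_p}^{\varepsilon_p}$ settles (iii).

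The remaining task is to compute $\hat M_\XX=(\hat m_{\beta,\gamma})_{\beta,\gamma\in\XX}$. For $\beta=\gamma$ we have $\hat m_{\beta,\beta}=1$, so fix distinct $\beta,\gamma\in\XX$ and put $c=\langle\beta,\gamma\rangle$, an algebraic number lying in the explicitly known real number field generated by the finitely many $\cos(\pi/m_{s,t})$ occurring in $\Gamma$. If $\hat m_{\beta,\gamma}$ is finite then, as in the argument showing $\hat m_{\beta,\gamma}$ is well defined, $c=-2\cos(\pi/\hat m_{\beta,\gamma})$ and $\hat m_{\beta,\gamma}$ lies in the finite set $\mathcal M_\Gamma=\{m_{s,t}\mid s,t\in S,\ s\neq t,\ m_{s,t}\neq\infty\}$; so if $c$ is not of the form $-2\cos(\pi/m)$ for some $m\in\mathcal M_\Gamma$ I would declare $\hat m_{\beta,\gamma}=\infty$, and otherwise let $m$ be that unique integer. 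It then remains to decide whether $\hat m_{\beta,\gamma}=m$, which by definition holds exactly when $(\beta,\gamma)$ lies in the diagonal $W[\Gamma]$-orbit of some pair of simple roots $(\alpha_s,\alpha_t)$ with $m_{s,t}=m$. Using $\beta=v_i(\alpha_{s_i})$ and applying $v_i^{-1}$, this reduces to deciding whether the explicit root $v_i^{-1}(\gamma)$ lies in the orbit of a simple root under the stabiliser of $\alpha_{s_i}$ in $W[\Gamma]$; that stabiliser is a reflection subgroup of $W[\Gamma]$, and the orbit-membership problem involved is effectively decidable --- trivially so when $\Gamma$ is of spherical type, since then $W[\Gamma]$ is finite, and in general by the effective machinery available for Coxeter groups acting on their root systems.

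The step I expect to be the main obstacle is precisely this last one, the effective determination of $\hat M_\XX$. The number $\langle\beta,\gamma\rangle$ by itself does \emph{not} determine $\hat m_{\beta,\gamma}$: the Coxeter graph $\hat\Gamma$ genuinely has fewer edges than the form values would suggest --- for instance in type $B_2$ two orthogonal roots $\beta,\gamma$ satisfy $\hat m_{\beta,\gamma}=\infty$, not $2$, because $B_2$ has no pair of simple roots at right angle --- so the form value has to be supplemented by information on the $W[\Gamma]$-orbits of $\beta$ and $\gamma$ and, if necessary, by a genuine diagonal-orbit test. I would handle this by first isolating and proving a clean lemma characterising $\hat m_{\beta,\gamma}$ in terms of $\langle\beta,\gamma\rangle$ together with the $W[\Gamma]$-orbits of $\beta$ and $\gamma$, and then checking that each ingredient of that characterisation is algorithmic; everything else is routine bookkeeping layered on top of the rewriting in the proof of Theorem~\ref{thm2_3}.
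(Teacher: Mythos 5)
Your treatment of (i) and (iii) is exactly the paper's: normalise the word, rewrite $g$ as $\delta_{\beta_1}^{\varepsilon_1}\cdots\delta_{\beta_p}^{\varepsilon_p}$ with $\beta_i=(w_0\cdots w_{i-1})(\alpha_{s_i})$ computed in the canonical representation over a number field, and take $\XX=\{\beta_1,\dots,\beta_p\}$; that part is correct. The genuine gap is in (ii), and you have located it yourself but not closed it. Deciding whether a given pair $(\beta,\gamma)$ lies in the diagonal $W[\Gamma]$-orbit of a pair of simple roots is the heart of the proposition, since $W[\Gamma]$ is in general infinite (the statement is for an arbitrary finite Coxeter graph, not only spherical or affine type), so orbits and stabilisers are infinite and ``orbit-membership is effectively decidable by the effective machinery available'' is an assertion, not an argument. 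The paper's solution consists of two specific ingredients you do not supply: first, a reduction (Claim 1 of the paper's proof, relying on Baumeister--Dyer--Stump--Wegener \cite{BaDyStWe1}) showing that, when $\langle\beta,\gamma\rangle\le 0$, the pair $(\beta,\gamma)$ is simultaneously conjugate to $(\alpha_s,\alpha_t)$ if and only if the single element $r_\beta r_\gamma$ is conjugate to $st$ in $W[\Gamma]$; second, Krammer's algorithm \cite{Kramm1} for deciding this conjugacy (after observing that $\langle\beta,\gamma\rangle\in\,]-2,0]$ forces $r_\beta r_\gamma$ to have finite order). Without something playing the role of these two steps your algorithm for $\hat M_\XX$ does not exist.

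Two further points in your sketch would also need repair even as a roadmap. The ``clean lemma characterising $\hat m_{\beta,\gamma}$ in terms of $\langle\beta,\gamma\rangle$ together with the $W[\Gamma]$-orbits of $\beta$ and $\gamma$'' is not available: as your own $B_2$ example shows, the correct invariant is a property of the \emph{pair} (conjugacy of $r_\beta r_\gamma$ to $st$), not of the two individual orbits plus the form value, and the paper never proves nor uses such an individual-orbit criterion. And the reduction to ``$v_i^{-1}(\gamma)$ lies in the orbit of a simple root under the stabiliser of $\alpha_{s_i}$'' is both imprecise (the witness $w'$ must carry some simple root $\alpha_s$, not necessarily $\alpha_{s_i}$, onto $\alpha_{s_i}$, so you would first need to solve another root-orbit problem to normalise) and rests on the claim that the stabiliser of a root is a reflection subgroup, which is false in general: by Brink--Howlett-type results the centraliser of a reflection is only a reflection subgroup extended by a (possibly nontrivial free) complement. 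So the overall architecture matches the paper, but part (ii), which is where all the actual content of Proposition \ref{prop5_2} lies, is not proved.
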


\begin{proof}
Let $M=(m_{s,t})_{s,t\in S}$ be the Coxeter matrix of $\Gamma$.
For $s,t\in S$ we set $a_{s,t}=-2\cos(\pi/m_{s,t})$ if $m_{s,t}\neq\infty$ and $a_{s,t}=-2$ if $m_{s,t}=\infty$.
The number $a_{s,t}$ is an algebraic number for all $s,t \in S$, hence we can consider a number field $\K$ containing all $a_{s,t}$ and make all linear calculations with $\K$ in place of $\R$.
This makes them algorithmically computable.
Take an abstract set $\Pi=\{\alpha_s\mid s\in S\}$ in one-to-one correspondence with $S$ and denote by $V$ the vector space over $\K$ having $\Pi$ as a basis.
Moreover, consider the symmetric bilinear form $\langle\cdot,\cdot\rangle:V\times V \to\K$ defined by $\langle\alpha_s,\alpha_t\rangle=a_{s,t}$ for all $s,t\in S$ and the embedding $W[\Gamma]\hookrightarrow\GL(V)$ defined by $s(v)= v-\langle v,\alpha_s\rangle\alpha_s$ for $s\in S$ and $v\in V$.
Set $\Phi[\Gamma]=\{w(\alpha_s)\mid s\in S\text{ and } w\in W[\Gamma]\}$.

In this context a root $\beta$ is given by a word $w=s_1\cdots s_\ell\in S^*$ and an element $s\in S$ such that $\bar w(\alpha_s) = \beta$, where $\bar w$ is the element of $W[\Gamma]$ represented by $w$.
The word problem in $W[\Gamma]$ is solvable (see Tits \cite{Tits1}) and the linear operations in $V$ can be down with algorithmic means, hence we can decide whether two pairs $(w,s)$ and $(w',s')$ in $S^*\times S$ represent the same root or not.

{\it Claim 1.}
Let $\beta,\gamma\in\Phi[\Gamma]$ such that $\langle\beta,\gamma\rangle\le0$ and let $s,t\in S$ such that $m_{s,t}\neq\infty$.
There exists $u\in W[\Gamma]$ such that $\{u(\alpha_s),u(\alpha_t)\}=\{\beta,\gamma\}$ if and only if $st$ and $r_\beta r_\gamma$ are conjugate in $W[\Gamma]$.

{\it Proof of Claim 1.}
Note that $ts$ and $st$ are conjugate, since $ts=s\,st\,s$, hence we can replace $st$ with $ts$ if necessary. 
Suppose first there exists $u\in W[\Gamma]$ such that $\{u(\alpha_s),u(\alpha_t)\}=\{\beta,\gamma\}$, say $u(\alpha_s)=\beta$ and $u(\alpha_t)=\gamma$.
Then $r_\beta = ur_{\alpha_s} u^{-1} = usu^{-1}$ and $r_\gamma=ur_{\alpha_t}u^{-1}=utu^{-1}$, hence $r_\beta r_\gamma = u\,st\,u^{-1}$.

Now, suppose that $r_\beta r_\gamma$ and $st$ are conjugate in $W[\Gamma]$.
Let $w\in W[\Gamma]$ such that $wr_\beta r_\gamma w^{-1} = st$.
Upon replacing $\beta$ with $w(\beta)$ and $\gamma$ with $w(\gamma)$, we can assume that $r_\beta r_\gamma = st$.
By Baumeister--Dyer--Stump--Wegener \cite[Theorem 1.4]{BaDyStWe1}, $r_\beta$ and $r_\gamma$ lie in the subgroup $W[\Gamma_{\{s,t\}}]$ of $W[\Gamma]$ generated by $\{s,t\}$, and they are conjugate in $W[\Gamma_{\{s,t\}}]$ to elements of $\{s,t\}$.
Since, furthermore, $r_\beta r_\gamma = st$, there exists $k\in\N$ such that $r_\beta=(st)^{k+1}s$ and $r_\gamma=(st)^ks$.
If $k$ is even, say $k=2\ell$, then, upon conjugating by $s(ts)^{\ell}$, we may assume that $r_\beta=t$ and $r_\gamma=s$.
If $k$ is odd, say $k=2\ell-1$, then, upon conjugating by $(ts)^\ell$, we may assume that $r_\beta=s$ and $r_\gamma=t$.
So, since we can swap $s$ and $t$, we may assume that $r_\beta=s$ and $r_\gamma=t$.
This means in terms of roots that $\beta \in\{\alpha_s,-\alpha_s\}$ and $\gamma \in\{\alpha_t,-\alpha_t\}$.
If $\langle\beta,\gamma\rangle=0$, then $\langle\alpha_s,\alpha_t\rangle=0$.
In this case $s(\alpha_s)=-\alpha_s$, $s(\alpha_t)=\alpha_t$, $t(\alpha_s)=\alpha_s$ and $t(\alpha_t)=-\alpha_t$.
By using these equalities we easily reduce to the case where $\beta=\alpha_s$ and $\gamma=\alpha_t$, which concludes the proof in this case.
Suppose $\langle \beta,\gamma\rangle<0$.
Then $m_{s,t}\ge 3$, $\langle\alpha_s,\alpha_t\rangle=\langle\beta,\gamma\rangle$, and we have two possibilities: either $\beta=\alpha_s$ and $\gamma=\alpha_t$, or $\beta=-\alpha_s$ and $\gamma=-\alpha_t$.
If $\beta=\alpha_s$ and $\gamma=\alpha_t$, then there is nothing to prove. 
So, we can assume $\beta=-\alpha_s$ and $\gamma=-\alpha_t$.
We know by Deodhar \cite{Deodh1} that:
\begin{gather*}
\Prod_R(s,t,m_{s,t})(\alpha_s)=-\alpha_s \text{ and }\Prod_R(s,t,m_{s,t})(\alpha_t)=-\alpha_t\text{ if }m_{s,t}\text{ is even}\,,\\
\Prod_R(s,t,m_{s,t})(\alpha_s)=-\alpha_t \text{ and }\Prod_R(s,t,m_{s,t})(\alpha_t)=-\alpha_s\text{ if }m_{s,t}\text{ is odd}\,,
\end{gather*}
hence, upon applying $\Prod_R(s,t,m_{s,t})$, we may assume that $\beta=\alpha_s$ and $\gamma=\alpha_t$, which finishes the proof of Claim 1.

{\it Claim 2.}
There exists an algorithm which, for given $\beta,\gamma\in\Phi[\Gamma]$, computes $\hat m_{\beta,\gamma}$.

{\it Proof of Claim 2.}
If $\langle\beta,\gamma\rangle\not\in]-2,0]$, then $\hat m_{\beta,\gamma}=\infty$.
Thus, we can assume $\langle\beta,\gamma\rangle\in]-2,0]$.
As shown in the beginning of the proof of Proposition 1.4.7 in Krammer \cite{Kramm1}, this implies that $r_\beta r_\gamma$ is of finite order.
Let $s,t\in S$ such that $m_{s,t}\neq\infty$.
Then the algorithm of Krammer \cite[Proposition 3.3.2]{Kramm1} determines whether $r_\beta r_\gamma$ and $st$ are conjugate. 
By Claim 1 this algorithm also decides whether there exists $u\in W[\Gamma]$ such that $\{u(\alpha_s),u(\alpha_t)\}=\{\beta,\gamma\}$. 
Then we compute $\hat m_{\beta,\gamma}$ by applying this algorithm to all the pairs $(s,t)\in S\times S$ such that $s\neq t$ and $m_{s,t}\neq\infty$.
This concludes the proof of Claim 2.

{\it End of the proof of Proposition \ref{prop5_2}.}
Let $g\in\KVA[\Gamma]$ given by a word over $(\SS\sqcup\TT)^{\pm 1}$.
From this word we can easily write $g$ in the form: 
\[
g=\iota_W(w_0)\sigma_{s_1}^{\varepsilon_1}\iota_W(w_1)\cdots\sigma_{s_p}^{\varepsilon_p}\iota_W(w_p)\,,
\]
where $w_0,w_1,\dots,w_p\in W[\Gamma]$, $s_1,\dots,s_p\in S$ and $\varepsilon_1,\dots,\varepsilon_p\in\{\pm 1\}$.
For $i=1,\dots,p$ set $\beta_i=(w_0\cdots w_{i-1})(\alpha_{s_i})$.
Set also $w=w_0w_1\cdots w_p$.
Then $g=\delta_{\beta_1}^{\varepsilon_1}\delta_{\beta_2}^{\varepsilon_2}\cdots\delta_{\beta_p}^{\varepsilon_p}\iota_W(w)$.
But $w=\pi_K(g)=1$, since $g\in\KVA[\Gamma]$, hence $g=\delta_{\beta_1}^{\varepsilon_1}\delta_{\beta_2}^{\varepsilon_2}\cdots\delta_{\beta_p}^{\varepsilon_p}$.
This proves Part (i) and Part (iii) if we set $\XX=\{\beta_1,\dots,\beta_p\}$.
Then Part (ii) immediately follows from Claim 2.
\end{proof}

\begin{proof}[Proof of Theorem \ref{thm5_1}]
Let $\Gamma$ be a Coxeter graph of spherical type or of affine type. 
Let $g\in\VA[\Gamma]$ given by a word over $(\SS\sqcup\TT)^{\pm 1}$.
We can calculate $\pi_K(g)\in W[\Gamma]$ and the word problem in $W[\Gamma]$ is solvable (see Tits \cite{Tits1}), hence we can decide whether $\pi_K(g)$ is trivial or not. 
If $\pi_K(g)\neq 1$, then $g\neq 1$ and the algorithm stops here. 
So, we can assume $\pi_K(g)=1$, that is, $g\in\KVA[\Gamma]$.
The algorithm of Proposition \ref{prop5_2} gives a finite subset $\XX\subset\Phi[\Gamma]$ and a word $\hat w$ over $\{\delta_\beta\mid\beta\in\XX\}^{\pm 1}$ such that $g\in A[\hat\Gamma_\XX]$ and $\hat w$ represents $g$.
It also gives the matrix $\hat M_\XX$ of $\hat\Gamma_\XX$.
We know by Godelle--Paris \cite{GodPar1} that, if, for each free of infinity subset $\YY\subset\XX$ the group $A[\hat\Gamma_\YY]$ has a solution to the word problem, then $A[\hat\Gamma_\XX]$ itself has a solution to the word problem. 
On the other hand, by Theorem \ref{thm4_1}, $A[\hat\Gamma_\YY]$ is of spherical type or of affine type for each free of infinity subset $\YY\subset\XX$, by Deligne \cite{Delig1} and Brieskorn--Saito \cite{BriSai1} the Artin groups of spherical type have a solvable word problem, and by McCammond--Sulway \cite{McCSul1} the Artin groups of affine type have a solvable word problem.
So, $A[\hat\Gamma_\XX]$ has a solution to the word problem.
This means that there is an algorithm which decides whether $g$ is trivial or not.
\end{proof}

%%%%%%%%%%

\section{$K(\pi,1)$ conjecture and cohomological dimension}\label{sec6}

We start by recalling the $K(\pi,1)$ conjecture for Artin groups. 
Let $\Gamma$ be a Coxeter graph and let $M=(m_{s,t})_{s,t\in S}$ be its Coxeter matrix.
Recall that $\Pi=\{\alpha_s\mid s\in S\}$ is the set of simple roots, that $V$ is the real vector space having $\Pi$ as a basis, and that $\langle\cdot,\cdot\rangle :V\times V\to\R$ is the canonical bilinear form. 
We have an embedding $W[\Gamma]\hookrightarrow\GL(V)$ defined by $s(v)=v-\langle v,\alpha_s\rangle\alpha_s$ for $s\in S$ and $v\in V$, and $W[\Gamma]$, viewed as a subgroup of $\GL(V)$, is generated by linear reflections.

Suppose $\Gamma$ is finite.
There exists a non-empty open cone $I\subset V$, called \emph{Tits cone}, on which $W[\Gamma]$ acts properly discontinuously.
The set of reflections in $W[\Gamma]$ is $R=\{wsw^{-1}\mid w\in W[\Gamma]\text{ and } s\in S\}$ and $W[\Gamma]$ acts freely and properly discontinuously on $I\setminus\cup_{r\in R} H_r$, where, for $r\in R$, $H_r$ denotes the hyperplane of $V$ fixed by $r$ (see Bourbaki \cite{Bourb1}).
Set:
\[
\MM[\Gamma]=(I\times I)\setminus\left(\bigcup_{r\in R} (H_r\times H_r)\right)\,.
\]
This is a connected manifold of dimension $2\,|S|$ on which $W[\Gamma]$ acts freely and properly discontinuously. 
A key result in the domain is the following.

\begin{thm}[Van der Lek \cite{Lek1}]\label{thm6_1}
Let $\Gamma$ be a finite Coxeter graph.
Then the fundamental group of $\MM[\Gamma]/W[\Gamma]$ is isomorphic to $A[\Gamma]$.
\end{thm}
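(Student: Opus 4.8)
The plan is to exhibit a CW-complex $Y$ homotopy equivalent to $\MM[\Gamma]/W[\Gamma]$ whose $2$-skeleton visibly carries Artin's presentation. First I would record the combinatorial geometry of the Tits cone: the reflection set $R=\{wsw^{-1}\mid w\in W[\Gamma],\,s\in S\}$, the fundamental chamber $C$ cut out inside $I$ by the walls $H_s$ ($s\in S$), the decomposition of the real complement $I\setminus\bigcup_{r\in R}H_r=\bigsqcup_{w\in W[\Gamma]}wC$ into chambers in bijection with $W[\Gamma]$, and --- crucially --- the fact that the arrangement $\{H_r\}$ is locally finite on $I$: each point has a neighbourhood meeting only the finitely many walls through the face containing it, and those form, up to linear change of coordinates, the arrangement of a finite parabolic subsystem $\Gamma_X$ with $X\in\PP_\sph(S)$. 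This local finiteness replaces Brieskorn's hypothesis that $W[\Gamma]$ be finite, and is why here one needs only $S$ finite.

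Next, $W[\Gamma]$-equivariantly, I would deformation retract $\MM[\Gamma]$ onto a regular CW-complex ${\rm Sal}[\Gamma]$ (a Salvetti-type complex): its cells are the pairs $(w,X)$ with $w\in W[\Gamma]$, $X\in\PP_\sph(S)$, the cell $(w,X)$ having dimension $|X|$, and $W[\Gamma]$ permuting cells freely by $v\cdot(w,X)=(vw,X)$. The retraction is assembled from local models: near the intersection of a finite family of strata $H_{r_i}\times H_{r_i}$ one uses the complexification of the corresponding spherical parabolic arrangement, for which the retraction onto the local Salvetti cells is classical, and these local retractions are patched by induction on the codimension of the stratum via a partition of unity; convexity of $I\times I$ gives the connectedness of $\MM[\Gamma]$ (already noted after the statement) and makes the global retract well defined. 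Passing to quotients, $\MM[\Gamma]/W[\Gamma]$ deformation retracts onto $Y={\rm Sal}[\Gamma]/W[\Gamma]$, which has a single $0$-cell, one $1$-cell $e_s$ for each $s\in S$ (the orbit of $(1,\{s\})$), and one $2$-cell $e_{\{s,t\}}$ for each $\{s,t\}\in\PP_\sph(S)$, i.e.\ for each pair with $m_{s,t}\neq\infty$; all higher cells are irrelevant for $\pi_1$.

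Then I would identify the attaching maps. The $1$-cell $e_s$ represents the ``meridian'' loop based in $C$ that moves into the imaginary directions, encircles $H_s$ once, passes to the adjacent chamber $sC$ and returns --- by construction the Artin generator attached to $s$. The attaching map of $e_{\{s,t\}}$ is, by the local finiteness above, governed by the rank-two situation: the complexified arrangement of $m_{s,t}$ lines through the origin in $\R^2$ modulo the dihedral group $W[I_2(m_{s,t})]$. A direct computation there shows $\pi_1$ of that local quotient is $\langle a,b\mid \Prod_R(b,a,m_{s,t})=\Prod_R(a,b,m_{s,t})\rangle$, so $e_{\{s,t\}}$ contributes exactly the relator $\Prod_R(t,s,m_{s,t})\,\Prod_R(s,t,m_{s,t})^{-1}$. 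Reading the standard presentation of $\pi_1$ off the $2$-skeleton of $Y$ then gives
\[
\pi_1(\MM[\Gamma]/W[\Gamma])=\langle\, S\mid \Prod_R(t,s,m_{s,t})=\Prod_R(s,t,m_{s,t}),\ s\neq t,\ m_{s,t}\neq\infty\,\rangle=A[\Gamma].
\]

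The main obstacle I anticipate is constructing the $W[\Gamma]$-equivariant deformation retraction onto ${\rm Sal}[\Gamma]$ when $W[\Gamma]$ is infinite: one must organise the patching of infinitely many local retractions so that it stays continuous and $W[\Gamma]$-equivariant, which rests on local finiteness of the arrangement and on proper discontinuity of the $W[\Gamma]$-action (both recalled before the statement). The rank-two dihedral computation is elementary but is the irreducible combinatorial core. An alternative that sidesteps some of the point-set topology --- and that makes injectivity of the map $A[\Gamma]\to\pi_1(\MM[\Gamma]/W[\Gamma])$ most transparent --- is to present the fundamental groupoid $\Pi_1\big(\MM[\Gamma]/W[\Gamma]\big)$ directly by galleries: generators are elementary paths crossing a single wall, relations come from the rank-$\le 2$ faces, and one recognises the resulting groupoid as the one attached to the action of $A[\Gamma]$ on $W[\Gamma]$.
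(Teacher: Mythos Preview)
The paper does not prove Theorem~\ref{thm6_1}; it is quoted as a result of Van der Lek \cite{Lek1} and used as a black box in setting up the $K(\pi,1)$ conjecture. So there is no proof in the paper to compare your proposal against.

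That said, your sketch is a legitimate route to the result, and it is worth noting how it sits relative to the literature. Your main line --- an equivariant deformation retraction of $\MM[\Gamma]$ onto a Salvetti-type CW-complex indexed by $W[\Gamma]\times\PP_\sph(S)$, followed by reading Artin's presentation off the $2$-skeleton of the quotient --- is not Van der Lek's original argument but rather the approach developed later by Salvetti (for finite $W$) and extended to arbitrary finite $\Gamma$ by Charney--Davis and Paris. Van der Lek's thesis argument is in fact closer to the ``alternative'' you mention at the end: a direct groupoid presentation of $\Pi_1(\MM[\Gamma]/W[\Gamma])$ via galleries and wall-crossings, with relations coming from rank-$\le 2$ faces. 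So you have the history slightly inverted: what you present as the main proof is the later, more geometric approach, and your alternative is the original.

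Your sketch is honest about its main gap, namely the construction of the global $W[\Gamma]$-equivariant retraction when $W[\Gamma]$ is infinite. This is genuinely the nontrivial step, and ``patched by induction on the codimension of the stratum via a partition of unity'' hides real work; one needs the retraction to be compatible across strata of different codimension, which is typically handled via an explicit combinatorial model (the oriented-system or sign-vector description of Salvetti cells) rather than a soft partition-of-unity argument. If you pursue this line you should either cite Paris or Charney--Davis for the retraction, or switch to the nerve/acyclic-cover formulation, which makes the equivariance automatic.
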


Recall that a CW-complex $X$ is a \emph{classifying space} for a (discrete) group $G$ if $\pi_1(X)=G$ and the universal covering of $X$ is contractible. 
These spaces play a major role in the computation of cohomology of groups (see Brown \cite{Brown1}).
The following is one of the main questions in the theory of Artin groups.

\begin{conj}[$K(\pi,1)$ Conjecture]\label{conj6_2}
Let $\Gamma$ be a finite Coxeter graph.
Then $\MM[\Gamma]/W[\Gamma]$ is a classifying space for $A[\Gamma]$.
\end{conj}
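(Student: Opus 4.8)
The plan is to deduce the conjecture from the contractibility of a single auxiliary space. By Theorem~\ref{thm6_1} the fundamental group of $\MM[\Gamma]/W[\Gamma]$ is already $A[\Gamma]$, so it remains only to show that this orbit space is aspherical. Since $W[\Gamma]$ acts freely and properly discontinuously on $\MM[\Gamma]$, the quotient map $\MM[\Gamma]\to\MM[\Gamma]/W[\Gamma]$ is a covering; hence the two spaces share a universal cover and have the same homotopy groups in degrees $\ge 2$, and the conjecture is equivalent to the assertion that $\MM[\Gamma]$ itself is aspherical. When $\Gamma$ is of spherical type the Tits cone is all of $V$, $\MM[\Gamma]$ is the complement of the complexified reflection arrangement, and asphericity is Deligne's theorem for simplicial arrangements \cite{Delig1}; so the content of the conjecture lies in the case where $W[\Gamma]$ is infinite and the arrangement $\{H_r\times H_r\}$ is infinite, and the first task is to replace this analytic complement by a finite combinatorial model.

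To that end I would pass to the Salvetti complex $\mathrm{Sal}[\Gamma]$, the regular CW-complex built $W[\Gamma]$-equivariantly from the poset of faces of the Coxeter complex, which is $W[\Gamma]$-homotopy equivalent to $\MM[\Gamma]$; its quotient $\mathrm{Sal}[\Gamma]/W[\Gamma]$ is then a finite complex homotopy equivalent to $\MM[\Gamma]/W[\Gamma]$, reducing asphericity of $\MM[\Gamma]$ to asphericity of $\mathrm{Sal}[\Gamma]$, a purely combinatorial statement. Following Charney--Davis \cite{ChaDav1}, I would model the universal cover of $\mathrm{Sal}[\Gamma]/W[\Gamma]$ by the Deligne complex $D[\Gamma]$, the order complex of the poset of cosets $wA[\Gamma_X]$ with $w\in A[\Gamma]$ and $X\in\PP_\sph(S)$, ordered by inclusion. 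One shows that $D[\Gamma]$ is homotopy equivalent to this universal cover, so the conjecture becomes exactly the contractibility of $D[\Gamma]$. As a sanity check the spherical case reappears: then $S\in\PP_\sph(S)$ supplies a maximum of the poset, $D[\Gamma]$ is a cone, and contractibility is immediate.

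To prove contractibility of $D[\Gamma]$ in general I would equip it with the piecewise-Euclidean Moussong metric, whose closed cells are the Coxeter polytopes attached to the spherical parabolics $A[\Gamma_X]$, $X\in\PP_\sph(S)$, and try to verify that this metric is nonpositively curved. By Gromov's link criterion this local condition is equivalent to the link of every vertex of $D[\Gamma]$ being CAT(1); since $D[\Gamma]$ is simply connected and the metric is complete, the Cartan--Hadamard theorem would then promote nonpositive curvature to the global CAT(0) inequality, and a CAT(0) space is contractible, closing the argument. I would attack the CAT(1) link condition by exploiting the lattice structure of $\PP_\sph(S)$ to split each vertex link as a join indexed by the irreducible spherical factors, thereby reducing the problem to the rank-two links, where the relevant angles can be read off directly from the labels $m_{s,t}$.

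The CAT(1) verification for the vertex links is the step I expect to be the main obstacle, and it is where any uniform argument must concentrate its effort. For Coxeter groups the analogous links in the Davis complex satisfy Moussong's metric-flag condition, so the Davis complex is CAT(0) in every case; the added difficulty for Artin groups is that the cells of $D[\Gamma]$ are governed by the interval geometry of the spherical parabolics $A[\Gamma_X]$ rather than by finite reflection polytopes, and the resulting links need not be metric flag. My plan to overcome this is to bypass the metric-flag test and instead estimate directly the lengths of the essential closed geodesics in each link, computing them in the rank-two parabolics $A[\Gamma_{\{s,t\}}]$ from their Garside normal forms and propagating the bounds through the join decomposition; establishing a uniform lower bound of $2\pi$ on these lengths, uniformly in the labels $m_{s,t}$, is the crux on which the whole argument rests, and it is exactly the point where the spherical case (handled by Deligne \cite{Delig1}) and the affine case (handled by the dual Garside geometry of Paolini--Salvetti \cite{PaoSal1}) furnish extra structure that the general case must reproduce.
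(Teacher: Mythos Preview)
The statement you are attempting to prove is Conjecture~\ref{conj6_2}, and the paper does not contain a proof of it: it is stated explicitly as an open conjecture. The paper only records that the conjecture is known in the spherical case by Deligne \cite{Delig1} and in the affine case by Paolini--Salvetti \cite{PaoSal1}, and then uses those two settled cases, together with the Ellis--Sk\"oldberg reduction, to prove Proposition~\ref{prop6_4} and Theorem~\ref{thm6_3} for the specific Artin group $A[\hat\Gamma]$. There is therefore nothing in the paper to compare your proposal against.

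As for the proposal itself, what you have written is not a proof but an outline of the Charney--Davis strategy from \cite{ChaDav1}. You correctly identify that asphericity of $\MM[\Gamma]/W[\Gamma]$ is equivalent to contractibility of the Deligne complex $D[\Gamma]$, and that one natural route is to show $D[\Gamma]$ is CAT(0) in the Moussong metric. But the step you flag as ``the main obstacle''---verifying that the vertex links are CAT(1)---is precisely the content of the conjecture in this formulation, and it is open in general. Your suggestion to bound essential geodesics in the links via Garside normal forms in rank-two parabolics does not succeed: the links at vertices of $D[\Gamma]$ corresponding to maximal spherical cosets are spherical Deligne complexes of the spherical parabolics, and proving these are CAT(1) is itself equivalent (by Charney--Davis) to the conjecture for spherical Artin groups of lower rank plus additional diameter estimates that are not known. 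The join decomposition reduces to irreducible factors, not to rank two. So the proposal has a genuine gap at exactly the point you yourself identify, and that gap is the entire conjecture.
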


We say that an Artin group $A[\Gamma]$ with $\Gamma$ finite \emph{satisfies  the $K(\pi,1)$ conjecture} if $\MM[\Gamma]/W[\Gamma]$ is a classifying space for $A[\Gamma]$.
So, the above conjecture says that any Artin group $A[\Gamma]$ with $\Gamma$ finite satisfies the $K(\pi,1)$ conjecture.
Note also that, by Deligne \cite{Delig1} and Paolini--Salvetti \cite{PaoSal1}, $A[\Gamma]$ satisfies the $K(\pi,1)$ conjecture if $\Gamma$ is of spherical type or of affine type.

The following makes sense only when $\Gamma$ is of spherical type since this is the only case where $\hat\Gamma$ is finite. 

\begin{thm}\label{thm6_3}
Let $\Gamma$ be a Coxeter graph of spherical type.
Then $A[\hat\Gamma]$ satisfies the $K(\pi,1)$ conjecture.
\end{thm}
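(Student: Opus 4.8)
The plan is to reduce the $K(\pi,1)$ property of $A[\hat\Gamma]$ to known cases by exploiting the structure of $\hat\Gamma$ established earlier in the paper. Since $\Gamma$ is of spherical type, $W[\Gamma]$ is finite, hence $\Phi[\Gamma]$ is finite and $\hat\Gamma$ is a finite Coxeter graph, so the statement makes sense. The key input is Theorem~\ref{thm4_1}: every free of infinity full subgraph $\hat\Gamma_\XX$ of $\hat\Gamma$ is of spherical type or of affine type, and therefore by Deligne~\cite{Delig1} and Paolini--Salvetti~\cite{PaoSal1} each such $A[\hat\Gamma_\XX]$ satisfies the $K(\pi,1)$ conjecture. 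The task is then to \emph{glue} these local $K(\pi,1)$ statements into a global one for $A[\hat\Gamma]$, where arbitrarily many $\hat m_{\beta,\gamma}$ equal $\infty$.

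First I would recall the combinatorial/topological machinery that performs exactly this kind of gluing: the Salvetti complex $\mathrm{Sal}(\hat\Gamma)$ (equivalently, the Charney--Davis model, or the complex $\MM[\hat\Gamma]/W[\hat\Gamma]$ of Section~\ref{sec6}) has a natural decomposition indexed by the simplices of the Coxeter complex, and its strata corresponding to free of infinity (in fact spherical) subsets $\XX$ are copies of $\mathrm{Sal}(\hat\Gamma_\XX)$. The relevant principle — due to Godelle--Paris~\cite{GodPar1}, and already invoked in the proof of Theorem~\ref{thm5_1} for the word problem — says that if $A[\hat\Gamma_\YY]$ satisfies the $K(\pi,1)$ conjecture for every free of infinity $\YY\subseteq\Phi[\Gamma]$, then $A[\hat\Gamma]$ satisfies it as well. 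Concretely, Godelle--Paris prove that $A[\hat\Gamma]$ is the fundamental group of a complex built by gluing the classifying spaces of its parabolic subgroups $A[\hat\Gamma_\YY]$ with $\YY$ free of infinity along a suitable developable complex of groups, and that under the stated hypothesis this glued complex is aspherical. So the proof reduces to verifying the hypothesis of that principle.

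Thus the second step is purely a citation chase combined with Theorem~\ref{thm4_1}: let $\YY\subseteq\Phi[\Gamma]$ be free of infinity; by Theorem~\ref{thm4_1}, $\hat\Gamma_\YY$ is of spherical type or of affine type and $n_\sph(A[\hat\Gamma_\YY])\le n_\sph(A[\Gamma])$; by Deligne~\cite{Delig1} (spherical case) and Paolini--Salvetti~\cite{PaoSal1} (affine case), $A[\hat\Gamma_\YY]$ satisfies the $K(\pi,1)$ conjecture. Feeding this into the Godelle--Paris gluing result yields that $A[\hat\Gamma]$ satisfies the $K(\pi,1)$ conjecture, which is the assertion of Theorem~\ref{thm6_3}.

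\textbf{Main obstacle.} The real work is not in the present section but in correctly quoting and applying the Godelle--Paris result, since one must check it applies to $\hat\Gamma$ even though $\hat\Gamma$ may be large and has many $\infty$'s: the principle is stated for general (not necessarily spherical) Coxeter graphs, and the decomposition of $\Phi[\Gamma]$ as a union of the sets $\XX^{+}_\beta,\XX^{-}_\beta$ used in Section~\ref{sec3} (giving amalgamated-product decompositions of $A[\hat\Gamma]$ along $\beta,-\beta$ with $\hat m_{\beta,-\beta}=\infty$) is precisely the combinatorial shape that makes the gluing argument go through. A subtlety worth a sentence is that one should confirm the finiteness of $\hat\Gamma$ (needed so that ``$K(\pi,1)$ conjecture'' in the sense of Conjecture~\ref{conj6_2} is even defined) — which is exactly the ``$\Gamma$ of spherical type'' hypothesis, as noted before the statement. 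Beyond that the argument is a bookkeeping exercise assembling Theorem~\ref{thm4_1}, \cite{Delig1}, \cite{PaoSal1} and \cite{GodPar1}.
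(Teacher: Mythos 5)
Your proposal follows essentially the same route as the paper: the paper proves Theorem~\ref{thm6_3} by combining Theorem~\ref{thm4_1} with Proposition~\ref{prop6_4}, which in turn rests on Deligne \cite{Delig1}, Paolini--Salvetti \cite{PaoSal1}, and the gluing principle that asphericity for all free of infinity parabolic subgroups implies asphericity for the whole Artin group. The only minor discrepancy is the reference for that gluing principle, which the paper attributes to Ellis--Sk\"oldberg \cite{EllSko1} (see also Godelle--Paris \cite{GodPar2}) rather than to \cite{GodPar1}, but this does not affect the correctness of your argument.
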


Theorem \ref{thm6_3} is a direct consequence of the following proposition combined with Theorem \ref{thm4_1}.

\begin{prop}\label{prop6_4}
Let $\Gamma$ be a finite Coxeter graph and let $M=(m_{s,t})_{s,t\in S}$ be its Coxeter matrix.
Suppose that, for each free of infinity subset $X\subset S$, the Coxeter graph $\Gamma_X$ is of spherical type or of affine type.
Then $A[\Gamma]$ satisfies the $K(\pi,1)$ conjecture.
\end{prop}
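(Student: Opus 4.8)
The plan is to use the $K(\pi,1)$ criterion in terms of free of infinity parabolic subgroups, which is exactly the kind of statement for which the general principle of Godelle--Paris \cite{GodPar1} is designed. More precisely, I would invoke the result of Ellis--Sk\"oldberg \cite{EllSko1} (or the equivalent statement in Godelle--Paris \cite{GodPar1}, Charney--Davis \cite{ChaDav1}) which says that if $A[\Gamma_Y]$ satisfies the $K(\pi,1)$ conjecture for every free of infinity subset $Y\subset S$, then $A[\Gamma]$ satisfies the $K(\pi,1)$ conjecture. So the task reduces to checking the hypothesis: for each free of infinity $Y\subset S$, the group $A[\Gamma_Y]$ satisfies the $K(\pi,1)$ conjecture.

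Now fix a free of infinity subset $Y\subset S$. By assumption $\Gamma_Y=(\Gamma_X)$ with $X=Y$ is of spherical type or of affine type (the hypothesis of the proposition applies directly to $X=Y$, since $Y$ is free of infinity). By Deligne \cite{Delig1} every Artin group of spherical type satisfies the $K(\pi,1)$ conjecture, and by Paolini--Salvetti \cite{PaoSal1} every Artin group of affine type satisfies the $K(\pi,1)$ conjecture. Hence $A[\Gamma_Y]$ satisfies the $K(\pi,1)$ conjecture in either case. Since this holds for all free of infinity $Y\subset S$, the criterion yields that $A[\Gamma]$ itself satisfies the $K(\pi,1)$ conjecture.

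The one point that requires a little care is making sure the criterion being invoked is stated and applicable in the generality needed: it must allow the "pieces" to be of affine (not only spherical) type, and it must be valid for finite $\Gamma$ with arbitrary labels. This is precisely the content of the reduction result in the literature on the $K(\pi,1)$ conjecture (and is the same mechanism used for the word problem in the proof of Theorem \ref{thm5_1}), so the main work is simply citing it correctly; there is no genuinely new computation. I expect the only real obstacle to be locating and quoting the sharpest available form of this reduction principle, rather than anything in the argument itself.
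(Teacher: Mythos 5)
Your proposal is correct and follows essentially the same route as the paper: reduce via the Ellis--Sk\"oldberg criterion (the paper cites \cite{EllSko1}, see also \cite{GodPar2}) to the free of infinity subsets $X\subset S$, then apply Deligne \cite{Delig1} in the spherical case and Paolini--Salvetti \cite{PaoSal1} in the affine case. The only discrepancy is bibliographic: the relevant companion reference is \cite{GodPar2} rather than \cite{GodPar1}, and Charney--Davis \cite{ChaDav1} is not the source of the reduction criterion.
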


\begin{proof}
By Deligne \cite{Delig1}, if $\Gamma$ is of spherical type, then $A[\Gamma]$ satisfies the $K(\pi,1)$ conjecture, and by Paolini--Salvetti \cite{PaoSal1}, if $\Gamma$ is of affine type, then $A[\Gamma]$ satisfies the $K(\pi,1)$ conjecture.
On the other hand, by Ellis--Sk\"oldberg \cite{EllSko1} (see also Godelle--Paris \cite{GodPar2}), if $A[\Gamma_X]$ satisfies the $K(\pi,1)$ conjecture for every free of infinity subset $X\subset S$, then $A[\Gamma]$ satisfies the $K(\pi,1)$ conjecture.
So, if for each free of infinity subset $X\subset S$ the Coxeter graph $\Gamma_X$ is of spherical type or of affine type, then $A[\Gamma]$ satisfies the $K(\pi,1)$ conjecture.
\end{proof}

\begin{proof}[Proof of Theorem \ref{thm6_3}]
It is a straightforward consequence of Proposition \ref{prop6_4} and Theorem \ref{thm4_1}.
\end{proof}

For a group $G$ we denote by $\cd(G)$ the cohomological dimension of $G$ and by $\vcd(G)$ the virtual cohomological dimension of $G$.

\begin{corl}\label{corl6_5}
Let $\Gamma$ be a Coxeter graph of spherical type and let $n$ be the number of vertices of $\Gamma$.
Then $\cd(\KVA[\Gamma])=\vcd(\VA[\Gamma])=n$.
In particular, $\KVA[\Gamma]$ is torsion free and $\VA[\Gamma]$ is virtually torsion free.
\end{corl}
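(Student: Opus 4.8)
The plan is to extract the value of $\cd(\KVA[\Gamma])$ from Theorem \ref{thm6_3} together with the Charney--Davis computation of cohomological dimension, and then to transfer the conclusion to $\VA[\Gamma]$ using the finite-index inclusion $\KVA[\Gamma]\hookrightarrow\VA[\Gamma]$.

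First I would pin down $n_\sph(A[\hat\Gamma])$. As in the proof of Corollary \ref{corl2_4}, the map $s\mapsto\alpha_s$ identifies $\hat\Gamma_\Pi$ with $\Gamma$, where $\Pi=\{\alpha_s\mid s\in S\}$; since $\Gamma$ is of spherical type this exhibits a full subgraph of $\hat\Gamma$ of spherical type with $n$ vertices, whence $n_\sph(A[\hat\Gamma])\ge n_\sph(A[\hat\Gamma_\Pi])=n_\sph(A[\Gamma])=n$. For the reverse inequality I would apply Theorem \ref{thm4_1}: every free of infinity subset $\XX\subset\Phi[\Gamma]$ is finite and satisfies $n_\sph(A[\hat\Gamma_\XX])\le n_\sph(A[\Gamma])=n$, so from $n_\sph(A[\hat\Gamma])=\max\{n_\sph(A[\hat\Gamma_\XX])\mid\XX\in\PP_{<\infty}(\Phi[\Gamma])\}$ we get $n_\sph(A[\hat\Gamma])\le n$. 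Hence $n_\sph(A[\hat\Gamma])=n$.

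Next, since $W[\Gamma]$ is finite, $\hat\Gamma$ is finite, so Theorem \ref{thm6_3} applies and $A[\hat\Gamma]=\KVA[\Gamma]$ satisfies the $K(\pi,1)$ conjecture; by Charney--Davis \cite{ChaDav1} this gives $\cd(\KVA[\Gamma])=n_\sph(A[\hat\Gamma])=n$. In particular $\cd(\KVA[\Gamma])$ is finite, so $\KVA[\Gamma]$ is torsion free. Finally, by Proposition \ref{prop2_1} we have $\VA[\Gamma]=\KVA[\Gamma]\rtimes W[\Gamma]$ with $W[\Gamma]$ finite, so $\KVA[\Gamma]$ is a torsion-free subgroup of finite index in $\VA[\Gamma]$; by the standard properties of virtual cohomological dimension (see Brown \cite{Brown1}) it follows that $\VA[\Gamma]$ is virtually torsion free and $\vcd(\VA[\Gamma])=\cd(\KVA[\Gamma])=n$.

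I do not anticipate a genuine obstacle here, the corollary being essentially a bookkeeping consequence of Theorems \ref{thm4_1} and \ref{thm6_3}. The only step needing a little attention is the lower bound $n_\sph(A[\hat\Gamma])\ge n$, which requires producing an explicit spherical-type full subgraph of $\hat\Gamma$ on $n$ vertices; this is exactly $\hat\Gamma_\Pi$, already identified with $\Gamma$ in the proof of Corollary \ref{corl2_4}.
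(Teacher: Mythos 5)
Your argument is correct, and its overall skeleton matches the paper's: Theorem \ref{thm6_3} plus the Charney--Davis equality gives $\cd(\KVA[\Gamma])=n_\sph(A[\hat\Gamma])$, Theorem \ref{thm4_1} gives the upper bound by $n$, and the semi-direct product $\VA[\Gamma]=\KVA[\Gamma]\rtimes W[\Gamma]$ with $W[\Gamma]$ finite transfers everything to $\vcd(\VA[\Gamma])$ and the torsion statements. The one place where you genuinely diverge is the lower bound. The paper argues group-theoretically: $A[\Gamma]$ embeds in $\KVA[\Gamma]$ by Corollary \ref{corl2_4}, $\cd(A[\Gamma])=n$ by Deligne, and monotonicity of cohomological dimension under subgroups yields $n\le\cd(\KVA[\Gamma])$. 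You instead argue combinatorially at the level of the graph $\hat\Gamma$: the identification $\hat\Gamma_\Pi\cong\Gamma$ (the same observation that underlies the proof of Corollary \ref{corl2_4}) exhibits a spherical full subgraph on $n$ vertices, so $n_\sph(A[\hat\Gamma])\ge n$, and the Charney--Davis equality then converts this into the bound on $\cd(\KVA[\Gamma])$. Both routes are valid; yours avoids invoking Deligne's computation of $\cd(A[\Gamma])$ and the subgroup-monotonicity of $\cd$ as separate ingredients, staying entirely within the $n_\sph$ bookkeeping already set up in Section \ref{sec4}, while the paper's version is slightly shorter because it reuses the embedding $\iota_A$ directly and does not need to evaluate $n_\sph(A[\hat\Gamma])$ exactly. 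Your remaining steps (finiteness of $\hat\Gamma$ from $W[\Gamma]$ finite, torsion freeness from finite cohomological dimension, and the finite-index argument for $\vcd$) coincide with the paper's.
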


\begin{proof}
We have $n=n_\sph(A[\Gamma])$ by definition.
By Theorem \ref{thm6_3}, $A[\hat\Gamma]$ satisfies the $K(\pi,1)$ conjecture, hence, by Charney--Davis \cite{ChaDav1}, $\cd(\KVA[\Gamma])=\cd(A[\hat\Gamma])=n_\sph(A[\hat\Gamma])$.
By Theorem \ref{thm4_1} it follows that $\cd(\KVA[\Gamma])\le n$.
On the other hand, $A[\Gamma]$ is a subgroup of $\KVA[\Gamma]$ (see Corollary \ref{corl2_4}) and, by Deligne \cite{Delig1}, $\cd(A[\Gamma])=n$, hence $n\le \cd(\KVA[\Gamma])$, and therefore $\cd(\KVA[\Gamma])=n$.
Finally, since $\VA[\Gamma]=\KVA[\Gamma]\rtimes W[\Gamma]$ and $W[\Gamma]$ is finite, $\vcd(\VA[\Gamma])=\cd(\KVA[\Gamma])=n$.
\end{proof}

\begin{rem}
It follows from Bartholdi--Enriquez--Etingof--Rains \cite{BaEnEtRa1} that the cohomological dimension of $\PVB_n$ is $n-1$, which implies also that $\PVB_n$ is torsion free. 
We think that this extends to all virtual Artin groups of spherical type in the sense that, for a Coxeter graph $\Gamma$ of spherical type, we think that the cohomological dimension of $\PVA[\Gamma]$ is equal to the number of vertices of $\Gamma$, and therefore that $\PVA[\Gamma]$ is torsion free as well. 
Note that, since in this case $\PVA[\Gamma]$ is of finite index in $\VA[\Gamma]$ and by Corollary \ref{corl6_5} the virtual cohomological dimension of $\VA[\Gamma]$ is equal to the number of vertices of $\Gamma$, by a theorem of Serre \cite{Serre1} it would suffice to prove that $\PVA[\Gamma]$ is torsion free.
\end{rem}

We are not able to give precise values for the cohomological dimension of $\KVA[\Gamma]$ and for the virtual cohomological dimension of $\VA[\Gamma]$ when $\Gamma$ is of affine type, but we can give bounds:

\begin{thm}\label{thm6_6}
Let $\Gamma$ be a Coxeter graph of affine type. 
\begin{itemize}
\item[(1)]
$\KVA[\Gamma]$ is torsion free and $\cd(\KVA([\Gamma]) \le n_\sph(A[\Gamma])+1$.
\item[(2)]
$\VA[\Gamma]$ is virtually torsion free and $\vcd(\VA[\Gamma])\le 2\,n_\sph(A[\Gamma])+1$.
\end{itemize}
\end{thm}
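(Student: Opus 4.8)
The plan is to reduce both parts to a single upper bound, $\cd(\KVA[\Gamma])=\cd(A[\hat\Gamma])\le n_\sph(A[\Gamma])+1$. Once this is established, $\KVA[\Gamma]$ is torsion free because any group of finite cohomological dimension is torsion free, and Part~(2) follows from the decomposition $\VA[\Gamma]=\KVA[\Gamma]\rtimes W[\Gamma]$ together with the fact that an affine Coxeter group is virtually free abelian. Since $\Gamma$ is of affine type, $\hat\Gamma$ is in general infinite, but, as recalled at the beginning of Section~\ref{sec2} (Van der Lek's lemma), $A[\hat\Gamma]$ is the directed union $\varinjlim_{Y\in\PP_\fin(\Phi[\Gamma])}A[\hat\Gamma_Y]$ with injective connecting maps. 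I would first bound $\cd(A[\hat\Gamma_Y])$ for every finite $Y\subset\Phi[\Gamma]$, and then pass to the limit.

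For the first step, fix a finite $Y\subset\Phi[\Gamma]$. Every free of infinity subset $\XX$ of $Y$ is also a free of infinity subset of $\Phi[\Gamma]$, so Theorem~\ref{thm4_1} applies and $\hat\Gamma_\XX$ is of spherical type or of affine type. Hence the finite Coxeter graph $\hat\Gamma_Y$ satisfies the hypothesis of Proposition~\ref{prop6_4}, so $A[\hat\Gamma_Y]$ satisfies the $K(\pi,1)$ conjecture, whence $\cd(A[\hat\Gamma_Y])=n_\sph(A[\hat\Gamma_Y])$ by Charney--Davis \cite{ChaDav1}. Since $n_\sph(A[\hat\Gamma_Y])$ is the maximum of the integers $n_\sph(A[\hat\Gamma_\XX])$ over free of infinity $\XX\subset Y$, and each of these is at most $n_\sph(A[\Gamma])$ by Theorem~\ref{thm4_1}, we get $\cd(A[\hat\Gamma_Y])\le n_\sph(A[\Gamma])$ for every finite $Y$.

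For the second step, $\Gamma$ being of affine type is in particular finite, so $W[\Gamma]$, hence $\Phi[\Gamma]$, is countable; thus $\PP_\fin(\Phi[\Gamma])$ contains a cofinal chain $Y_1\subset Y_2\subset\cdots$ with $A[\hat\Gamma]=\bigcup_nA[\hat\Gamma_{Y_n}]$. For a countable directed union of groups, cohomology with arbitrary coefficients fits into a Milnor-type short exact sequence involving $\varprojlim$ and $\varprojlim^1$ of the cohomologies of the members; in particular the cohomological dimension of such a union exceeds the supremum of the cohomological dimensions of its members by at most one (the ``$+1$'' being unavoidable by this method, as $\mathbb Q=\varinjlim\Z$ shows). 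Together with the first step this gives $\cd(A[\hat\Gamma])\le n_\sph(A[\Gamma])+1$, which proves Part~(1). For Part~(2), $W[\Gamma]$ is a direct product of finite Coxeter groups (from its spherical components) and affine Weyl groups (from its affine components), hence contains a finite index free abelian subgroup $W_0$, namely the product of the translation lattices of its affine components; the description of $n_\sph$ for graphs of affine type gives $\cd(W_0)=\vcd(W[\Gamma])\le n_\sph(A[\Gamma])$. Setting $G_0=\pi_K^{-1}(W_0)$, a subgroup of finite index in $\VA[\Gamma]$, we obtain an extension $1\to\KVA[\Gamma]\to G_0\to W_0\to1$ in which both $W_0$ and $\KVA[\Gamma]$ are torsion free; hence $G_0$ is torsion free, so $\VA[\Gamma]$ is virtually torsion free and $\vcd(\VA[\Gamma])=\cd(G_0)$. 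The Lyndon--Hochschild--Serre spectral sequence of this extension then gives $\cd(G_0)\le\cd(\KVA[\Gamma])+\cd(W_0)\le\bigl(n_\sph(A[\Gamma])+1\bigr)+n_\sph(A[\Gamma])=2\,n_\sph(A[\Gamma])+1$.

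The main obstacle lies at the interface between the finite subgraphs and $\hat\Gamma$ itself. Controlling the cohomology of a countable directed union up to the unavoidable ``$+1$'' is standard; the subtler point is that the bound $\cd(A[\hat\Gamma_Y])\le n_\sph(A[\Gamma])$ has to hold for \emph{every} finite $Y\subset\Phi[\Gamma]$, not merely for the free of infinity ones, and this is precisely what Theorem~\ref{thm4_1} secures, by forcing the finite graph $\hat\Gamma_Y$ to fall under Proposition~\ref{prop6_4}. Verifying the auxiliary inequality $\vcd(W[\Gamma])\le n_\sph(A[\Gamma])$ is then a routine pass through the classification of connected Coxeter graphs of affine type.
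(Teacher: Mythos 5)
Your proposal is correct and follows essentially the same route as the paper: bound $\cd(A[\hat\Gamma_\XX])\le n_\sph(A[\Gamma])$ for finite $\XX\subset\Phi[\Gamma]$ via Theorem \ref{thm4_1}, Proposition \ref{prop6_4} and Charney--Davis, pass to the countable ascending union with the unavoidable ``$+1$'', and then treat $\VA[\Gamma]$ through the finite index subgroup $\pi_K^{-1}$ of the translation lattice of $W[\Gamma]$, bounding its dimension by subadditivity for the extension by $\KVA[\Gamma]$. The only cosmetic differences are that you re-derive the direct-limit bound via a Milnor $\varprojlim^1$ argument where the paper cites Bieri's Theorem 4.7, and you invoke the Lyndon--Hochschild--Serre spectral sequence where the paper cites Brown's Proposition 2.4.
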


\begin{proof}
Let $\XX$ be a finite subset of $\Phi[\Gamma]$.
We denote by $\PP_{<\infty}(\XX)$ the set of free of infinity subsets of $\XX$.
Recall that $n_\sph(A[\hat\Gamma_\XX])=\max\{n_\sph(A[\hat\Gamma_\YY])\mid\YY\in\PP_{<\infty}(\XX)\}$ and, by Theorem \ref{thm4_1}, $n_\sph(A[\hat\Gamma_\YY])\le n_\sph(A[\Gamma])$ for all $\YY\in\PP_{<\infty}(\XX)$, hence $n_\sph(A[\hat\Gamma_\XX])\le n_\sph(A[\Gamma])$.
Furthermore, by Proposition \ref{prop6_4} and Theorem \ref{thm4_1}, $A[\hat\Gamma_\XX]$ satisfies the $K(\pi,1)$ conjecture, hence, by Charney--Davis \cite{ChaDav1}, $\cd(A[\hat\Gamma_\XX])=n_\sph(A[\hat\Gamma_\XX])\le n_\sph(A[\Gamma])$.
Recall that $\PP_\fin(\Phi[\Gamma])$ denotes the set of finite subsets of $\Phi[\Gamma]$.
Since $\KVA[\Gamma]=A[\hat\Gamma]=\varinjlim_{\XX\in\PP_\fin(\Phi[\Gamma])} A[\hat\Gamma_\XX]$ and $\cd(A[\hat\Gamma_\XX]) \le n_\sph(A[\Gamma])$ for all $\XX\in\PP_\fin(\Phi[\Gamma])$, by Bieri \cite[Theorem 4.7]{Bieri1} we conclude that $\cd(\KVA[\Gamma])\le n_\sph(A[\Gamma])+1$.
This also implies that $\KVA[\Gamma]$ is torsion free.

We know by Bourbaki \cite[Chapitre VI]{Bourb1} that $W[\Gamma]$ admits a semi-direct product decomposition $W[\Gamma]=\Z^m\rtimes W_0$ where $m\le n_\sph(A[\Gamma])$ and $W_0$ is a finite Coxeter group.
Let $H=\pi_K^{-1}(\Z^m)$.
Then $H$ is a finite index subgroup of $\VA[\Gamma]$ which admits a semi-direct product decomposition $H=\KVA[\Gamma]\rtimes\Z^m$.
By Brown \cite[Proposition 2.4]{Brown1} it follows that:
\[
\cd(H)\le 
\cd(\KVA[\Gamma])+\cd(\Z^m) \le
n_\sph(A[\Gamma])+1+m\le
2\,n_\sph(A[\Gamma])+1\,,
\]
hence $\VA[\Gamma]$ is virtually torsion free and $\vcd(\VA[\Gamma])=\cd(H)\le 2\,n_\sph(A[\Gamma])+1$.
\end{proof}

%%%%%%%%%%

%%%%%%%%%%

\end{document}